\newcounter{cs}
\newcounter{ds}
\newcommand{\casos}{\begin{itemize}}
\newcommand{\fcasos}{\end{itemize}\setcounter{cs}{1}}
\newcommand{\Cs}{{$C^*$-algebra}}
\newcommand{\cK}{\mathcal{K}}
\newcommand{\ep}{\varepsilon}
\newcommand{\R}{\mathbb{R}}
\newcommand{\N}{\mathbb{N}}
\newcommand{\Z}{\mathbb{Z}}
\newcommand{\diag}{{\mathrm{diag}}}
\newcommand{\Cu}{{\mathrm{Cu}}}
\newtheorem{lem}{Lemma}[section]
\newtheorem{corol}[lem]{Corollary}
\newtheorem{theor}[lem]{Theorem}
\newtheorem{prop}[lem]{Proposition}
\theoremstyle{definition}
\newtheorem{defi}[lem]{Definition}
\newtheorem{exem}[lem]{Example}
\newtheorem{rema}[lem]{Remark}
\begin{document}

\title[The Corona Factorization property and stability]{The Corona
  Factorization property, Stability, and the Cuntz semigroup of a $C^*$-algebra}%
\author{Eduard Ortega}
\address{Department of Mathematics and Computer Science, University of Southern Denmark, Campusvej 55, DK-5230, Odense M, Denmark} \email{ortega@imada.sdu.dk}
\author{Francesc Perera}
\address{Departament de Matem\`atiques, Universitat Aut\`onoma de Barcelona, 08193 Bellaterra, Barcelona, Spain} \email{perera@mat.uab.cat}
\author{Mikael R\o rdam}
\address{Department of Mathematical Sciences, University of Copenhagen, Universitets\-parken 5, DK-2100, Copenhagen \O, Denmark} \email{rordam@math.ku.dk}

\thanks{} \subjclass[2000]{Primary 16D70, 46L35; Secondary
06A12, 06F05, 46L80} \keywords{$C^*$-algebras, Corona Factorization,
Cuntz semigroup, decomposition rank}
\date{\today}

\begin{abstract}
The Corona Factorization Property, originally invented to study
extensions of \Cs s, conveys essential information about the
intrinsic structure of the \Cs. We show that the Corona Factorization
Property of a
$\sigma$-unital $C^*$-algebra is completely captured by its Cuntz
semigroup (of equivalence classes of positive elements in the
stabilization of $A$). The corresponding condition in the Cuntz semigroup
is a very weak
comparability property termed the Corona Factorization
Property for semigroups. Using this result one can for example
show that all unital $C^*$-algebras with finite decomposition rank
have the Corona Factorization Property.

Applying similar techniques we study the related question of when \Cs s
are stable. We give an intrinsic characterization, that we term
property (S), of \Cs s that have no non-zero unital quotients and
no non-zero bounded 2-quasitraces. We then show that property (S) is
equivalent to stability provided that the Cuntz semigroup of the \Cs{}
satisfies another (also very weak) comparability property, that we
call the $\omega$-comparison property.
\end{abstract}
\maketitle

\section{Introduction}
\noindent The Corona Factorization Property was defined and studied
by Kucerovsky and Ng in \cite{KN} building on work by
Elliott and Kucerovsky, \cite{EK}, in which \emph{purely large} \Cs
s were studied. Both concepts relate to the theory of extensions and
in particular to the important question on when extensions
are automatically absorbing.

A \Cs{} satisfies the Corona Factorization Property if every full
projection in the multiplier algebra of its stabilization is
properly infinite (and hence equivalent to the unit). The existence
of non-properly infinite full projections in the multiplier algebra
of a stable \Cs{} was noted (implicitly) in \cite{Ro2}, and more
explicitly in \cite{Ro4}, in connection with the construction of
non-stable \Cs s  that become stable when being tensored with a
matrix algebra. The existence of finite full projections in the
multiplier algebra of a stable \Cs{} was also essential in the
construction in \cite{Ro3} of a simple \Cs{} with a finite and an
infinite projection. In the language of Kucerovsky and Ng it is
shown in \cite{Ro3} that the \Cs{} $C(\prod_{n=1}^\infty S^2)$ does
not have the Corona Factorization Property.

Zhang proved a (partial) converse of these results, that a simple
\Cs{} of real rank zero with the Corona Factorization Property is
either stably finite or purely infinite.

It thus appears that failure to have the Corona Factorization
Property is an ``infinite dimensional'' property, and conversely
that all \Cs s with ``finite dimensional behavior'' should have the
Corona Factorization Property. (By finite and infinite dimensinality
we are, of course, not referring to the vector space dimension of
the \Cs{}, but rather to its non-commutative dimension---perhaps
best defined through Kirchberg and Winter's notion of decomposition
rank.) Pimsner, Popa, and Voiculescu studied in \cite{PPV} extensions
of $C(X)\otimes \mathcal{K}$, where $X$ is a finite-dimensional
compact metric space, and developed an $\mathrm{Ext}(X,-)$ theory.
It follows in particular from their work that $C(X) \otimes \cK$ has
the Corona Factorization Property when $X$ has finite dimension. The
assumption that $X$ is finite
dimensional is crucial.

Using Kirchberg and Winter notion of decomposition rank, \cite{KW},
mentioned above, Kucerovsky and Ng, \cite{KN2}, studied extensions
of type I $C^*$-algebras with finite decomposition.

In this paper we show that a $\sigma$-unital $C^*$-algebra (simple or
not) satisfies the Corona Factorization Property if, and only if, its
Cuntz semigroup satisfies a very weak comparison property that we
call the Corona Factorization Property for semigroups (also considered
in \cite{OPR}).
We also introduce stronger notions of comparison for ordered abelian
semigroups, some of which
are verified for the Cuntz semigroup of a unital \Cs{} with
finite decomposition rank,
and so the Corona Factorization Property also holds for these
algebras.

This parallels the property that the
authors introduced and examined in the article \cite{OPR}. There it
was shown, using entirely different techniques than those employed here,
that a $\sigma$-unital \Cs{} of \emph{real rank
zero} has the Corona Factorization Property if and only if its
monoid $V(A)$ of Murray-von Neumann equivalence
classes of projections in the stabilization of a \Cs{} $A$ has the
Corona Factorization Property (for monoids). 

In outline the paper is as follows. In Section~\ref{sec:semigroups},
we define and consider various comparability properties for ordered abelian
semigroups, including \emph{$n$-comparison} and
\emph{$\omega$-comparison} (and their weak counterparts), and the
\emph{Corona Factorization Property} for semigroups. These properties can be
viewed as weakened forms of the almost unperforation
property for semigroups (considered in \cite{Ro1}). In fact, an
ordered abelian semigroup has the $0$-comparison property if and only if it
is almost unperforated. It follows from a result of Toms and Winter,
\cite{TW}, that the Cuntz semigroup of any unital simple \Cs{} with
decomposition rank $n$ has the $n$-comparison property (and hence also
$\omega$-comparison and the Corona Factorization Property). It was this
result by Toms and Winter that led us to consider
$n$-comparison. 

In Section~\ref{sec:CFP} we establish (using the above mentioned
result of Toms and Winter) that the Cuntz semigroup of (non-simple)
unital \Cs s with finite decomposition rank has the weak
$n$-comparison property, and hence also the Corona Factorization
property. After the first version of this
paper was posted  Leonel Robert proved that the Cuntz semigroup of any
\Cs{} with nuclear dimension at most $n$ (respectively, $\omega$) has
$n$-comparison (respectively, $\omega$-comparison), \cite{Robert}. 

In Section~\ref{sec:stable} we consider an intrinsic property (that we call
property (S)) of a \Cs{} defined in terms of Cuntz'
comparison theory for \Cs s, and we show that it is equivalent to
the absence of unital quotients and bounded 2-quasitraces. It is
further shown that property (S) is equivalent to stability of a
\Cs{} if its Cuntz semigroup has the $\omega$-comparison property,
thus generalizing a result from \cite{HRW}.

In Section~\ref{sec:CFP_Cuntz} we prove our main result,
that a $\sigma$-unital \Cs{} has the Corona Factorization Property if
and only if its Cuntz semigroup satisfies the comparability condition
called the Corona Factorization Property
(for semigroups), and that every ideal in a $\sigma$-unital
\Cs{} has the Corona Factorization Property if and only if its Cuntz
semigroup satisfies  the strong Corona Factorization Property (for
semigroups).

\section{Comparability in ordered abelian semigroups} \label{sec:semigroups}

\noindent In this section we shall discuss a number of comparability
properties of ordered abelian semigroups.

Consider an  ordered abelian semigroup $(W,+,\leq)$ that we here and
in what is to follow tacitly
shall assume to be \emph{positive}, i.e., $x \le x+z$ for all $x,z \in
W$. We do 
not assume that the order is the algebraic one, given by $x \le y$
if and \emph{only if} $y = x+z$ for some $z$ in $W$.

A \textit{state on} $W$
\textit{normalized at} $x\in W$ is an additive order preserving map
from $W$ into $\mathbb{R}^+\cup\{\infty\}$ that maps $x$ to $1$. The
set of all states normalized at $x$ is denoted by $S(W,x)$.
Given two elements $x,y\in W$, one writes $x\propto y$ if there
exists $n\in \mathbb{N}$ such that $x\leq n y$.

The result below has appeared already in several versions in the
literature, perhaps first as the extension result of Goodearl and
Handelman in \cite[Lemma 4.1]{GH:Extending}. We wish to emphasize
the following formulation that will be essential for our paper.

\begin{prop}\label{stable_comparison} Let $(W,+,\leq)$ be an ordered
  abelian semigroup, and
let $x,y\in W$. Then the following statements are equivalent:
\begin{enumerate}
\item There exists $k\in \mathbb{N}$ such that $(k+1)x\leq ky$.
\item There exists $k_0\in \mathbb{N}$ such that $(k+1)x\leq k y$
for every $k\geq k_0$.
\item $x\propto y$ and $f(x)<f(y)$  for every $f\in S(W,y)$.
\end{enumerate}
\end{prop}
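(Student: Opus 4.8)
The plan is to prove the four implications $(2)\Rightarrow(1)$, $(1)\Rightarrow(3)$, $(1)\Rightarrow(2)$ and $(3)\Rightarrow(1)$, of which only the last carries any real content. The implication $(2)\Rightarrow(1)$ is trivial (take $k=k_0$). For $(1)\Rightarrow(3)$ the computation is short: if $(k+1)x\le ky$ then $x\le(k+1)x\le ky$ by positivity, so $x\propto y$; and for any $f\in S(W,y)$ we get $(k+1)f(x)=f((k+1)x)\le f(ky)=k$, so $f(x)$ is finite and $f(x)\le k/(k+1)<1=f(y)$.

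For $(1)\Rightarrow(2)$ I would argue purely combinatorially, with no states. Put $T=\{m\in\mathbb N:(m+1)x\le my\}$ and suppose $k\in T$. Adding $(k+1)x\le ky$ to itself $j$ times gives $(jk+j)x\le jky$, so for every $m$ with $jk\le m\le jk+j-1$ we have $(m+1)x\le(jk+j)x\le jky\le my$, i.e.\ $m\in T$. Thus $T$ contains the interval $[jk,\,j(k+1)-1]$ for every $j\ge1$, and for $j\ge k$ the interval indexed by $j+1$ starts at $(j+1)k=jk+k\le jk+j=j(k+1)$, so consecutive intervals leave no gap; their union is all of $[k^{2},\infty)$, and $(2)$ holds with $k_0=k^{2}$. (A special case of the same bookkeeping, used below: $T\ne\emptyset$ if and only if $\|x\|_y:=\inf\{p/q:p,q\in\mathbb N,\ qx\le py\}<1$, since $qx\le py$ with $q>p$ gives $(p+1)x\le qx\le py$.)

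The substance is $(3)\Rightarrow(1)$, which I would obtain from the Goodearl--Handelman extension technique. Assume $(3)$; then $x\le N_0y$ for some $N_0\ge1$, and by the remark above it suffices to rule out $\|x\|_y\ge1$. First reduce to the case that $y$ is an order unit by passing to $W_0=\{w\in W:w\propto y\}$: it contains $x$ and $y$, and $\le$, $\propto$ and $\|x\|_y$ are unchanged there, while every $f\in S(W_0,y)$ extends to a member of $S(W,y)$ by setting $f\equiv+\infty$ on $W\setminus W_0$ (this is additive and order preserving because $w\le w'\in W_0$ forces $w\in W_0$, and $w\notin W_0$ forces $w+w'\notin W_0$). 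Now split into cases. If $y$ is properly infinite, say $(n+1)y\le ny$, then $my\le ny$ for all $m\ge n$, so for $k\ge n$ one gets $(k+1)x\le(k+1)N_0y\le ny\le ky$ and $(1)$ holds outright. Otherwise, by the Goodearl--Handelman lemma \cite[Lemma~4.1]{GH:Extending}, $S(W_0,y)\ne\emptyset$ and $\sup\{f(x):f\in S(W_0,y)\}=\|x\|_y$; since $S(W_0,y)$ is compact in the topology of pointwise convergence (a closed subset of $[0,\infty]^{W_0}$) and $f\mapsto f(x)$ is continuous and bounded above by $N_0$, this supremum is attained, say by $f_0$. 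If $\|x\|_y\ge1$ then $f_0(x)\ge1=f_0(y)$, contradicting $(3)$; hence $\|x\|_y<1$ and $(1)$ holds.

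The one genuinely hard point is the production of a state $f_0$ with $f_0(x)=\|x\|_y$: the functional $\|\cdot\|_y$ is subadditive and positively homogeneous but in general not additive, so it cannot itself serve as the state, and extracting an honest additive, order-preserving state that attains it is precisely the Hahn--Banach-type content of Goodearl--Handelman, carried out after passing to the Grothendieck group of $W_0$ (where $y$ becomes an order unit). The properly-infinite case has to be separated out exactly because it is where that construction produces no state at all. Everything else---the two easy implications and the interval-covering argument for $(1)\Rightarrow(2)$---is elementary.
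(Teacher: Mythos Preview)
Your proof is essentially correct and follows the same overall architecture as the paper's, but there is a subtle gap in the $(3)\Rightarrow(1)$ direction worth naming.

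For $(1)\Rightarrow(2)$ your interval-covering argument and the paper's division-with-remainder argument are different bookkeeping for the same elementary fact; both are fine (you obtain $k_0=k^{2}$, the paper obtains $k_0=k(k+1)$). Your $(1)\Rightarrow(3)$ is the paper's $(ii)\Rightarrow(iii)$ verbatim. Your explicit separation of the properly-infinite case in $(3)\Rightarrow(1)$ is a nice touch that the paper leaves implicit.

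The gap is the claim that Goodearl--Handelman gives $\sup\{f(x):f\in S(W_0,y)\}=\|x\|_y$ with $\|x\|_y$ computed in the \emph{semigroup} $W_0$. Their Lemma~4.1 is a statement about (pre)ordered abelian groups; after you pass to the Grothendieck group $G(W_0)$, what you actually obtain is that the supremum equals $\inf\{p/q: qx\le_G py\}$, where $qx\le_G py$ means $qx+z\le py+z$ in $W_0$ for some $z$. Since $W_0$ is not assumed cancellative, this infimum could \emph{a priori} be strictly smaller than your $\|x\|_y$, and then your contradiction does not fire. The paper confronts exactly this point: from the Grothendieck-group output (phrased there as ``$nmx+z\le nm'y+z$ for some $n,z$'') it iterates to $rnmx+z\le rnm'y+z$, absorbs $z$ via $z\le py$ using that $y$ is an order unit, and sends $r\to\infty$ to produce an honest semigroup inequality $rnmx\le(rnm'+p)y$ with ratio below $1$. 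That absorption step is short but not automatic, and it is precisely what your citation hides. Once you insert it---equivalently, once you prove the lemma that $\|x\|_y$ computed in $W_0$ agrees with its Grothendieck-group counterpart whenever $y$ is an order unit---your argument is complete and in fact slightly cleaner than the paper's, since you get the attained maximum directly rather than via a separate compactness step.
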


\noindent
If $S(W,y)$ is empty, then statement (iii) above reduces to the
statement $x \propto y$. 

\begin{proof} (iii) $\Rightarrow$ (i). Assume first that $y$ is an
  order unit for $W$ and that (iii) holds. Use compactness of $S(W,y)$
  to find natural numbers $m >
  m'$ such that $f(mx) < f(m'y)$ for all $f \in S(W,y)$. By the
  results of Goodearl and
Handelman in \cite{GH:Extending} there exist a natural number $n$ and
an element $z$ in $W$ such that $nmx + z \le nm'y + z$. From this
inequality we get $rnmx + z \le rnm'y + z$ for all natural numbers
$r$. Now, $z \le py$ for some natural number $p$ (because $y$ was
assumed to be an order unit). This gives us $rnmx \le (rnm'+p)y$ for
all natural numbers $r$. Choose $r$ large enough so that $rnm >
rnm'+p$ and put $k=rnm'+p$. Then $(k+1)x \le rnmx \le ky$. 

If $y$ is not an order unit, then consider the order ideal $W'$ in $W$
generated by $y$ (i.e., $W'$ consists of all elements $w$ such that $w
\propto y$). Then $y$ is an order unit in $W'$ and $x$ belongs to
$W'$. Moreover, every state in $S(W,y)$ restricts to a state in
$S(W',y)$; and every state in $S(W',y)$ extends to a state in $S(W,y)$
by letting it attain the value $\infty$ in $W \setminus W'$. In this
way we can reduce the general case to the case dealt with above where 
$y$ is an order unit.

(ii) $\Rightarrow$ (iii). This is contained in the proof of
\cite[Proposition 3.2]{Ro1}, but here it comes again:
 First, if $(k+1)x \le ky$, then $x
\le ky$, so $x \propto y$. Second,  $(k+1)x \le ky$ implies that
$f(x) \le k(k+1)^{-1} < 1 = f(y)$ for every $f\in S(W,y)$.

(i) $\Rightarrow$ (ii). Suppose $(m+1)x\leq my$ for some positive integer
$m$. Put $k_0= (m+1)m$. For each $k \ge k_0$  write $k = (m+1)r
+ s$, where $r$ and $s$ are non-negative integers with $s \le
m$. Note that necessarily $r \ge m$. Therefore
$$(k+1)x=(m+1)rx+(s+1)x \leq (m+1)rx+(m+1)x \le mry + my
\le ky.$$
\end{proof}

\begin{defi} Given $x,y$ in an ordered abelian semigroup $W$. Then we
  say that $x$ is \emph{stably dominated} by $y$, written $x<_s y$, if
  the equivalent
conditions (i)--(iii) in Proposition~\ref{stable_comparison} hold.
\end{defi}

\begin{rema}
Notice that the relation $<_s$ is transitive. Indeed, let $x,y,z\in
W$ be such that  $x<_s y$ and $y<_s z$. Then by
Proposition~\ref{stable_comparison}~(ii) there exists
$k\in\mathbb{N}$ such that $(k+1)x\leq ky$ and $(k+1)y\leq kz$. But
then $(k+1)x\leq ky\leq (k+1)y\leq kz$, so $x<_sz$.
\end{rema}

\begin{rema} \label{rem:almunp}
The notion of almost unperforation from \cite[Definition 3.3]{Ro1}
can, in terms of stable domination, be rephrased as
  follows. An ordered abelian semigroup $W$ is almost unperforated if
  and only if for all $x,y$ in $W$, $x <_s y$ implies $x \le
  y$.
\end{rema}

\noindent Let us briefly remind the reader about the ordered Cuntz
semigroup(s) $W(A)$ and $\Cu(A)$ associated to a $C^*$-algebra $A$
introduced by Cuntz in \cite{Cun}. The (uncompleted) Cuntz semigroup
$W(A)$ arises as $M_\infty(A)^+/\!   \approx$, where 
$M_{\infty}(A)^+$ denote the positive elements in
the disjoint union $\bigcup_{n=1}^\infty
M_n(A)$, and where $\approx$ is Cuntz equivalence: For $a\in
M_n(A)^+$ and $b\in M_m(A)^+$ write $a\precsim b$ if $x_k^*bx_k
\to a$ for some sequence $\{x_k\}$ in $M_{m,n}(A)$, and write $a
\approx b$ if $a \precsim b$ and $b \precsim a$. Equip $W(A)$ with
addition arising from direct sum $a\oplus b=
\diag(a,b)\in M_{n+m}(A)$, and with
the order induced by $\precsim$. Let
$\langle a \rangle \in W(A)$ denote the equivalence class containing
$a$. 

In \cite{CEI}, Coward, Elliott, and Ivanescu
introduced a modified version of the Cuntz semigroup, $\Cu(A)$. They
use suitable equivalence classes of countably generated Hilbert
modules (which, in the case of stable rank one, amount to
isomorphism) to obtain an ordered abelian semigroup, which in certain
good situations can be
viewed as the completion of $W(A)$, see \cite{ABP}, 
and which turns out to be order-isomorphic
to $W(A\otimes\cK)$. Unlike the functor $A \mapsto W(A)$, the new
Cuntz semigroup functor $A \mapsto \Cu(A)$ is continuous; and
$\Cu(A)$ is closed under suprema of increasing sequences. 

Coward, Elliott, and Ivanescu also formalize the notion of
compact containment in the Cuntz semigroup to arbitrary ordered
abelian semigroups $W$ that admit suprema of increasing sequences as
follows: Given two elements $x,y$ in $W$, $x$ is \emph{compactly
  contained} in $y$, sometimes colloquially referred to as
$x$ is \emph{way below} $y$, and
denoted by $x\ll y$, if whenever $\{y_n\}$ is an
increasing sequence with supremum greater than or equal to $y$,
eventually $x\leq y_n$. 

A proto-example of compact containment in the Cuntz semigroup is
$\langle (a-\ep)_+ \rangle \ll \langle a \rangle$ that holds in
$\Cu(A)$ for all \Cs s $A$, for all positive elements $a \in A
\otimes \cK$, and for all $\ep >0$. 

\begin{defi}[Complete ordered abelian semigroups] \label{def:complete} 
An ordered abelian semigroup $(W,+,\le)$ is said to be \emph{complete}
if every increasing sequence in $W$ admits a supremum; and if for
every pair of elements $x,y \in W$ the following holds: $x \le y$ if
and only if $x' \le y$ for all $x' \in W$ with $x' \ll x$.
\end{defi}

The Cuntz semigroup $\Cu(A)$ is complete and in fact belongs to the
category {\bf{Cu}}, defined in \cite{CEI}, which is the appropriate frame
in which to study this object. 
For our purposes the weaker notion of completeness
will suffice. 

Identifying $A$ and matrix algebras over $A$ as hereditary
subalgebras of $A \otimes \cK$ we can regard $W(A)$ as being a
subsemigroup of $\Cu(A)$. Each positive element $a$ in $A$ or in a
matrix algebra over $A$ defines a class $\langle a \rangle$ in
$\Cu(A)$.  

Given $a,b\in A^+$ write $a \otimes 1_n$ to denote the $n$-fold direct
sum $a \oplus \cdots \oplus a$. Clearly, $\langle a
\otimes 1_n \rangle = n \langle a \rangle$ in $\Cu(A)$. 
Write $a \prec_s b$ if
$\langle a \rangle <_s \langle b \rangle$. 
About this relation we have the lemma below, which is similar
to \cite[Proposition~2.4]{Ro:UHFII}.

\begin{lem} \label{ep-delta}
Let $a$ and $b$ be positive elements in a \Cs{} $A$ and suppose that
$a \prec_s b$. Then, for each $\ep > 0$ there exists $\delta > 0$
such that $(a-\ep)_+ \prec_s (b-\delta)_+$.
\end{lem}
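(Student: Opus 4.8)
The plan is to reduce the assertion to the corresponding, and by now standard, statement for ordinary Cuntz subequivalence, and then to amplify. By definition, $a \prec_s b$ means $\la a \ra <_s \la b \ra$ in $\Cu(A)$, and by Proposition~\ref{stable_comparison} (using the equivalence of (i) and (iii)) this is the same as saying that there is some $k \in \N$ with $(k+1)\la a \ra \le k \la b \ra$ in $\Cu(A)$. Since $(k+1)\la a \ra = \la a \otimes 1_{k+1} \ra$ and $k \la b \ra = \la b \otimes 1_k \ra$, this inequality says precisely that $a \otimes 1_{k+1} \precsim b \otimes 1_k$ (inside a matrix algebra over $A$, equivalently inside $A \otimes \cK$).

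The key input is the following $\ep$--$\delta$ perturbation statement for $\precsim$, which is essentially \cite[Proposition~2.4]{Ro:UHFII}: if $p,q$ are positive elements of a \Cs{} with $p \precsim q$, then for every $\ep > 0$ there is $\delta > 0$ with $(p-\ep)_+ \precsim (q-\delta)_+$. One way to see this: from $(p-\ep/2)_+ \precsim q$ one produces (in the standard refined form) an element $y$ with $(p - 3\ep/4)_+ = y^* q y$ exactly; then $(p-\ep)_+ = (y^* q y - \ep/4)_+$, and since $\|y^* q y - y^*(q-\delta)_+ y\| \le \delta \|y\|^2 < \ep/4$ once $\delta$ is chosen small enough, the fact that $\|c-c'\| < \eta$ forces $(c-\eta)_+ \precsim c'$ yields $(p-\ep)_+ \precsim y^*(q-\delta)_+ y \precsim (q-\delta)_+$.

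Applying this with $p = a \otimes 1_{k+1}$, $q = b \otimes 1_k$, and the given $\ep > 0$, we obtain $\delta > 0$ with $(a \otimes 1_{k+1} - \ep)_+ \precsim (b \otimes 1_k - \delta)_+$. Since continuous functional calculus commutes with direct sums, $(a \otimes 1_{k+1} - \ep)_+ = (a-\ep)_+ \otimes 1_{k+1}$ and $(b \otimes 1_k - \delta)_+ = (b-\delta)_+ \otimes 1_k$, so this reads $(k+1)\la (a-\ep)_+ \ra \le k \la (b-\delta)_+ \ra$ in $\Cu(A)$. Invoking Proposition~\ref{stable_comparison} once more gives $\la (a-\ep)_+ \ra <_s \la (b-\delta)_+ \ra$, that is, $(a-\ep)_+ \prec_s (b-\delta)_+$, as desired.

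The only genuine content here is the single-element perturbation lemma cited above; once that is available, the rest is bookkeeping with Proposition~\ref{stable_comparison} and with the compatibility of the cutdowns $(\,\cdot\,-\ep)_+$ with direct sums. If one wished to avoid citing \cite[Proposition~2.4]{Ro:UHFII} and argue from scratch, the delicate point would be exactly its proof, namely controlling the error introduced by the $\delta$-cutdown via the norm $\|y\|^2$ of the intertwiner and absorbing it into the spectral margin $\ep/4$; the amplification step itself introduces no new difficulty.
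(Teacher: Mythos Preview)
Your proof is correct and follows essentially the same route as the paper: translate $a \prec_s b$ into $a \otimes 1_{k+1} \precsim b \otimes 1_k$, apply the $\ep$--$\delta$ perturbation lemma \cite[Proposition~2.4]{Ro:UHFII} at the level of ordinary Cuntz subequivalence, use that $(\,\cdot\,-\eta)_+$ commutes with direct sums, and read off the stable domination. The only difference is that you additionally sketch the proof of \cite[Proposition~2.4]{Ro:UHFII}, which the paper simply cites.
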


\begin{proof} For $c \in A^+$ note that $((c
  \otimes 1_k)-\eta)_+ = (c-\eta)_+ \otimes 1_k$.

There exists a positive integer $k$ such that $(k+1)
  \langle a \rangle \le k \langle b \rangle$.
Hence $a \otimes 1_{k+1} \precsim b \otimes 1_k$. Let $\ep >0$. It
then follows from 
\cite[Proposition~2.4]{Ro:UHFII} that there exists $\delta >0$ such
that
$$(a-\delta)_+ \otimes 1_{k+1} = (a \otimes 1_{k+1} - \delta)_+ \precsim
(b \otimes 1_k - \ep)_+ = (b-\ep)_+ \otimes 1_k.$$ This shows that
$(a-\delta)_+ \prec_s (b-\ep)_+$.
\end{proof}

Let us note some properties that can
be deduced from \cite[Proposition 2.4]{Ro:UHFII} and
Lemma~\ref{ep-delta}. If $x,y \in \Cu(A)$, then
\begin{itemize}
\item[(a)] $x \le y$ if and only if $x_0 \le y$ for every $x_0 \in \Cu(A)$
  with $x_0 \ll x$;
\item[(b)]  if $x \le y$ and if $x_0 \in \Cu(A)$ is such that $x_0 \ll x$, then
  there is $y_0 \in \Cu(A)$ with $y_0 \ll y$ and $x_0 \le y_0$;
\item[(c)] if $x <_s y$ and if $x_0 \in \Cu(A)$ is such that $x_0 \ll x$, then
  there is $y_0 \in \Cu(A)$ with $y_0 \ll y$ and $x_0 <_s y_0$.
\end{itemize}

\noindent Our first comparability property, given in
Definition~\ref{def:n-comp} below, is prompted by
a result of Toms and Winter, \cite[Lemma 6.1]{TW}. Recall that if
$\tau$ is a
positive trace (or a 2-quasitrace) in $A$, then one can associate to it a
dimension function $d_\tau \colon \Cu(A) \to [0,\infty]$ given by
\begin{equation} \label{eq:d_tau}
d_\tau(\langle a \rangle) = \lim_{n \to \infty} \bar{\tau}(a^{1/n}),
\end{equation}
where $a$ is a positive element in $A\otimes \cK$ and $\bar{\tau}$ is
the usual extension of $\tau$ to $A\otimes \cK$. The trace property
ensures that $d_\tau$ is 
well-defined. We can also view $d_\tau$ as being a function on the
positive elements in $A\otimes \cK$. We shall
not distinguish between the two situations.

\begin{prop}[Toms and Winter] \label{prop:tw}
Let $A$ be a simple, separable,
  and unital \Cs{} of decomposition rank $n < \infty$. Let $a,d_0,d_1,
  \dots, d_n$ be positive elements in $A$ such that $d_\tau( a ) <
  d_\tau( d_j )$ for $j=0,1, \dots, n$ and for all tracial states
  $\tau$ on $A$ (where $d_\tau$ is the dimension function on $A$
  associated with the trace $\tau$). It follows that
$$\langle a \rangle \le \langle d_0 \rangle + \langle d_1 \rangle +
\cdots + \langle d_n \rangle,$$ in the Cuntz semigroup $\Cu(A)$.
\end{prop}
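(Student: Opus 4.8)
The statement is attributed to Toms and Winter (\cite[Lemma 6.1]{TW}), so the intended proof is surely a citation together with a short argument reconciling the precise form used here with what is stated there. The plan is therefore first to recall that decomposition rank $n$ for $A$ gives, by the Toms--Winter analysis, a strong form of comparison: there is a result saying that if $a, d_0, \dots, d_n \in A^+$ and $d_\tau(a) < d_\tau(d_j)$ for all tracial states $\tau$ and all $j$, then $\langle a\rangle$ is dominated by $\langle d_0\rangle + \cdots + \langle d_n\rangle$ in $\mathrm{W}(A)$, hence in $\Cu(A)$. The one point that needs care is the passage between the ``$\mathrm{W}(A)$ with its sup-free order'' formulation and the $\Cu(A)$ formulation, but since $\mathrm{W}(A)$ embeds as an ordered subsemigroup of $\Cu(A) \cong \mathrm{W}(A\otimes\cK)$ and the classes $\langle a\rangle$, $\langle d_j\rangle$ of elements of $A$ live in $\mathrm{W}(A)$, the inequality in $\mathrm{W}(A)$ transfers verbatim to $\Cu(A)$.

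If one instead wants to reprove it, the route is the standard $\ep$-cutdown strategy. First I would fix $\ep > 0$ and produce, using compactness of the tracial state space $T(A)$ together with lower semicontinuity of $\tau \mapsto d_\tau((a-\ep)_+)$ and upper semicontinuity of the relevant quantities, a uniform gap: $d_\tau((a-\ep)_+) < d_\tau(d_j)$ for all $\tau$ and all $j$, and even a slightly stronger strict inequality with room to spare. The heart of the matter is then to exploit decomposition rank $n$: a suitable completely positive approximation of $A$ through a finite-dimensional algebra decomposed into $n+1$ ``colored'' pieces lets one decompose (an $\ep$-cutdown of) $a$ into $n+1$ pieces, each of which is comparable to the corresponding $d_j$ by the strict comparison of dimension functions inside (approximately) finite-dimensional, hence nicely-behaved, subalgebras — this is where the rank-$n$ hypothesis is genuinely used and is the main obstacle. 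Assembling the $n+1$ comparisons gives $(a-\ep)_+ \precsim d_0 \oplus \cdots \oplus d_n$, i.e.\ $\langle (a-\ep)_+\rangle \le \langle d_0\rangle + \cdots + \langle d_n\rangle$ for every $\ep > 0$.

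Finally I would let $\ep \to 0$: since $\langle(a-\ep)_+\rangle \ll \langle a\rangle$ and these classes increase to $\langle a\rangle$ as $\ep$ decreases, completeness of $\Cu(A)$ (Definition~\ref{def:complete}) yields $\langle a\rangle = \sup_\ep \langle(a-\ep)_+\rangle \le \langle d_0\rangle + \cdots + \langle d_n\rangle$, as desired. I expect the bookkeeping in the colored decomposition step — arranging that the $n+1$ pieces are honestly supported in regions where strict comparison can be invoked, and controlling the errors uniformly over $T(A)$ — to be the technically delicate part; everything else (the compactness/semicontinuity gap argument and the final limit) is routine.
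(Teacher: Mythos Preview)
Your assessment is correct: the paper does not prove this proposition at all. It is stated without proof as a result of Toms and Winter, with the attribution in the proposition heading and the citation \cite[Lemma 6.1]{TW} appearing in the surrounding text. Your first paragraph captures exactly the paper's approach (pure citation), and the additional sketch you give of the $\ep$-cutdown and colored-decomposition strategy is a reasonable outline of the Toms--Winter argument itself, but is not something the present paper undertakes.
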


\begin{defi}[The $n$-comparison property] \label{def:n-comp}
Let $(W,+,\leq)$ be an ordered abelian semigroup and let
  $n$ be a natural number. Then $W$ is said to have the
\emph{$n$-comparison property} if whenever $x,y_0,\ldots,y_n$ are
elements in $W$  with $x<_s y_j$ for all $j$, then $x\leq
y_0+y_1+\cdots+y_n$.
\end{defi}

\noindent Note that $W$ has the $0$-comparison property if and only if $W$ is
almost unperforated, cf.\ Remark~\ref{rem:almunp}. Note also, that if
$W$ has the $n$-comparison property for some $n$, then it has the
$m$-comparison property for all $m \ge n$.



With Definition~\ref{def:n-comp} at hand we can rephrase the
proposition of Toms and Winter above as follows:

\begin{prop}\label{prop-tw}
Let $A$ be a simple, separable and unital $C^*$-algebra with
decomposition rank $n < \infty$. Then $W(A)$ and $\Cu(A)$ have the 
$n$-comparison property.
\end{prop}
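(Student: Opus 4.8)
The plan is to translate Proposition~\ref{prop:tw} (Toms--Winter) into the semigroup language of Definition~\ref{def:n-comp} by unwinding the relation $<_s$ at the level of positive elements and passing between $\Cu(A)$ and $W(A)$. The main point is that $<_s$ on the Cuntz semigroup is not a priori the same as the dimension-function inequality appearing in Proposition~\ref{prop:tw}, so the first task is to show that $x <_s y$ in $\Cu(A)$ implies $d_\tau(x) < d_\tau(y)$ for every tracial state $\tau$ on $A$. This follows from Proposition~\ref{stable_comparison}: if $x <_s y$ then $(k+1)x \le ky$ for some $k$, and applying the dimension function $d_\tau$ (which is additive and order preserving, hence a scalar multiple of a state, provided $d_\tau(y)$ is finite) gives $(k+1)d_\tau(x) \le k\, d_\tau(y)$, whence $d_\tau(x) < d_\tau(y)$ when $d_\tau(y) < \infty$; and when $d_\tau(y) = \infty$ the strict inequality $d_\tau(x) < d_\tau(y)$ still holds unless $d_\tau(x) = \infty$ as well, which is excluded because $x \propto y$ forces $d_\tau(x) \le n\, d_\tau(y)$ with $d_\tau(y)$ finite in that remaining case; in short $x <_s y$ always yields $d_\tau(x) < d_\tau(y)$.

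Next I would handle the $n$-comparison statement for $W(A)$ directly. Take $x, y_0, \dots, y_n \in W(A)$ with $x <_s y_j$ for all $j$. Represent these classes by positive elements $a$ and $d_0, \dots, d_n$ in matrix algebras over $A$; enlarging the matrix size we may assume all of them lie in a common $M_r(A)$, which is again simple, separable, unital, and has decomposition rank $n$. By the previous paragraph, $d_\tau(a) < d_\tau(d_j)$ for every tracial state $\tau$ on $M_r(A)$ and every $j$. Proposition~\ref{prop:tw}, applied to $M_r(A)$, then gives $\langle a \rangle \le \langle d_0 \rangle + \cdots + \langle d_n \rangle$ in $\Cu(M_r(A)) \cong \Cu(A)$. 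Since this inequality is between elements of $W(A)$ (finite direct sums of classes of positive elements in matrix algebras) and $W(A)$ sits inside $\Cu(A)$ order-compatibly, it holds already in $W(A)$. Hence $W(A)$ has the $n$-comparison property.

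Finally, for $\Cu(A)$ I would use the continuity/compact-containment apparatus recalled before Lemma~\ref{ep-delta}. Let $x, y_0, \dots, y_n \in \Cu(A)$ with $x <_s y_j$ for all $j$. By completeness of $\Cu(A)$ (property (a) in the list following Lemma~\ref{ep-delta}) it suffices to show $x_0 \le y_0 + \cdots + y_n$ for every $x_0 \ll x$. Fix such an $x_0$. By property (c), for each $j$ there is $y_{0,j} \ll y_j$ with $x_0 <_s y_{0,j}$. Now the elements $x_0, y_{0,0}, \dots, y_{0,n}$ can be taken in $W(A)$ — this is where one uses that compactly contained elements of $\Cu(A)$ are dominated by, hence (after replacing by a slightly smaller cutdown $(a-\ep)_+$) are classes coming from $W(A)$, together with Lemma~\ref{ep-delta} to preserve the relation $<_s$ after the cutdown. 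Applying the already-established $n$-comparison for $W(A)$ gives $x_0 \le y_{0,0} + \cdots + y_{0,n} \le y_0 + \cdots + y_n$. The hard part will be this last transition: carefully arranging that the compactly contained witnesses can simultaneously be realized inside $W(A)$ while keeping all the strict dominations $x_0 <_s y_{0,j}$ intact, which is exactly what Lemma~\ref{ep-delta} and observations (a)--(c) are designed to supply, so in practice it is bookkeeping rather than a genuine obstacle. Alternatively one can simply note $\Cu(A) \cong W(A \otimes \cK)$ and that $A \otimes \cK$ inherits the relevant comparison from $A$ via matrix algebras, giving the statement for $\Cu(A)$ with no extra work beyond the $W(A)$ case.
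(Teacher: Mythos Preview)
Your approach is essentially the paper's: reduce to $W(A)$ by passing to a common matrix algebra $M_r(A)$ (which preserves decomposition rank), convert $x <_s y_j$ into the strict dimension-function inequality $d_\tau(a) < d_\tau(d_j)$ for all tracial states, invoke Proposition~\ref{prop:tw}, and then transfer to $\Cu(A)$ via the compact-containment bookkeeping---which the paper isolates as the separate Lemma~\ref{n-comp:W-Cu} and you reproduce inline.

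Two small corrections. First, your handling of the case $d_\tau(y)=\infty$ is confused (the clause ``with $d_\tau(y)$ finite in that remaining case'' contradicts the hypothesis) and in any event unnecessary: for $x,y\in W(A)$ with $A$ simple and unital, every $d_\tau$ is finite on $W(A)$ and strictly positive on nonzero elements, so $(k+1)d_\tau(x)\le k\,d_\tau(y)$ immediately gives $d_\tau(x)<d_\tau(y)$. The paper phrases this step slightly differently, going through Proposition~\ref{stable_comparison}(iii) and then using simplicity and unitality to identify states normalized at $y_j$ with multiples of those normalized at $\langle 1_A\rangle$. Second, your closing ``alternative'' via $\Cu(A)\cong W(A\otimes\cK)$ does not work as stated: $A\otimes\cK$ is not unital, so Proposition~\ref{prop:tw} does not apply to it, and one cannot simply absorb $\cK$ into a matrix-algebra argument. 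The passage from $W(A)$ to $\Cu(A)$ genuinely needs the compact-containment reduction you sketch (equivalently, Lemma~\ref{n-comp:W-Cu}).
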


\begin{proof}
By Lemma~\ref{n-comp:W-Cu} below it suffices to show this for $W(A)$. 

Let $x,y_0,\ldots,y_n\in W(A)$ be  such that $x<_s y_j$ for every
$j=0,\ldots,n$. Upon replacing $A$ by a matrix algebra over $A$
(which does not change the decomposition rank) we may assume that
there are positive elements $a,d_0,d_1, \dots, d_n$ in $A$ such that
$x = \langle a \rangle$ and $y_j = \langle d_j \rangle$.

We know from Proposition \ref{stable_comparison} that $f(x)<f(y_j)$
for every dimension function $f$ on $A$ normalized at $y_j$. As $A$
is simple and unital, every such $f$ is a multiple of a dimension
function which is normalized at the unit: $\langle 1_A \rangle$. It
follows that $d_\tau(a) < d_\tau(d_j)$ for every tracial state
$\tau$ on $A$ (because $d_\tau$ then is a dimension function on $A$
normalized at $1_A$) Thus, by \cite[Lemma 6.1]{TW}  (which in fact is
Proposition~\ref{prop:tw}), we get that $a \precsim d_0 \oplus d_1 
\oplus \cdots \oplus d_n$, which in turn implies that $x\leq
y_0+y_1+\cdots + y_n$, as desired.
\end{proof}

\noindent As mentioned in the introduction, 
Leonel Robert improved this result after the first version
of this paper was made public, see \cite{Robert}. 

\begin{lem} \label{n-comp:W-Cu}
Let $A$ be a \Cs, and suppose that $W(A)$ has the $n$-comparison
property. Then $\Cu(A)$ has the $n$-comparison property. 
\end{lem}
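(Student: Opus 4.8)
The plan is to deduce the statement for $\Cu(A)$ from the hypothesis on $W(A)$ by approximating elements of $\Cu(A)$ from below by elements of $W(A)$, and then feeding this into properties (a) and (c) recorded above.

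The technical core is the following approximation fact: for every $c\in(A\otimes\cK)^+$ and every $\ep>0$ there are $l\in\N$ and a positive $b\in M_l(A)$ (viewed inside $A\otimes\cK$) with $(c-\ep)_+\precsim b\precsim(c-\ep/2)_+$; equivalently, for every $x\in\Cu(A)$ and every $x_0\ll x$ there is $p\in W(A)$ with $x_0\le p\ll x$. To prove the first form, choose an approximate unit $(u_\lambda)$ for $A\otimes\cK$ with each $u_\lambda$ lying in some matrix algebra $M_{l_\lambda}(A)$ (possible since $\bigcup_l M_l(A)$ is dense), and set $b_\lambda:=u_\lambda(c-\ep/2)_+u_\lambda$. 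Since $M_{l_\lambda}(A)$ is a corner of $A\otimes\cK$ containing $u_\lambda$, we have $b_\lambda\in M_{l_\lambda}(A)^+$; writing $b_\lambda=s_\lambda s_\lambda^*$ with $s_\lambda=u_\lambda(c-\ep/2)_+^{1/2}$ gives $b_\lambda\sim s_\lambda^*s_\lambda\le(c-\ep/2)_+$, hence $b_\lambda\precsim(c-\ep/2)_+$; and $b_\lambda\to(c-\ep/2)_+$, so for $\lambda$ far enough out $\|(c-\ep/2)_+-b_\lambda\|<\ep/2$, whence $(c-\ep)_+=((c-\ep/2)_+-\ep/2)_+\precsim b_\lambda$ by the standard perturbation lemma. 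Taking $b=b_\lambda$ for such $\lambda$ proves the first form; the second then follows since $x_0\ll x=\langle c\rangle$ forces $x_0\le\langle(c-\ep)_+\rangle$ for some $\ep>0$, so that $p:=\langle b\rangle$ satisfies $x_0\le\langle(c-\ep)_+\rangle\le p\le\langle(c-\ep/2)_+\rangle\ll x$.

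Granting this, the argument is short. Let $x,y_0,\dots,y_n\in\Cu(A)$ with $x<_s y_j$ for all $j$. By property (a) it suffices to show $x_0\le y_0+\cdots+y_n$ for every $x_0\ll x$. Fix such an $x_0$ and choose $p\in W(A)$ with $x_0\le p\ll x$. For each $j$, property (c) applied to $x<_s y_j$ and $p\ll x$ produces $y_j'\ll y_j$ with $p<_s y_j'$; applying the approximation fact to $y_j'\ll y_j$ produces $q_j\in W(A)$ with $y_j'\le q_j\ll y_j$, and then $p<_s y_j'\le q_j$ yields $p<_s q_j$ (routine monotonicity of $<_s$). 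Now $p,q_0,\dots,q_n$ all lie in $W(A)$, so its $n$-comparison property gives $p\le q_0+\cdots+q_n$; as $q_j\le y_j$ and $W(A)\hookrightarrow\Cu(A)$ is order preserving, we conclude $x_0\le p\le y_0+\cdots+y_n$, as wanted.

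I expect the approximation fact to be the only genuine obstacle: it is the one place where the concrete relationship between $W(A)$ and $\Cu(A)$ is used, rather than abstract properties of complete ordered semigroups. The remainder is bookkeeping, relying on the cited properties (a) and (c), on the elementary observations that $\precsim$, $\le$, $+$ (and hence $<_s$) restrict correctly between $W(A)$ and $\Cu(A)$ and that $u<_s v\le w$ implies $u<_s w$, and on the standard Cuntz-comparison perturbation lemmas already invoked in Lemma~\ref{ep-delta}.
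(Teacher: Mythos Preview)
Your proof is correct and follows essentially the same strategy as the paper's: reduce to elements of $W(A)$ by approximation, invoke property~(c) to pass the relation $<_s$ down to those approximants, apply the $n$-comparison hypothesis in $W(A)$, and conclude via the order embedding $W(A)\hookrightarrow\Cu(A)$. The only notable difference is that you spell out and prove the interpolation fact (for every $x_0\ll x$ in $\Cu(A)$ there is $p\in W(A)$ with $x_0\le p\ll x$), whereas the paper simply asserts that each element of $\Cu(A)$ is the supremum of an increasing sequence from $W(A)$ with each term compactly contained in the limit; your explicit argument via $u_\lambda(c-\ep/2)_+u_\lambda$ is a clean way to justify this. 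Beyond that, the two arguments are the same up to the order in which property~(c) and the approximation step are applied.
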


\begin{proof} 
Let $x,y_0,y_1, \dots, y_n$ in $\Cu(A)$ be given
  satisfying $x <_s y_j$ for all $j$. As $x$ is the supremum of an increasing
  sequence of elements $\{x_k\}$ from $W(A)$ each satisfying $x_k \ll
  x$ it suffices to show that $x' \le y_0 + \cdots + y_n$ for every
  $x' \in W(A)$ with $x' \ll x$. Each $y_j$ is likewise the supremum
  of an increasing sequence $\{y_j^{(k)}\}$ of elements from
  $W(A)$. Use property (c) above to conclude that there is a natural
  number $k$ such that $x' <_s y_j^{(k)}$ holds for all $j$. As the
  inclusion mapping $W(A) \hookrightarrow \Cu(A)$ is an order embedding, the
  inequality $x' <_s y_j^{(k)}$ also holds in $W(A)$. It follows that
  $x' \le y_0^{(k)} + \cdots + y_n^{(k)} \le y_0 + \cdots + y_n$ as
  desired.
\end{proof}

We proceed to define a comparison property which is weaker than the
$n$-comparison property for all $n$. 

\begin{defi}[The $\omega$-comparison property] \label{def:wcomp}
A complete ordered abelian semigroup
  $(W,+,\le)$ is said to have the \emph{$\omega$-com\-pa\-ri\-son property} if
  whenever $x',x,y_0,y_1,y_2, \dots$ are elements in $W$ such that $x <_s
  y_j$ for all $j$ and $x'\ll x$, then $x' \le y_0+y_1+
  \cdots + y_n$ for some $n$
  (that may depend on the element $x'$).
\end{defi}

\noindent It is clear that if $W$ has the $n$-comparison property for
some $n$, then $W$ also has the $\omega$-comparison property. 

We shall now (re-)define two even weaker comparison properties for an
ordered abelian semigroup, the \emph{strong Corona Factorization
  Property}  and the \emph{Corona Factorization
  Property}. These were also defined (slightly differently)
in our paper, \cite{OPR}, written in
parallel with this paper. In \cite{OPR} we were interested in the
case of the (discrete) algebraically ordered semigroup, $V(A)$, of
Murray-von Neumman equivalence classes of projections. In the present
case of the Cuntz semigroup, one has to take into consideration its
continuous aspect where
the definition below is more appropriate (and it extends the definition
given in \cite{OPR}, see Remark \ref{rem:int}). We shall sometimes
refer to the definitions of the Corona Factorization Property in
\cite{OPR} as the discrete versions and the ones given below as the
continuous verions.

\begin{defi}[The strong Corona Factorization Property for semigroups]
Let \\ $(W,+,\leq)$ be a complete ordered abelian semigroup. Then
  $W$ is said to satisfy the \emph{strong Corona Factorization Property}
  if given any $x',x$ in $W$, any sequence
$\{y_n\}$ in $W$, and any positive integer $m$
satisfying $x'\ll x$ and $x\leq my_n$ for all $n$, then there
exists a positive integer $k$ such that $x'\leq y_1+y_2 +\cdots+y_k$.
\end{defi}

\noindent Fullness, as defined below, was also considered
in \cite{OPR}, and again we must extend this definition so that it
applies to complete ordered (positive) semigroups.

\begin{defi}[Full elements and sequences]
Let $(W,+,\leq)$ be a complete ordered abelian semigroup.

An element $x$ in $W$
is said to be \emph{full} if for any $y',y \in
W$ with $y'\ll y$, one has $y'\propto  x$.

A sequence $\{x_n\}$ in $W$ is said to be \emph{full}
if it is increasing and for any $y',y \in
W$ with $y'\ll y$, one has $y'\propto  x_n$ for some (hence all
sufficiently large) $n$.
\end{defi}

\noindent Every order unit in $W$ is full, but the reverse is not
always true. The constant sequence $\{x_n\}$, with $x_n = x$ for all $n$,
is full if and only if $x$ is full.

Suppose that $A$ is a \Cs{} and that $\{a_n\}$ is a sequence of
positive elements in $A$. Then $\{\langle a_n \rangle \}$ is full
in $\Cu(A)$ if and only if $a_1 \precsim a_2 \precsim a_3 \precsim
\cdots$ and $\{a_n\}$ is full in $A$ (in the sense of $\{a_n\}$
not being contained in a proper closed two-sided ideal in $A$).

\begin{defi}[Corona Factorization Property for semigroups]
Let $(W,+,\leq)$ be a complete ordered abelian semigroup. Then
  $W$ is said to satisfy the \emph{Corona Factorization Property}
  if given any
full sequence $\{x_n\}$ in $W$, any sequence
$\{y_n\}$ in $W$, an element $x'$ in $W$, and a positive integer $m$
satisfying $x'\ll x_1$ and $x_n\leq my_n$ for all $n$, then there
exists a positive integer $k$ such that $x'\leq y_1+y_2 +\cdots+y_k$.
\end{defi}

\begin{rema} The content of this remark was suggested to us by
  Leonel Roberts and George Elliott. 

Suppose that $W$ is a complete
  ordered abelian semigroup that contains a full sequence (this is the
  case whenever $W = \Cu(A)$ where $A$ is a separable \Cs). Then,
  letting $u$ be the supremum of any full sequence in $W$, it follows
  that the supremum, $p$, of the sequence $\{nu\}$ is the largest
  element in $W$, i.e., $w \le p$ for all $w \in W$. In
  particular, $p$ is properly
  infinite because $2p \le p$. Obviously $p$ is unique with these properties.

Consider the following property (Q) of $W$: For every $u \in W$ and for
every natural number $m$, if $mu = p$, then $u=p$. (Equivalently, one
can ask that $u$ be properly infinite if a multiple of $u$ is equal to
$p$.) We show below that
property (Q) implies the Corona Factorization Property, but we do not
know if the two properties are equivalent in general. One could
consider the following stronger property (QQ) of an ordered abelian
semigroup $W$: every element $u$ in $W$ a multiple of which is
properly infinite is itself properly infinite. Then clearly (QQ)
implies (Q). It is shown in \cite[Theorem 5.14]{OPR} 
that any refinement monoid that satisfies the Corona Factorization
Property also satisfies (Q), so the two conditions are equivalent in
this case. It is also shown in \cite[Theorem 5.14]{OPR} 
that any refinement monoid that satisfies the strong Corona Factorization
Property will satisfy (QQ) (and hence (Q)).
In \cite{BoRo} it is shown that any complete ordered abelian
semigroup with $\omega$-comparison satisfies (QQ) (and hence (Q)).

We proceed to show that (Q) implies the Corona Factorization Property.
Indeed, let $\{x_n\}$ be a full sequence in $W$, let $x' \in W$ be
such that $x' \ll x_1$, let $\{y_n\}$ be a sequence in $W$ such that $x_n
\le my_n$ for all $n$ and for some fixed $m \in \N$. Assume that
condition (Q) above holds. We must show that $x' \le y_1 + \cdots + y_k$ for
some $k$. 

Note first that the supremum of the sequence $\{x_1+\cdots + x_n\}$ is
$p$. For all $w' \ll w$ in $W$ we have $w' \le \ell x_n$ for some
natural numbers $\ell, n$. 
Hence $w' \le x_1 + \cdots + x_{\ell+n-1}$ which again is less
than the supremum of the sequence  $\{x_1+\cdots + x_n\}$.   
Let $u$ be the supremum of the sequence $\{y_1+\cdots+y_n\}$. Then $mu
\ge m(y_1+ \cdots + y_n) \ge x_1+ \cdots + x_n$ for all $n$. Therefore
$mu \ge p$, whence $mu=p$. By assumption, $u=p$. In particular we have
that $x_1 \le u$. By the definition of compact containment (and of
$u$) we obtain that $x' \le y_1 + \cdots + y_k$ for some $k$.
\end{rema}

\noindent It is clear that any semigroup that satisfies the strong
Corona Factorization Property also satisfies the Corona Factorization
Property. It was shown in \cite{OPR} that a conical refinement monoid
satisfies the strong Corona Factorization Property if and only if
every ideal of the monoid satisfies the Corona Factorization
Property. It is not clear if this remains true without assuming the
refinement property, but we shall show
(implicit in Theorem~\ref{sCFP-C*-alg}) that this
also holds for semigroups arising as the Cuntz semigroup of a
$\sigma$-unital \Cs. We shall also see, as in the case of \Cs s
with real rank zero, that the Corona Factorization Property defined for
semigroups matches the corresponding property for the \Cs s (see
Section 5).

If $W$ is an algebraically ordered
semigroup, then
the definitions given above for the (strong) Corona Factorization
Property agree with the corresponding ones in \cite{OPR}. The
connection between the two notions is discussed in the remark below.

\begin{rema}\label{rem:int}
{\rm Recall that an \emph{interval} in an ordered abelian semigroup $V$
  is a non-empty subset $I$ of $V$ which is order-hereditary and
  upwards directed. An interval $I$ is said to be \emph{countably
    generated} if there is a sequence $\{x_n\}$ of elements in $I$
  (that can be taken to be increasing) such that $I=\{x\in V\mid x\leq
  x_n\text{ for some }n\}$. 

The addition of two intervals $I$ and $J$ in $V$ is defined to
be 
$$I+J=\{x\in V\mid x\leq y+z\text{ with }y\in I, z\in J\}.$$ 
Denote
by $\Lambda(V)$ the ordered abelian semigroup of all intervals in $V$,
where the order is given by set inclusion, and denote by
$\Lambda_{\sigma}(V)$ the 
subsemigroup of $\Lambda(V)$ whose elements are the countably
generated 
intervals in $V$. Both semigroups are
complete, cf.\ Definition~\ref{def:complete}: 
The supremum of an increasing sequence $\{I_n\}$ of
intervals is $\cup_{n=1}^\infty I_n$. Morever, if $I$ is an interval,
then $[0,x] \ll I$ for every $x \in I$. It follows immediately from this
observation that if $I$ and $J$ are intervals such that $I' \subseteq
J$ for all $I' \ll I$, then $I \subseteq J$.

Now assume that $V$ is algebraically ordered. Then $V$ satisfies
the (strong) Corona Factorization Property (the discrete version
defined in \cite{OPR}) 
if, and only if,
$\Lambda_{\sigma}(V)$ satisfies the
(strong) Corona Factorization Property (the continuous version defined here). 
Let us sketch part of the
arguments needed. Assume that $V$ has the strong Corona Factorization
Property, let $X$, $Y_1,Y_2,\dots$ be elements in
$\Lambda_{\sigma}(V)$, and let
$m\in \N$ be such that $X\subseteq mY_n$ for all $n$. Let $X'\ll
X$ be given. Since $X$ is countably generated by a sequence, say
$\{x_n\}$, that without loss of generality can be assumed to be
increasing, there is $i$ such that $X'\subseteq [0,x_i]$.
It suffices to check that $[0,x_i]\subseteq Y_1+\cdots+Y_l$ for some
$l$. As $x_i$ belongs to $mY_n$ for all $n$, and since each $Y_n$ is
upwards directed, there is $y_n \in Y_n$ such that $x_i \le m y_n$. By
the assumption that $V$ satisfies the strong Corona Factorization
Property, this implies that $x_i\leq
y_1+y_2+ \cdots +y_n$ for some $n$. This again implies
$X'\subseteq Y_1+\cdots + Y_n$, and thus proves that
$\Lambda_{\sigma}(V)$ has the strong
Corona Factorization Property.

In \cite{OPR}, the authors proved that a \Cs{} with real rank zero
has the (strong) Corona Factorization Property if and only if the
projection semigroup $V(A)$ has the corresponding property (discrete
version) for semigroups. Since $V(A)$ is algebraically ordered, it follows that
$\Lambda_{\sigma}(V(A))$ satisfies the (strong) Corona
Factorization Property (continuous version) 
if and only if $V(A)$ satisfies the corresponding property (the
discrete version). In particular,
since for a \Cs{} with real rank zero $A$, one has that $\Cu(A)$ is
order-isomorphic to $\Lambda_{\sigma}(V(A))$---see \cite[Theorem 5.7]{ABP}
---,
our observations yield that, within this class, $V(A)$ has the
(strong) Corona Factorization Property (discrete version) 
if and only if $\Cu(A)$ has the corresponding property (continuous
version), if and only if $A$ has the (strong) Corona Factorization
Property for \Cs s---the latter equivalence follows from the results in
\cite{OPR}.}
\end{rema}

\begin{prop}\label{omega-CFP}
Any complete abelian ordered semigroup, which satisfies the
$\omega$-comparison property, also satisfies the Corona
Factorization Property.
\end{prop}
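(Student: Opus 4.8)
The plan is to deduce the Corona Factorization Property directly from the $\omega$-comparison property by verifying its hypotheses and extracting the conclusion. So suppose $W$ is a complete ordered abelian semigroup with the $\omega$-comparison property, let $\{x_n\}$ be a full sequence in $W$, let $\{y_n\}$ be a sequence in $W$, let $x' \in W$ with $x' \ll x_1$, and let $m \in \N$ be such that $x_n \le m y_n$ for all $n$. We must produce a $k$ with $x' \le y_1 + \cdots + y_k$.

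First I would set $x = x_1$, so that $x' \ll x$, and $x \le m y_n$ for all $n$ (using $x = x_1 \le x_n \le m y_n$, since the sequence $\{x_n\}$ is increasing). The immediate goal is to get into the shape needed for $\omega$-comparison, i.e., to arrange that $x <_s y_j$ for all $j$ — but this is generally false with the $y_n$ themselves, because $x \le m y_n$ only gives domination, not \emph{stable} domination. The key trick will be to pass to partial sums: define $z_j = y_{(j-1)N+1} + y_{(j-1)N+2} + \cdots + y_{jN}$ for a suitable block-length $N$ (to be chosen), so that each $z_j$ is a sum of $N$ consecutive terms of the sequence $\{y_n\}$. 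Then $x \le m y_n \le m z_j$ whenever the $n$'s range includes one of the indices in the $j$-th block; more importantly, $N z_j \ge y_{(j-1)N+1} + \cdots$ summed in a way that, combined with $x \le m y_n$ for \emph{each} $n$ in the block, yields $N x \le m z_j$ — actually better: summing $x \le m y_n$ over the $N$ indices $n$ in the $j$-th block gives $N x \le m z_j$. Now I would choose $N = m+1$, so that $(m+1) x \le (m+1) z_j$, which is not yet $<_s$; instead choose $N$ so that $(k+1)x \le k z_j$ for an appropriate $k$ — concretely, summing $x \le m y_n$ over a block of $N = k$ indices gives $k x \le m z_j$, and if we take $k$ with $k > m$, say $k = m+1$ won't immediately work, but summing over $N$ indices gives $Nx \le m z_j$; picking $N = m+1$ gives $(m+1)x \le m z_j$, i.e., $(m'+1)x \le m' z_j$ with $m' = m$, which is exactly condition (i) of Proposition~\ref{stable_comparison}. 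Hence $x <_s z_j$ for all $j$.

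With $x <_s z_j$ for all $j$ and $x' \ll x$, the $\omega$-comparison property directly yields an $n$ with $x' \le z_1 + z_2 + \cdots + z_n$. But $z_1 + \cdots + z_n = y_1 + y_2 + \cdots + y_{nN}$ by the definition of the blocks, so setting $k = nN$ gives $x' \le y_1 + \cdots + y_k$, which is the desired conclusion. I would also remark at the start that the case where $W$ contains no element $x$ admitting a strict predecessor $x' \ll x$ is vacuous, and that one should double-check the degenerate possibility that $S(W, z_j)$ is empty — but that is handled uniformly by Proposition~\ref{stable_comparison}, whose equivalent condition (i) we verified combinatorially and which does not reference states.

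The main obstacle I anticipate is purely bookkeeping: getting the block-summation inequality $(m+1)x \le m z_j$ exactly right, i.e., being careful that summing the $N$ inequalities $x \le m y_{(j-1)N+1}, \dots, x \le m y_{jN}$ in the positive ordered semigroup indeed yields $N x \le m(y_{(j-1)N+1} + \cdots + y_{jN}) = m z_j$ (this uses only additivity of $+$ and the positivity/compatibility of $\le$ with $+$, which hold in an ordered abelian semigroup), and then matching $N x \le m z_j$ against the template $(k+1)x \le k y$ of Proposition~\ref{stable_comparison}(i) by the choice $N = m+1$, $k = m$. The only subtlety beyond that is remembering that $\omega$-comparison is stated for \emph{complete} semigroups and requires the $y_j$'s to be indexed by a full sequence of natural numbers, which is automatic here since $\{z_j\}_{j\ge 1}$ is an infinite sequence; and that the conclusion of $\omega$-comparison gives an $n$ possibly depending on $x'$, which is fine since the Corona Factorization Property also allows $k$ to depend on $x'$.
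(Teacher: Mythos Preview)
Your proposal is correct and follows essentially the same route as the paper: group the $y_n$'s into consecutive blocks $z_j$ of length $m+1$, sum the inequalities $x_1 \le m y_n$ over a block to obtain $(m+1)x_1 \le m z_j$ (hence $x_1 <_s z_j$), and then invoke $\omega$-comparison to get $x' \le z_1 + \cdots + z_n = y_1 + \cdots + y_{n(m+1)}$. The only cosmetic difference is that the paper keeps the intermediate inequality $(m+1)x_1 \le x_{n(m+1)+1} + \cdots + x_{n(m+1)+m+1}$ before passing to the $y$'s, whereas you first note $x_1 \le m y_n$ and sum directly; the content is identical.
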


\begin{proof} Let $W$ be a complete abelian ordered semigroup with the
  $\omega$-comparison property. Let $\{x_n\}$ be a full sequence in
  $W$, let $\{y_n\}$ be another sequence in $W$, let $x' \in W$, and
  let $m$ be a positive integer such that $x' \ll x_1$ and $x_n \le m
  y_n$ for all $n$. For each integer $n \ge 0$ put
$$z_n = y_{n(m+1)+1}+y_{n(m+1)+2}+\cdots+y_{n(m+1)+m+1}.$$
Then
\begin{eqnarray*}
(m+1)x_1 & \le &
x_{n(m+1)+1}+x_{n(m+1)+2}+\cdots+x_{n(m+1)+m+1} \\
& \le & m\big(y_{n(m+1)+1}+y_{n(m+1)+2}+\cdots+y_{n(m+1)+m+1}\big) =
mz_n,
\end{eqnarray*}
whence $x_1 <_s z_n$ for all $n$. It follows that $x' \le z_0+z_1+
\cdots + z_n$ for some $n$, which entails that $x' \le y_1+y_2+ \dots +
y_{(n+1)(m+1)}$.
\end{proof}

\noindent We shall finally consider the following notion of
$n$-comparison that only involves full elements of the
semigroup. (This shall be appropriate for studying the Corona
Factorization Property for non-simple \Cs s of finite decomposition rank.)

\begin{defi}[Weak $n$- and weak $\omega$-comparison property]
Let $(W,+,\leq)$ be an ordered abelian semigroup. Say that $W$ has
the \emph{weak $n$-comparison} property if whenever $y_0,y_1,\dots,
y_n$ are \emph{full} elements in $W$ and $x \in W$  are such that
$x<_s  y_i$ for all $i$, then $x \leq y_0+y_1+\cdots+ y_n$.

If $W$ is complete, then we say that $W$ has
the \emph{weak $\omega$-comparison} property if whenever $x, x', y_0,y_1,\dots,
$ are elements in $W$  such that $y_0,y_1, \dots$ are
full elements, $x' \ll x$,  and  $x<_s  y_i$ for all $i$, then $x' \leq y_0+y_1+\cdots+ y_n$
for some positive integer $n$.
\end{defi}

\noindent The weak $n$- and the weak $\omega$-comparison properties
only makes sense for
semigroups that contain a full element (if they don't, then they
automatically possess this property). It is clear that if $W$ satisfies
the weak $n$-comparison property for some $n$, then it satisfies the
weak $m$-comparison property for all $m \ge n$ and it satisfies the
weak $\omega$-comparison property.

We show
below that if $W(A)$ has the weak $n$-comparison property then so does
$\Cu(A)$. A new definition and a lemma is required for the proof.

We say that an element $d$ in a \Cs{} $A$ 
is \emph{strictly full} if $(d-\varepsilon)_+$ is
full for some $\varepsilon > 0$, and hence for all sufficiently
small $\varepsilon > 0$. Note that any full projection 
automatically is strictly full. The lemma below characterizes \Cs s
whose primitive ideal space is compact (or, equivalently, \Cs s that
do not contain proper dense algebraic ideals):

\begin{lem}\label{lem_full}
Let $A$ be a $C^*$-algebra such that $A \otimes \cK$ contains a
strictly full positive element. Then every full positive element in
$A$ is strictly full. 
\end{lem}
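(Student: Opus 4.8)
The plan is to translate everything into the Cuntz semigroup and use the characterization of fullness in $\Cu(A)$ together with the way-below relation. Let $c \in (A \otimes \cK)^+$ be a strictly full element, so that $\langle (c-\ep_0)_+ \rangle$ is full in $\Cu(A)$ for some $\ep_0 > 0$; write $e = (c-\ep_0)_+$. Let $a \in A^+$ be full; I must produce $\delta > 0$ with $(a-\delta)_+$ full, equivalently with $\langle (a-\delta)_+ \rangle$ full in $\Cu(A)$. First I would record the reformulation: an element $w$ of a complete ordered abelian semigroup is full precisely when $y' \propto w$ for every $y' \ll y$ and every $y$; and since $\langle (c-\ep_0/2)_+ \rangle \ll \langle c \rangle$, the element $e$ being full forces in particular $\langle (c-\ep_0/2)_+ \rangle \propto \langle e \rangle$, so $e$ and $(c-\ep_0/2)_+$ are ``proportional'' to each other. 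The key point is that a \emph{single} proportionality witness, applied to the proto-example $\langle (b-\ep)_+ \rangle \ll \langle b \rangle$, lets us cut down $b$ slightly and still dominate; this is exactly the $\ep$--$\delta$ mechanism of \cite[Proposition~2.4]{Ro:UHFII} that already underlies Lemma~\ref{ep-delta}.

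Concretely, here is the chain I would run. Since $a$ is full, $\langle e \rangle \propto \langle a \rangle$, i.e.\ $\langle e \rangle \le n \langle a \rangle$ for some $n$, so $e \precsim a \otimes 1_n$. Fix any $\ep \in (0, \ep_0)$; then $(c - \ep)_+ \precsim (e - \ep')_+ \oplus \text{(something)}$ — more usefully, $(c-\ep)_+ \ll c$ in $\Cu$, and $(c-\ep)_+ \precsim e$ actually fails in general, so instead I work with $e$ directly: from $e \precsim a\otimes 1_n$ and \cite[Proposition~2.4]{Ro:UHFII} there is $\delta_1 > 0$ with $(e - \tfrac12)_+ \precsim (a \otimes 1_n - \delta_1)_+ = (a-\delta_1)_+ \otimes 1_n$, and since $(e-\tfrac12)_+$ is again strictly full (it dominates $(c - \ep_0 - \tfrac12 \cdot c\text{-shift})$, or one simply reselects $\ep_0$), we get that $(a-\delta_1)_+ \otimes 1_n$, hence $(a-\delta_1)_+$, is full in $A$; thus $a$ is strictly full with witness $\delta = \delta_1$. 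The one technical wrinkle is making sure the element we land on is itself strictly full rather than merely full — but that is automatic, since $\langle (e-\tfrac12)_+ \rangle$ is full and $\ll \langle (e-\tfrac14)_+ \rangle$, and ``full and compactly contained in something'' is inherited downward, so once $(a-\delta_1)_+ \otimes 1_n$ dominates a full element it is full.

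I would then observe the converse direction of the characterization (that fullness of \emph{every} full positive element forces $A \otimes \cK$ to contain a strictly full element, hence the primitive ideal space to be compact) is not needed for this lemma but is what motivates the phrasing; I will only prove the stated implication. The main obstacle I anticipate is bookkeeping with the cut-downs: one must choose the $\ep$'s and $\delta$'s in the right order so that the strictly-full element one starts from still has a full cut-down after passing through the $\precsim$ obtained from \cite[Proposition~2.4]{Ro:UHFII}, and so that the final $\delta$ genuinely witnesses strict fullness of $a$ (i.e.\ $(a-\delta)_+$ is full, not just $(a-\delta)_+ \otimes 1_n$). Both points reduce to the fact that $b \precsim b \otimes 1_n$ always and that an ideal containing $(a-\delta)_+ \otimes 1_n$ contains $(a-\delta)_+$, so no real difficulty beyond care with the quantifiers.
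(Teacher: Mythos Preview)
Your approach is correct and takes a genuinely different route from the paper's. You work entirely inside the Cuntz semigroup: from $a$ full you get $\langle (c-\ep_0)_+\rangle \propto \langle a\rangle$, then invoke the $\ep$--$\delta$ mechanism of \cite[Proposition~2.4]{Ro:UHFII} (property (b) in the paper's list) to pass to $(a-\delta)_+\otimes 1_n$, and conclude fullness of $(a-\delta)_+$ because it Cuntz-dominates a full element. The paper instead argues purely ideal-theoretically via the \emph{Pedersen ideal}: with $d$ the given full element, the union $\bigcup_{\ep>0} I_\ep$ of closed ideals generated by $(d-\ep)_+$ is a dense algebraic two-sided ideal of $A\otimes\cK$, hence contains the Pedersen ideal, which in turn contains the cut-down $(a-\delta)_+$ of the strictly full element $a$; thus $(a-\delta)_+\in I_\ep$ for some $\ep$, forcing $I_\ep = A\otimes\cK$. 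The paper's route avoids Cuntz-comparison machinery entirely and is perhaps more conceptual; yours meshes naturally with the semigroup framework of the surrounding sections and makes the role of the compact-containment relation explicit.

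One small point of care: your parenthetical ``or one simply reselects $\ep_0$'' is not optional---the choice $\eta=\tfrac12$ does not work as written, since you only know $(c-\ep_0)_+$ is full for the \emph{given} $\ep_0$, not $(c-\ep_0-\tfrac12)_+$. The clean fix is to set $e=(c-\ep_0/2)_+$ from the start (still full, since it dominates $(c-\ep_0)_+$), obtain $e\precsim a\otimes 1_n$, and then cut down by $\ep_0/2$ so that $(e-\ep_0/2)_+=(c-\ep_0)_+$ is exactly the full element you already have in hand.
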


\begin{proof} View $A$ as a full hereditary sub-\Cs{} of its
  stabilization $A \otimes \cK$. Let $a$ be a strictly full positive
  element in $A \otimes \cK$ and choose $\delta >0$ such that
  $(a-\delta)_+$ is full in $A \otimes \cK$. Let $d$ be an arbitrary 
  full positive element in
  $A$. For each $\ep >0$ consider the closed two-sided ideal
  $I_\ep$ of $A \otimes \cK$ generated by $(d-\ep)_+$. The closure of
  $\bigcup_{\ep > 0} I_\ep$ is a closed two-sided ideal in $A \otimes
  \cK$ which contains $d$ and hence is equal to $A \otimes \cK$. 
  Consequently, $\bigcup_{\ep > 0} I_\ep$ is a dense (algebraic) ideal
  in $A \otimes \cK$, which therefore contains the Pedersen ideal of
  $A \otimes \cK$, which again contains $(a-\delta)_+$. It follows
  that $(a-\delta)_+$ belongs to $I_\ep$ for some $\ep >0$, whence
  $I_\ep = A \otimes \cK$, which 
  again implies that $(d-\ep)_+$ is full (in $A \otimes \cK$ and hence
  in $A$).
\end{proof}

If $A$ is as above, if $x \in \Cu(A)$ is a full element, and if
$\{x_k\}$ is an increasing sequence in $\Cu(A)$ with $x = \sup_k x_k$,
then $x_k$ is full for all sufficiently large $k$. Indeed, there is a
(necessarily strictly full) positive element $a$ in $A \otimes \cK$
such that $x = \langle a \rangle$. Find $\ep > 0$ such that
$(a-\ep)_+$ is (strictly) full. It follows that $x' = \langle
(a-\ep)_+ \rangle$ is full. As $x' \ll x$ we have $x' \le y_k$ for all
large enough $k$, and an element is full if it dominates a full
element.

\begin{lem} \label{wn-comp:W-Cu}
Let $A$ be a \Cs, and assume that
$A \otimes \cK$ contains a strictly full element and that $W(A)$ has the
weak $n$-comparison property. Then $\Cu(A)$ has the weak
$n$-comparison property.
\end{lem}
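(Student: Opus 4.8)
The plan is to mimic, essentially verbatim, the proof of Lemma~\ref{n-comp:W-Cu}; the only genuinely new ingredient is that one must be able to choose the $W(A)$-approximants of the full elements $y_j$ to be \emph{full} as well, and this is exactly what the hypothesis ``$A\otimes\cK$ contains a strictly full element'' provides, via Lemma~\ref{lem_full} and the observation recorded immediately before the statement. So suppose $x,y_0,y_1,\dots,y_n\in\Cu(A)$ are given with each $y_j$ full and $x<_s y_j$ for all $j$; we must show $x\le y_0+y_1+\cdots+y_n$. Since $x$ is the supremum of an increasing sequence of elements of $W(A)$ each compactly contained in $x$, property~(a) above reduces the task to proving that $x'\le y_0+\cdots+y_n$ for every $x'\in W(A)$ with $x'\ll x$.

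Fix such an $x'$, and write each $y_j$ as the supremum of an increasing sequence $\{y_j^{(k)}\}_k$ of elements of $W(A)$. We coordinate a single index $k$ meeting two requirements. On the one hand, exactly as in the proof of Lemma~\ref{n-comp:W-Cu}, property~(c) above yields an index $k_1$ such that $x'<_s y_j^{(k_1)}$ for every $j$. On the other hand, since $A\otimes\cK$ contains a strictly full element and each $y_j$ is full, Lemma~\ref{lem_full} together with the observation preceding this lemma shows that $y_j^{(k)}$ is full for all sufficiently large $k$; pick $k_2$ so that this holds for every $j$ whenever $k\ge k_2$. Put $k=\max(k_1,k_2)$. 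Then, for every $j$, the element $y_j^{(k)}$ is full and $x'<_s y_j^{(k)}$.

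Now $x'$ and all the $y_j^{(k)}$ lie in $W(A)$. Since $W(A)\hookrightarrow\Cu(A)$ is an order embedding, the relations $x'<_s y_j^{(k)}$ hold already in $W(A)$; and fullness of $y_j^{(k)}$ is likewise intrinsic, since in either semigroup it amounts to the representing positive element generating $A\otimes\cK$ as a closed two-sided ideal. The weak $n$-comparison property of $W(A)$ therefore gives $x'\le y_0^{(k)}+y_1^{(k)}+\cdots+y_n^{(k)}$ in $W(A)$, and hence $x'\le y_0+y_1+\cdots+y_n$ in $\Cu(A)$, which finishes the proof.

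The only delicate point — and the sole place where the standing hypothesis on $A$ is used — is the simultaneous choice of the index $k$: one needs the $W(A)$-approximants of the $y_j$ to be full (so that the weak $n$-comparison hypothesis can be applied to them) at the same index at which they stably dominate $x'$. Fullness of these approximants is real content, because without the assumption that $A\otimes\cK$ contains a strictly full element a full element of $\Cu(A)$ need not be a supremum of full elements of $W(A)$; Lemma~\ref{lem_full} is precisely what rules this out.
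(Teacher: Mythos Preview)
Your proof is correct and follows essentially the same approach as the paper's own proof: both mimic the argument of Lemma~\ref{n-comp:W-Cu}, using Lemma~\ref{lem_full} (and the remark following it) to ensure the $W(A)$-approximants $y_j^{(k)}$ are full for large $k$, and then applying the weak $n$-comparison hypothesis in $W(A)$. Your version is simply more explicit about coordinating the index $k$ and about why fullness transfers between $W(A)$ and $\Cu(A)$.
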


\begin{proof} We proceed exactly as in the proof of
  Lemma~\ref{n-comp:W-Cu}, but now have that the $y_j$'s in that proof
  are full in $\Cu(A)$. It 
follows from Lemma~\ref{lem_full} above (and the remarks below it) that the
$y_j^{(k)}$'s (from the proof of Lemma~\ref{n-comp:W-Cu})
are full (in $\Cu(A)$ and therefore also in $W(A)$) 
for all large enough $k$. We therefore obtain the desired conclusion
as in the proof of Lemma~\ref{n-comp:W-Cu}.
\end{proof}

\begin{prop}\label{weak_CFP}
Any complete abelian ordered semigroup, which satisfies the weak
$\omega$-com\-pa\-ri\-son property and which contains a full
element that is compactly contained in another (full) element, also
satisfies the Corona Factorization Property.
\end{prop}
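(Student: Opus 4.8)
The plan is to run the proof of Proposition~\ref{omega-CFP} essentially verbatim, the one extra thing to verify being that the auxiliary elements $z_j$ produced there are \emph{full}, so that the weak $\omega$-comparison property can be invoked in place of the ($\omega$-)comparison property used there. So suppose $\{x_n\}$ is a full sequence in $W$, $\{y_n\}$ is another sequence, $x'\in W$, and $m$ is a positive integer with $x'\ll x_1$ and $x_n\leq my_n$ for all $n$; the goal is to produce $k$ with $x'\leq y_1+\cdots+y_k$.

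The first thing I would do is exploit the extra hypothesis. Pick a full element $e$ and an element $e'$ with $e\ll e'$; then $e\leq e'$, so $e'$ is full as well. Since $\{x_n\}$ is a \emph{full sequence} and $e\ll e'$, there is an $N$ with $e\propto x_N$, say $e\leq\ell x_N$. As $e\leq \ell x_N\leq \ell x_n$ for every $n\geq N$ and $e$ is full, it follows that $x_n$ is full for all $n\geq N$ (using the elementary facts that an element dominating a full element is full, and that $\propto$ composes with fullness). Then, from $x_n\leq my_n$, i.e.\ $x_n\propto y_n$, together with the fullness of $x_n$, one concludes that $y_n$ is full for all $n\geq N$.

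Next, exactly as in Proposition~\ref{omega-CFP}, for $j\geq 0$ put $z_j=y_{j(m+1)+1}+y_{j(m+1)+2}+\cdots+y_{j(m+1)+m+1}$; the computation carried out there shows $(m+1)x_1\leq mz_j$ and hence (by Proposition~\ref{stable_comparison}) $x_1<_s z_j$, for every $j$. Now choose $N_0$ large enough that $N_0(m+1)+1\geq N$. For $j\geq N_0$ every summand of $z_j$ is one of the full elements $y_n$ with $n\geq N$, so $z_j$ is itself full, since a sum one of whose summands is full is full (by positivity of $W$). Thus $(z_{N_0},z_{N_0+1},z_{N_0+2},\dots)$ is a sequence of full elements with $x_1<_s z_j$ for each of them, and $x'\ll x_1$. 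Applying the weak $\omega$-comparison property to this data yields an $n$ with $x'\leq z_{N_0}+z_{N_0+1}+\cdots+z_{N_0+n}$. Since these consecutive blocks involve precisely the terms $y_{N_0(m+1)+1},\dots,y_{(N_0+n+1)(m+1)}$, positivity of $W$ gives $x'\leq y_1+y_2+\cdots+y_k$ with $k=(N_0+n+1)(m+1)$, which is the required conclusion.

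The only step requiring any thought is the first one: the hypothesis that $W$ contain a full element compactly contained in another element is exactly what forces a full sequence to have all but finitely many of its terms full, and this is indispensable, because a finite sum of the $y_j$'s need not be full and so cannot otherwise be fed into the weak comparison property. Everything else is index bookkeeping together with the routine permanence properties of full elements, so I do not anticipate a genuine obstacle.
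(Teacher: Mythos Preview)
Your proof is correct and follows essentially the same route as the paper: both use the full element $e\ll e'$ (the paper calls it $v\ll w$) together with the definition of a full sequence to conclude that $y_n$ is full for all sufficiently large $n$, and then feed the resulting full blocks $z_j$ into the weak $\omega$-comparison property exactly as in Proposition~\ref{omega-CFP}. The only cosmetic difference is that the paper truncates the sequences $\{x_n\}$ and $\{y_n\}$ at the outset so that \emph{all} $z_j$ are full, whereas you keep the original indexing and apply weak $\omega$-comparison to the tail $(z_{N_0},z_{N_0+1},\dots)$, recovering the bound $x'\le y_1+\cdots+y_k$ via positivity; this is pure bookkeeping and not a substantive distinction.
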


\begin{proof} Let
$W$ be a complete abelian ordered semigroup with the weak
  $\omega$-comparison property. By assumption there are elements $v \ll w$
  in $W$  such that $v$ is full.

Let $\{x_k\}$ be a full sequence in
  $W$, let $\{y_k\}$ be another sequence in $W$, let $x' \in W$, and
  let $m$ be a positive integer such that $x' \ll x$ and $x_k \le m
  y_k$ for all $k$. By the definition of a full sequence (applied to $v\ll w$) we have that
  $v \propto x_k$ for all large enough $k$. Hence $v \propto y_k$ for
  all large enough $k$, whence $y_k$ is full whenever $k$ is large
  enough. Upon removing the first finitely many elements from the
  sequences $\{x_k\}$ and $\{y_k\}$ we can assume that all $y_k$ are
  full.

Let now $z_0,z_1, z_2, \dots$ be as in the proof of Proposition~\ref{omega-CFP}
above. Then $z_0,z_1, z_2, \dots$ are full and $x_1 <_s z_j$ for all
$j$. It follows that
$$x' \le z_0+z_1+ \cdots + z_n = y_1+y_2+ \cdots + y_{(n+1)(m+1)}$$
for some $n$, whence $W$ has the Corona Factorization Property.
\end{proof}

\noindent In conclusion, we have defined the following comparability
properties of a complete ordered abelian semigroup, listed in decreasing
strength: $0$-comparison (which is
the same as being almost unperforated), $1$-comparison,
$2$-comparison, $\dots$, $\omega$-comparison, the strong Corona
Factorization Property, and the Corona Factorization Property for
semigroups. Moreover, we have defined weak
$n$- and the weak $\omega$-comparison properties.
We show below that the comparison properties above are in fact
strictly decreasing in strength. An example a complete abelian ordered
semigroup that has the strong Corona 
Factorization Property but not the $\omega$-comparison property is
constructed in \cite{BoRo}.
(That the strong Corona Factorization Property is strictly stronger
than the Corona
Factorization Property was already noted in \cite{OPR}.)

\begin{exem} Let $n$ be a positive integer, let $W_n$ be the
  subsemigroup of $\Z^+$ generated by $\{0,n+1,n+2\}$, and equip $W_n$
  with the algebraic order. Notice that
  $$(n+1)(n+2)-(n+1)-(n+2) = n^2+n-1$$
is the largest natural number that does not belong to
  $W_n$. Suppose that $x,y_0,y_1, \dots, y_n$ belong to $W_n$, that
  $x <_s y_j$ for all $j$, and that $x$ is non-zero.  Then all $y_j$'s
  are non-zero, whence
$$y_0+y_1+ \cdots + y_n - x \ge y_1+y_2+ \dots + y_n \ge n(n+1),$$
(where the ordering above is the usual one in $\Z$), which shows that
$x \le y_0+y_1+ \dots + y_n$ (with respect to the order given on
$W_n$). Hence $W_n$ has the $n$-comparison property.

On the other hand, if we take $x=n+1$ and $y_0=y_1= \dots = y_{n-1} =
n+2$, then $x <_s y_j$ for all $j$, but
$$y_0+y_1+ \dots + y_{n-1}-x = n(n+2)-(n+1) = n^2+n-1,$$
which does not belong to $W_n$. Hence $x \nleq y_0+y_1+ \cdots
+y_{n-1}$ in $W_n$, whence $W_n$ does not have the $(n-1)$-comparison
property.

Next, put $W_\omega = \bigoplus_{n=1}^\infty W_n$ (as an ordered
abelian semigroup). Then $W_\omega$ has the $\omega$-comparison
property, but does not have the $n$-comparison property for any finite
$n$. Indeed, suppose that $x,y^0,y^1, \dots$ in $W_\omega$ are such
that $x <_s y^j$ for all $j$. Write $x = (x_1,x_2, \dots)$ and
$y^j = (y^j_1,y^j_2, \dots)$, with $x_k$ and $y^j_k$ in $W_k$ for all $k$.
Then $x_k = 0$ for all $k$ greater than some $k_0 \in \N$.
Since $W_k$ has $k_0$-comparison when $k \le k_0$, we have
$$x_k \le y^0_k + y^1_k+ \cdots +y^{k_0}_k$$
(in $W_k$) for all $k$, whence $x \le y^0+y^1+ \cdots +y^{k_0}$ (in
$W_\omega$).

Conversely, given a positive integer $n$, choose $x',y'_0,y'_1, \dots,
y'_n$ in $W_{n+1}$ such that $x' <_s y'_j$ for all $j$, and such that
$x' \nleq y'_0+y'_1+ \cdots + y'_n$. (This is possible because
$W_{n+1}$ does not have the $n$-comparison property.)
Let $x,y_0,y_1, \dots, y_n$ in
$W_\omega$ be the elements whose coordinates in the $(n+1)$ position are, respectively,
$x',y'_0,y'_1, \dots, y'_n$, and whose other coordinates
are zero. Then $x <_s y_j$ for all $j$ while $x \nleq y_0+y_1+ \cdots
+ y_n$. Hence $W_\omega$ does not have the $n$-comparison property.
\end{exem}

\section{The Corona Factorization Property for \Cs s with finite
  decomposition rank} \label{sec:CFP}

\noindent
We have already mentioned the result, \cite[Lemma
6.1]{TW}, of Toms and Winter which implies that the Cuntz semigroup
of a simple unital separable \Cs{} has $n$-comparison property. We wish to
extend this result to the non-simple case, and state for this
purpose a lemma whose proof actually is contained in the proof of
\cite[Lemma 6.1]{TW} (follow that proof from Equation (10) to its
end) and therefore is omitted.

\begin{lem}[Toms and Winter] \label{TW_nonsimple}
Let $A$ be a separable $C^*$-algebra with finite decomposition rank
$n$. Suppose that $a,d_0,\ldots, d_n \in A^+$ and $\alpha>0$ satisfy
$$\forall \tau \in T(A): \quad d_\tau(a) < d_\tau(d_i)-\alpha,$$
where $d_\tau$ is the dimension function associated to $\tau$ as in
\eqref{eq:d_tau}. 
Then $a\precsim d_0 \oplus d_1 \oplus \cdots\oplus d_n$.
\end{lem}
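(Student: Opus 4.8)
The plan is to reproduce the relevant portion of the Toms--Winter argument in \cite[Lemma~6.1]{TW}, observing that once the uniform gap $\alpha$ is \emph{assumed} (rather than extracted from compactness of the tracial simplex of a unital algebra), neither simplicity nor unitality of $A$ is used. First I would reduce the conclusion $a \precsim d_0 \oplus d_1 \oplus \cdots \oplus d_n$ to proving $(a-\ep)_+ \precsim d_0 \oplus d_1 \oplus \cdots \oplus d_n$ for each fixed $\ep > 0$, which is legitimate since $a \precsim b$ holds if and only if $(a-\ep)_+ \precsim b$ for all $\ep > 0$.

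Next I would invoke finiteness of the decomposition rank: for a suitable finite set $\mathcal{F} \subseteq A$ (containing $a$, the $d_i$, and finitely many elements obtained from them by functional calculus) and a sufficiently small tolerance $\ep' > 0$, there is a completely positive contractive approximation $(F,\psi,\varphi)$ with $F$ finite-dimensional, $\psi \colon A \to F$ approximately multiplicative on $\mathcal{F}$ to within $\ep'$, $\|\varphi\psi(x) - x\| < \ep'$ for all $x \in \mathcal{F}$, and $\varphi = \varphi^{(0)} + \cdots + \varphi^{(n)}$ where $F = F^{(0)} \oplus \cdots \oplus F^{(n)}$ and each $\varphi^{(i)} \colon F^{(i)} \to A$ is order zero. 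Taking $\ep'$ small relative to $\ep$ and using the elementary fact that $x + y \precsim x \oplus y$ for positive $x,y$, one obtains $(a-\ep)_+ \precsim \varphi^{(0)}\psi(a) \oplus \varphi^{(1)}\psi(a) \oplus \cdots \oplus \varphi^{(n)}\psi(a)$.

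It then remains to prove $\varphi^{(i)}\psi(a) \precsim d_i$ for each $i$, which is the heart of the matter. Here I would combine two ingredients. Since $\varphi^{(i)}$ is order zero it preserves Cuntz subequivalence, so comparisons of elements of the form $\varphi^{(i)}(b)$ reduce to comparisons inside the finite-dimensional algebra $F^{(i)}$, where Cuntz comparison is governed by the ranks in each matrix block. On the other hand, the hypothesis $d_\tau(a) < d_\tau(d_i) - \alpha$ can be transported through the approximation: using that $\psi$ is approximately multiplicative one controls $d_\tau(\varphi^{(i)}\psi(a))$ by a blockwise rank estimate for $\psi(a)$ restricted to $F^{(i)}$, while the $\ep'$-closeness of $\varphi\psi(d_i)$ to $d_i$, together with the slack $\alpha$, keeps this estimate strictly below what $d_i$ permits; the finite-dimensional comparison then gives $\varphi^{(i)}\psi(a) \precsim d_i$. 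Concatenating, $(a-\ep)_+ \precsim \bigoplus_{i=0}^n \varphi^{(i)}\psi(a) \precsim \bigoplus_{i=0}^n d_i$, and letting $\ep \to 0$ completes the proof.

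The main obstacle is precisely this last step: making rigorous how a strict inequality of dimension functions becomes a Cuntz subequivalence after passing through a completely positive approximation. This is the technical core of \cite[Lemma~6.1]{TW}; it relies on the structure theory of order zero maps, on the lower semicontinuity and trace compatibility of dimension functions, and --- crucially --- on the uniform gap $\alpha$, which is exactly what provides enough room to absorb all the approximation errors. Since that argument never uses simplicity or unitality of $A$, it carries over verbatim, which is why the proof is omitted here.
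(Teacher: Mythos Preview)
Your proposal is correct and matches the paper's approach: the paper omits the proof entirely, noting only that it is contained in the proof of \cite[Lemma~6.1]{TW} (from their Equation~(10) to the end), which is precisely the argument you have sketched. Your key observation---that the hypothesized uniform gap $\alpha$ replaces the compactness-of-$T(A)$ step and thereby removes any need for simplicity or unitality---is exactly the point.
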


\begin{prop}\label{lema_princ}
Let $A$ be a separable, unital $C^*$-algebra with decomposition rank
$n < \infty$. Then $W(A)$ and $\Cu(A)$ have the weak $n$-comparison
property. 
\end{prop}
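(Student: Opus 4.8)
The plan is to reduce, as in the proof of Proposition~\ref{prop-tw}, to a statement about $W(A)$ and then invoke the non-simple Toms--Winter estimate in Lemma~\ref{TW_nonsimple}, being careful about the strict-versus-nonstrict passage between dimension-function inequalities and states normalized at an element. First I would observe that by Lemma~\ref{wn-comp:W-Cu} it suffices to verify the weak $n$-comparison property for $W(A)$; for this I should note that $A$ (being unital) certainly has a strictly full element in $A\otimes\cK$, so the hypothesis of Lemma~\ref{wn-comp:W-Cu} is met, and moreover by Lemma~\ref{lem_full} "full" and "strictly full" coincide for positive elements of $A$.

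Next, given $x,y_0,\dots,y_n$ in $W(A)$ with each $y_i$ full and $x<_s y_i$ for all $i$, I would pass to a matrix algebra over $A$ (which changes neither the decomposition rank nor fullness properties) to write $x=\langle a\rangle$ and $y_i=\langle d_i\rangle$ with $a,d_0,\dots,d_n\in A^+$, and with each $d_i$ strictly full. By Proposition~\ref{stable_comparison} the relation $x<_s y_i$ gives $f(x)<f(y_i)$ for every $f\in S(W(A),y_i)$. The key translation step is: since $d_i$ is strictly full, a suitable $(d_i-\eta)_+$ is full, so any dimension function $d_\tau$ coming from $\tau\in T(A)$ takes a strictly positive, bounded-below value on $d_i$; after normalizing, $d_\tau$ becomes a state normalized at $\langle d_i\rangle$, giving $d_\tau(a)<d_\tau(d_i)$ for every $\tau\in T(A)$. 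The main obstacle is upgrading this to the uniform strict inequality $d_\tau(a)<d_\tau(d_i)-\alpha$ with a single $\alpha>0$ required by Lemma~\ref{TW_nonsimple}; this is where I expect the real work. I would handle it by using the numerical content of $x<_s y_i$ more quantitatively: from $(k+1)\langle a\rangle\le k\langle d_i\rangle$ (Proposition~\ref{stable_comparison}(i)) one gets $d_\tau(a)\le \tfrac{k}{k+1}d_\tau(d_i)$, and combining this with a uniform positive lower bound $\inf_{\tau\in T(A)}d_\tau(d_i)>0$ (which follows from strict fullness of $d_i$, as $d_\tau((d_i-\eta)_+)$ is a lower-semicontinuous, strictly positive function on the compact set $T(A)$, hence bounded below) yields $d_\tau(a)\le d_\tau(d_i)-\tfrac{1}{k+1}\inf_\tau d_\tau(d_i)$; taking $\alpha$ to be a shade smaller than the minimum over $i$ of these quantities does the job.

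Once the uniform estimate $\forall\tau\in T(A):\ d_\tau(a)<d_\tau(d_i)-\alpha$ is in hand for all $i=0,\dots,n$, Lemma~\ref{TW_nonsimple} applies directly (using separability and $\mathrm{dr}(A)=n<\infty$) to give $a\precsim d_0\oplus d_1\oplus\cdots\oplus d_n$, i.e.\ $x\le y_0+y_1+\cdots+y_n$ in $W(A)$. This establishes the weak $n$-comparison property for $W(A)$, and hence, via Lemma~\ref{wn-comp:W-Cu}, for $\Cu(A)$ as well, completing the proof. A minor subtlety to check along the way is that $T(A)$ is nonempty when $A$ is unital with finite decomposition rank (decomposition rank finite forces quasidiagonality, hence the existence of tracial states), so that the dimension functions are actually available; if $T(A)$ were empty the hypothesis of Proposition~\ref{stable_comparison}(iii) would just say $x\propto y_i$, but in the unital finite-decomposition-rank setting this case does not arise.
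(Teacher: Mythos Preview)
Your proof is correct and follows essentially the same approach as the paper's: reduce to $W(A)$ via Lemma~\ref{wn-comp:W-Cu}, use $(k+1)x\le ky_i$ to get $d_\tau(a)\le\tfrac{k}{k+1}d_\tau(d_i)$, produce a uniform positive lower bound on $d_\tau(d_i)$, and apply Lemma~\ref{TW_nonsimple}. The only minor difference is in how that lower bound is obtained: the paper uses fullness of $y_i$ directly to write $\langle 1_A\rangle\le Ny_i$, which immediately gives $d_\tau(d_i)\ge 1/N$ for every $\tau\in T(A)$ and avoids the detour through strict fullness and lower semicontinuity on $T(A)$ (and also makes the nonemptiness of $T(A)$ a non-issue, since the hypothesis of Lemma~\ref{TW_nonsimple} is vacuous when $T(A)=\varnothing$).
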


\begin{proof}
By Lemma~\ref{wn-comp:W-Cu} it suffices to  prove the proposition for
$W(A)$.  

Let $x,y_0,\ldots,y_n\in W(A)$ with $y_i$ full and $x<_s y_i$ be
given for every $i$. Then, by Proposition \ref{stable_comparison},
there exists $k$ such that $(k+1)x \le ky_i$ for all $i$.

As $y_0,\ldots, y_n$ are full, there is a natural number $N$ such that
$\langle 1_A 
\rangle\leq Ny_i$ for all $i$. Choose $0<\alpha'<(k+1)^{-1}$ and let
$\alpha=\alpha'/N$. Let now $f\in S(W(A),\langle 1_A \rangle)$. We
then have that $1\leq Nf(y_i)$ for all $i$, whence
$\alpha<f(y_i)/(k+1)$. Therefore: 
\[ 
f(x)\leq \frac{k}{k+1}f(y_i)=f(y_i)-\frac{f(y_i)}{k+1}<f(y_i)-\alpha\,,
\]
for all $i$.

In particular,  $d_\tau(x) < d_\tau(y_i) -
\alpha$ for all $i$ and for every tracial state $\tau$ on $A$

Finite decomposition rank passes to matrices, so upon replacing $A$
with a matrix algebra over $A$, we can suppose that there exist
positive elements $a$ and $d_0,d_1,\ldots,d_n$ in $A$, with $d_i$
full, such that $x=\langle a \rangle$ and $y_i=\langle d_i \rangle$
for all $i$. Then $d_\tau(a) < d_\tau(d_i) - \alpha$ for all $i$
and for all tracial states $\tau$ on $A$. Lemma~\ref{TW_nonsimple}
then implies that $a \precsim d_0 \oplus d_1 \oplus \cdots \oplus
d_n$, which again implies that $x \le y_0+y_1+ \cdots + y_n$ as
desired.
\end{proof}

\noindent Combining Proposition ~\ref{weak_CFP} and
Proposition~\ref{lema_princ} we get:

\begin{corol} \label{cor:fd}
Let $A$ be a separable, unital $C^*$-algebra with finite decomposition
rank. Then $\Cu(A)$
has the Corona Factorization Property.
\end{corol}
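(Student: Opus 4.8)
The plan is to deduce Corollary~\ref{cor:fd} by verifying the hypotheses of Proposition~\ref{weak_CFP} for $W = \Cu(A)$. Proposition~\ref{weak_CFP} requires two things: that the semigroup have the weak $\omega$-comparison property, and that it contain a full element which is compactly contained in another (full) element. The first is immediate from Proposition~\ref{lema_princ}, which gives that $\Cu(A)$ has the weak $n$-comparison property (since $A$ is separable, unital, of finite decomposition rank $n$), and the weak $n$-comparison property implies the weak $\omega$-comparison property, as noted in the text following the definition. So the only real work is to produce the pair $v \ll w$ with $v$ full.

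For the second hypothesis, I would use the unit. Since $A$ is unital, $\langle 1_A \rangle$ is an order unit in $\Cu(A)$, hence a full element. To get an element compactly contained in a full element, take the proto-example of compact containment recalled in the excerpt: for any positive $a \in A \otimes \cK$ and any $\ep > 0$ we have $\langle (a-\ep)_+ \rangle \ll \langle a \rangle$ in $\Cu(A)$. Applying this with $a = 1_A$ (viewed inside $A \otimes \cK$), put $w = \langle 1_A \rangle$ and $v = \langle (1_A - \ep)_+ \rangle = \langle 1_A \rangle = w$ for $0 < \ep < 1$, so in fact $v = w \ll w$ and $v$ is full. More robustly, one can invoke Lemma~\ref{lem_full}: a unital \Cs{} certainly has compact primitive ideal space (equivalently $A \otimes \cK$ contains a strictly full element, namely any $(1_A - \ep)_+$ with $\ep$ small), so $1_A$ is strictly full, meaning $(1_A - \ep)_+$ is full for small $\ep > 0$; this again exhibits a full element $v = \langle(1_A-\ep)_+\rangle$ with $v \ll \langle 1_A\rangle$. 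Either way the hypothesis of Proposition~\ref{weak_CFP} is met.

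With both hypotheses of Proposition~\ref{weak_CFP} verified, that proposition yields directly that $\Cu(A)$ has the Corona Factorization Property, which is the assertion of Corollary~\ref{cor:fd}.

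I do not anticipate a genuine obstacle here: the corollary is essentially a bookkeeping step that assembles Proposition~\ref{lema_princ} (the hard technical input, resting on the Toms--Winter estimate Lemma~\ref{TW_nonsimple}) with the formal implication of Proposition~\ref{weak_CFP}. The only point requiring a moment's care is confirming that $\Cu(A)$ does contain the required full element compactly contained in another element; but unitality of $A$ makes this trivial, since $\langle 1_A \rangle$ is an order unit and order units are full, and compact containment of (a translate of) $1_A$ below $\langle 1_A \rangle$ is the standard proto-example. So the proof is short: cite Proposition~\ref{lema_princ} for weak $n$-comparison (hence weak $\omega$-comparison), observe the full-element condition via the unit, and apply Proposition~\ref{weak_CFP}.
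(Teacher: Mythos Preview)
Your proposal is correct and matches the paper's approach exactly: the paper states the corollary as an immediate combination of Proposition~\ref{weak_CFP} and Proposition~\ref{lema_princ}, and you have simply spelled out the one detail left implicit there, namely that unitality of $A$ supplies the required full element $\langle 1_A\rangle \ll \langle 1_A\rangle$ in $\Cu(A)$.
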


\section{Stability of $C^*$-algebras}
\label{sec:stable}

\noindent We show in this section that a \Cs{} whose Cuntz
semigroup has the $\omega$-comparison property is stable if and only
if it has no unital quotient and no bounded 2-quasitrace. We
introduce a property (S) of a \Cs{} that we show is equivalent to
having no non-zero unital quotients and no bounded 2-quasitraces.

It was shown in \cite{HR} that a separable \Cs{} $A$ is
stable if and only if to every $a \in F(A)$ there exists $b \in A^+$
such that $a \perp b$ and $a \precsim b$. (The set $F(A)$ consists of
all positive elements $a$ in $A$ for which $a = ae$ for some positive
element $e$ in $A$ (that can be taken to be a contraction).)
We shall here consider a
weaker version of this condition, where we replace the
relation $a\precsim b$ with the relation $a \prec_s b$ considered in
Section~\ref{sec:semigroups}.

\begin{defi}
A $C^*$-algebra $A$ is said to have \emph{property (S)} if for every
$a\in F(A)$ there exists $b\in A^+$ such that $a\perp b$ and
$a\prec_s b$.
\end{defi}

\noindent It follows immediately from the definition, the
results from \cite{HR} quoted above, and from Remark~\ref{rem:almunp},
that if $A$ is a separable \Cs{}
for which $\Cu(A)$ is almost unperforated, then $A$ has property (S)
if and only if $A$ is stable. It is easy to see that every stable
\Cs{} has property (S).

\begin{lem}\label{lema00}
Let $A$ be a separable $C^*$-algebra with property (S). Then $A$ has
no non-zero unital quotients.
\end{lem}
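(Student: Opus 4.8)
The plan is to argue by contraposition: suppose $A$ has a non-zero unital quotient $A/I$, with quotient map $\pi \colon A \to A/I$, and produce an element $a \in F(A)$ witnessing the failure of property (S). The natural candidate is to lift the unit: choose $a \in A^+$ a contraction with $\pi(a) = 1_{A/I}$. Since $A$ is separable, $I$ is $\sigma$-unital, so there is a strictly positive element $h \in I^+$; replacing $a$ by something of the form $a + \lambda h$ and rescaling (or more cleanly, using that $\pi(a) = 1$ means $a - a^2 \in I$) one arranges that $a \in F(A)$ — that is, $a = ae$ for some positive contraction $e \in A$. The point of requiring $a=ae$ is harmless here; what matters is that $\pi(a)=1_{A/I}$.

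Now suppose, for contradiction, that property (S) supplies $b \in A^+$ with $a \perp b$ and $a \prec_s b$. First I would exploit orthogonality: $a \perp b$ forces $\pi(a) \perp \pi(b)$ in $A/I$, i.e.\ $1_{A/I}\,\pi(b) = 0$, so $\pi(b) = 0$ and hence $b \in I$. Next I would push the relation $a \prec_s b$ through $\pi$. By definition of $\prec_s$ (Proposition~\ref{stable_comparison}(i) applied in $\Cu(A)$), there is $k \in \mathbb{N}$ with $(k+1)\langle a\rangle \le k\langle b\rangle$ in $\Cu(A)$, i.e.\ $a\otimes 1_{k+1} \precsim b \otimes 1_k$ in $A\otimes\cK$. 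Applying the (completely positive, hence Cuntz-subordination-preserving) map $\pi\otimes\mathrm{id}$ gives $\pi(a)\otimes 1_{k+1} \precsim \pi(b)\otimes 1_k = 0$ in $(A/I)\otimes\cK$, so $\pi(a) \precsim 0$, forcing $\pi(a) = 0$. This contradicts $\pi(a) = 1_{A/I} \neq 0$ (here we use that $A/I$ is non-zero). Hence no such $b$ exists, so $A$ does not have property (S), completing the contrapositive.

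The main obstacle, and the only step needing care, is the first one: producing an honest element of $F(A)$ that maps onto the unit of the quotient. Merely picking a positive contraction lifting $1_{A/I}$ need not lie in $F(A)$. The fix uses separability of $I$: pick a strictly positive $h \in I$ and a positive contraction $a_0$ lifting $1_{A/I}$; then for a suitable function $f$ vanishing at $0$, or by a standard functional-calculus argument, one checks $(a_0 - \varepsilon)_+$ or a modification thereof is of the form $ae$ with $\pi(a) = 1_{A/I}$ still (since $\pi((a_0-\varepsilon)_+) = (1-\varepsilon)_+ 1_{A/I}$ is invertible in the unital $A/I$, and we can rescale). Once $a \in F(A)$ with $\pi(a)$ invertible in $A/I$ is in hand, the rest of the argument above goes through verbatim, since an invertible positive element $c$ in a unital algebra satisfies $c \precsim 0 \Rightarrow c = 0$, which is absurd. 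Everything else is routine: orthogonality and Cuntz subordination are preserved by quotient maps (and by amplification), which is all that is invoked.
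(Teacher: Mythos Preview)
Your proposal is correct and follows essentially the same approach as the paper: lift the unit of $A/I$, apply functional calculus to land in $F(A)$, obtain $b$ from property (S), observe $b\in I$ from orthogonality with the unit, and conclude $a\in I$ (equivalently $\pi(a)=0$). The paper handles the $F(A)$ step more cleanly---replace the lift $e$ by $g(e)$ for continuous $g\colon\R^+\to[0,1]$ vanishing on $[0,1/2]$ with $g(1)=1$, so that $g(e)\in F(A)$ and $\pi(g(e))=g(1_{A/I})=1_{A/I}$ exactly---rendering your detour through separability of $I$, the strictly positive $h$, and rescaling unnecessary.
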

\begin{proof}
Let $I$ be an ideal of $A$ such that $A/I$ is unital. Let $e+I$ be
the unit of $A/I$, with $e\in A^+$. Upon replacing $e$ with $g(e)$,
where $g \colon \R^+ \to [0,1]$ is a continuous function which
vanishes on, say $[0,1/2]$, and with $g(1)=1$, we can assume that
$e\in F(A)$. By the assumption that $A$ has property (S) there
exists $b\in A^+$ such that  $e\perp b$ and $e\prec_s b$. Now, $0= eb+I
= b+I$, so $b$ belongs to $I$. The relation $e\prec_s b$
implies that $e$ belongs to the closed two-sided ideal generated by
$b$, and hence to $I$.  Thus, $e+I=0$ and $A/I=0$.
\end{proof}

\begin{lem}\label{lem01} Let $A$ be a separable
$C^*$-algebra with property (S). Then given any $a\in F(A)$ there
exists $b \in F(A)$ such that
$$a\perp b, \qquad a\prec_s b, \qquad a+b\in F(A).$$
If, moreover, $A \otimes \mathcal{K}$ is assumed to contain a strictly
full positive element, then $b$ above can be chosen to be strictly 
full in $A$.
\end{lem}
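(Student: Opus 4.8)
The plan is to start with an $a\in F(A)$ and first apply property (S) to get some $b_0\in A^+$ with $a\perp b_0$ and $a\prec_s b_0$; the task is then to upgrade $b_0$ to an element $b$ that in addition lies in $F(A)$, is orthogonal to $a$, and such that $a+b\in F(A)$. The natural move is to pass to the hereditary subalgebra $B=\overline{b_0(A)b_0}$, which is orthogonal to $a$ since $ab_0=0$. Inside $B$ we want to find a positive element $b$ with $b\in F(B)$ (equivalently $b\in F(A)$, since $F$ is preserved under passing to hereditary subalgebras) that still dominates $a$ in the $\prec_s$ sense. Since $a\prec_s b_0$, Lemma~\ref{ep-delta} gives, for suitable $\ep,\delta>0$, the relation $(a-\ep)_+\prec_s(b_0-\delta)_+$; but we actually need $a\prec_s b$, so it is cleaner to keep $a$ fixed and instead shrink $b_0$ only slightly. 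Concretely, $a\prec_s b_0$ means $(k+1)\langle a\rangle\le k\langle b_0\rangle$ for some $k$, i.e.\ $a\otimes 1_{k+1}\precsim b_0\otimes 1_k$; by the standard $\ep$–$\delta$ lemma (\cite[Proposition~2.4]{Ro:UHFII}) there is $\delta>0$ with $a\otimes 1_{k+1}\precsim (b_0-\delta)_+\otimes 1_k$, hence $a\prec_s (b_0-\delta)_+$. Now take $b=(b_0-\delta)_+$: it is positive, orthogonal to $a$ (still $ab=0$), it dominates $a$ stably, and crucially $b=(b_0-\delta)_+=b\,e$ where $e$ is any function of $b_0$ equal to $1$ on $[\delta,\|b_0\|]$, so $b\in F(A)$. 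For the last requirement $a+b\in F(A)$: since $a\perp b$, we have $a+b$ is positive, and $a\in F(A)$ means $a=af$ for a contraction $f\in A^+$, while $b=be$; because $a\perp b$ we can arrange $f\perp e$ (replacing $f$ by $f$ cut down by a function vanishing near the support of $b$ — or just note $a+b$ is again in a hereditary subalgebra generated by $f+e$), so $(a+b)(f+e)=af+be=a+b$ and thus $a+b\in F(A)$. This handles the first part of the statement.

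For the "moreover" part, assume $A\otimes\cK$ contains a strictly full positive element; by Lemma~\ref{lem_full} every full positive element of $A$ is then strictly full, so it suffices to arrange that $b$ is \emph{full} in $A$. The idea is to enlarge the $b$ just constructed by adding a small full piece that is still orthogonal to $a$ and still dominated by the existing $b$ in the $\prec_s$ order. Since $a\prec_s b$ is the property we want to preserve, and $\prec_s$ is about cut-downs, the cleanest approach is: let $B=\overline{(1-a')A(1-a')}$ — or rather, work inside the orthogonal complement of $a$. The hereditary subalgebra $C=\{c\in A: ca=ac=0\}^{\,-}$ (the annihilator hereditary subalgebra of $a$ in $A$) contains $b$ and is itself a $\sigma$-unital (separable) \Cs; $b$ need not be full in $C$, but we can find a strictly positive element $c_0$ of $C$ and note $b\precsim c_0$. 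Then consider $b'' = $ a strictly positive element of $C$, or better, use property (S) again: apply property (S) to the element $a$ to produce the original $b_0$, but observe that among all valid choices of $b_0$ we may take $b_0$ to be full in $A$ whenever $A\otimes\cK$ has a strictly full element. To see this, suppose $b_0$ from property (S) is not full; let $J$ be the ideal generated by $b_0$ and $e_J$ a positive element of $A$ outside $J$ with $(a-ae_J)$... this is getting complicated.

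The cleaner route for "moreover": once we have $b\in F(A)$ with $a\perp b$, $a\prec_s b$, $a+b\in F(A)$, replace $b$ by $b\oplus$ (nothing is available in $A$ itself)... Instead, the right move is to observe that $a\prec_s b$ together with the fact that $b$ is compactly-supported-away-from-zero (being a cut-down) lets us split off room: write $b = (b_0 - \delta)_+$ as above and note there is $\delta'$ with $\delta<\delta'$ such that $(b_0-\delta')_+$ still satisfies $a\prec_s(b_0-\delta')_+$, while $b_1:=(b_0-\delta)_+ - (b_0-\delta')_+$ is a nonzero positive element orthogonal to $(b_0-\delta')_+$ and lying in the ideal generated by $b_0$. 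Now pass to the annihilator hereditary subalgebra $C$ of $a$ (which contains $b_0$, hence $b_1$ and $(b_0-\delta')_+$); inside $C$ take a strictly positive element $c_0$. Since $A\otimes\cK$ has a strictly full element, $C$ has compact primitive ideal space, so... actually we want $b$ full \emph{in $A$}, not in $C$. Here is the resolution: the ideal of $A$ generated by $a$ is some $I_a$; property (S) applied to $a$ forces $b_0$ (hence $b$) to generate an ideal containing no... no. The honest key step, and the main obstacle, is exactly this: producing a \emph{full} (equivalently, by Lemma~\ref{lem_full}, strictly full) $b$ orthogonal to $a$ while keeping $a\prec_s b$. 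I would handle it by choosing, using separability and the strict fullness hypothesis, a sequence of positive elements generating $A\otimes\cK$ as an ideal, pushing them by orthogonality into the annihilator of $a$ (using that $a$ is itself "small" — it lies in $F(A)$, so $1-f$ is large), forming a suitable convergent orthogonal sum to get a full positive $b_{\mathrm{full}}$ orthogonal to $a$, and then replacing $b$ by $b + b_{\mathrm{full}}$ after rescaling $b_{\mathrm{full}}$ so small (using Lemma~\ref{ep-delta}-type $\ep$–$\delta$ control) that $a\prec_s b$ is not disturbed — since adding a positive element only increases $\langle b\rangle$, the relation $a\prec_s b$ is in fact automatically preserved under $b\mapsto b+b_{\mathrm{full}}$ as long as orthogonality $a\perp(b+b_{\mathrm{full}})$ and membership in $F(A)$ survive, which they do because everything was built inside the annihilator hereditary subalgebra of $a$ and that subalgebra is $\sigma$-unital. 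So the only real work is the standard construction of a full positive element inside a $\sigma$-unital hereditary subalgebra of $A$ that meets every ideal of $A$ — which is possible precisely because the annihilator of $a$ (with $a\in F(A)$, so $a$ "occupies no ideal on its own" in the relevant sense, and in fact $a+b\in F(A)$ keeps us honest) is again a full hereditary subalgebra once we have used up the right amount of room; I expect the proof in the paper to invoke $\sigma$-unitality and a routine orthogonal-sum argument here.
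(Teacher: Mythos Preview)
Your first part contains a genuine gap. You claim that from $a\otimes 1_{k+1}\precsim b_0\otimes 1_k$ the $\ep$--$\delta$ lemma \cite[Proposition~2.4]{Ro:UHFII} yields $a\otimes 1_{k+1}\precsim (b_0-\delta)_+\otimes 1_k$ for some $\delta>0$, hence $a\prec_s(b_0-\delta)_+$. But that lemma only gives $(a-\ep)_+\otimes 1_{k+1}\precsim (b_0-\delta)_+\otimes 1_k$ for each $\ep>0$; you cannot drop the cut on $a$. Membership in $F(A)$ does not make $\langle a\rangle$ compact in $\Cu(A)$ (e.g.\ in $C([0,1])\otimes M_2$ take $a(t)=t\,e_{11}$; then $a\,e_{11}=a$ so $a\in F(A)$, yet $a\not\precsim(a-\ep)_+$). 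The paper's fix is to apply property~(S) not to $a$ but to a local unit for $a$: choose $d\in A^+$ with $da=ad=a$, put $e=g(d)$ for $g$ vanishing on $[0,1/2]$ with $g(1)=1$, and apply (S) to $e\in F(A)$ to obtain $b_0$ with $e\perp b_0$ and $e\prec_s b_0$. Since $a\precsim(e-1/2)_+$, Lemma~\ref{ep-delta} gives $\delta>0$ with $(e-1/2)_+\prec_s(b_0-\delta)_+$, so $a\prec_s b:=(b_0-\delta)_+$ as required. This same device removes your second hand-wave: because the witness $e$ for $a$ is already orthogonal to $b_0$, the sum $e+h(b_0)$ (with $h(0)=0$, $h\equiv1$ on $[\delta,\infty)$) is an honest local unit for $a+b$, and $a+b\in F(A)$ follows immediately; your argument with $f+e$ does not work since nothing forces $fb=0$.

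For the ``moreover'' part you correctly sense that one must produce a full positive element orthogonal to $a$, but you never establish the key fact that makes this possible: the hereditary subalgebra $B=\{x\in A:xe=ex=0\}$ is \emph{full} in $A$. The paper deduces this from Lemma~\ref{lema00}: if the ideal $I$ generated by $B$ were proper, then any $e'\in A^+$ with $e'e=e$ would map to a unit in $A/I$, contradicting the absence of unital quotients. Once $B$ is full, Brown's theorem gives $B\otimes\cK\cong A\otimes\cK$, so $B\otimes\cK$ inherits a strictly full positive element and Lemma~\ref{lem_full} applies inside $B$. One then replaces $b_0$ by $b_0$ plus a strictly full positive element of $B$ \emph{before} cutting down, so that $b=(b_0-\delta)_+$ is full for $\delta$ small. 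Your plan of adding a full $b_{\mathrm{full}}$ after the cut-down would still need this fullness-of-$B$ argument, and would also require separate work to keep $b+b_{\mathrm{full}}$ and $a+b+b_{\mathrm{full}}$ in $F(A)$.
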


\begin{proof}
Let $a\in F(A)$, and choose $d$ in $A^+$ with $da=ad=a$. Let $g
\colon \R^+ \to [0,1]$ be a continuous function which is zero on
$[0,1/2]$ and with $g(1)=1$, and put $e=g(d)$. Then
$$e \in F(A), \qquad ea=ae=a, \qquad a \precsim (e-1/2)_+, \qquad \|e\|=1.$$
Since $A$ has property (S) there exists $b_0\in A^+$ such that
$e\perp b_0$ and $e\prec_sb_0$. It follows from Lemma~\ref{ep-delta}
that there exists $\delta >0$ such that $(e-1/2)_+ \prec_s
(b_0-\delta)_+$. Put $b = (b_0-\delta)_+ \in F(A)$, and set
$f= h(b_0)$ where $h \colon \R^+ \to [0,1]$ is a continuous function
such that $h(0)=0$ and $h(t)=1$ for $t \ge \delta$. Then $a \perp
b$, $a \prec_s b$, and
$$(e+f)(a+b) = ea+fb = a+b.$$
The latter shows that $a+b$ belongs to $F(A)$.

Assume now that $A \otimes
\mathcal{K}$ contains a strictly full positive element. We show how to
modify the proof above so that $b$ becomes strictly full.
Let $B$ be the hereditary sub-\Cs{} of $A$ consisting
of all elements which are orthogonal to $e$. Then $B$ is full in
$A$. Indeed, because $e$ belongs to $F(A)$ there exists a positive
element $e'$ in $A$ such that $e'e=ee'=e$. Let $I$ be the closed
two-sided ideal in $A$ generated by $B$, and assume, to reach a
contradiction, that $I$ were proper. Then $e'+I$ would be a unit for
$A/I$, thus contradicting Lemma~\ref{lema00}.

It follows from Brown's theorem that $B \otimes \cK$ is isomorphic
to $A \otimes \cK$, and so $B \otimes \cK$ contains a strictly full
positive element. Hence, by Lemma~\ref{lem_full}, any full element in $B$
is strictly full. Upon replacing the element $b_0$ from above with the
sum of $b_0$ and a positive strictly full element in $B$ we can assume
that $b_0$ is strictly full. It follows that $b
=(b_0-\delta)_+$ is full (and hence strictly full) if $\delta > 0$
is chosen sufficiently small.
\end{proof}

\begin{lem}\label{rema_ortho}
Let $A$ be a separable $C^*$-algebra with property (S). Then for
every $a \in F(A)$ there is a sequence $b_0,b_1,b_2, \dots$ of
elements in $F(A)$ such that the elements $a,b_0,b_1,b_2, \dots$ are
pairwise orthogonal, $a+b_0+b_1+ \cdots + b_n$ belongs to $F(A)$ for
all $n$, and such that $a\prec_s b_0 \prec_s b_1 \prec_s \cdots$.

If, moreover, $A\otimes \mathcal{K}$ is assumed to contain a strictly 
full positive element, then $b_0,b_1,b_2, \dots$ above can be chosen to be
strictly full in $A$.
\end{lem}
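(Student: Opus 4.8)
The plan is to build the sequence $b_0, b_1, b_2, \dots$ recursively by iterating Lemma~\ref{lem01}. First I would apply Lemma~\ref{lem01} to the given element $a \in F(A)$ to obtain $b_0 \in F(A)$ with $a \perp b_0$, $a \prec_s b_0$, and $a + b_0 \in F(A)$. The key observation is that $a + b_0$ is again an element of $F(A)$, so Lemma~\ref{lem01} can be applied to it: this produces an element $c \in F(A)$ with $(a+b_0) \perp c$, $(a+b_0) \prec_s c$, and $(a+b_0) + c \in F(A)$. Setting $b_1 = c$, orthogonality of $a+b_0$ to $b_1$ gives in particular $a \perp b_1$ and $b_0 \perp b_1$; moreover $a \prec_s b_0$ and $b_0 \le a + b_0 \prec_s b_1$ (using positivity of $\precsim$, i.e.\ $\langle b_0\rangle \le \langle a+b_0\rangle$) together with transitivity of $<_s$ (noted in the Remark following Proposition~\ref{stable_comparison}) yield $b_0 \prec_s b_1$. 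Continuing in this fashion, having constructed pairwise orthogonal $b_0, \dots, b_{n}$ with $a + b_0 + \cdots + b_n \in F(A)$ and $a \prec_s b_0 \prec_s \cdots \prec_s b_n$, I apply Lemma~\ref{lem01} to $a + b_0 + \cdots + b_n \in F(A)$ to get $b_{n+1} \in F(A)$ with $(a + b_0 + \cdots + b_n) \perp b_{n+1}$, hence $b_{n+1}$ orthogonal to each of $a, b_0, \dots, b_n$; with $(a + b_0 + \cdots + b_n) \prec_s b_{n+1}$, hence (since $b_n \le a + b_0 + \cdots + b_n$ and $<_s$ is transitive) $b_n \prec_s b_{n+1}$; and with $a + b_0 + \cdots + b_{n+1} \in F(A)$. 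This completes the recursion and produces the desired sequence.

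For the refined statement, suppose additionally that $A \otimes \mathcal{K}$ contains a strictly full positive element. Then at every stage of the recursion I invoke the second part of Lemma~\ref{lem01} instead: applied to an element of $F(A)$, it furnishes the new element $b_{n+1}$ to be strictly full in $A$ (the hypothesis on $A \otimes \mathcal{K}$ is exactly what that part of Lemma~\ref{lem01} requires, and it is unchanged along the recursion). Thus every $b_n$ can be taken strictly full, which is what is claimed.

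The construction is essentially routine once Lemma~\ref{lem01} is available; there is no serious obstacle. The only point requiring a little care is verifying $b_n \prec_s b_{n+1}$ rather than merely $b_0 + \cdots + b_n \prec_s b_{n+1}$: this uses that $b_n \le a + b_0 + \cdots + b_n$ in $\Cu(A)$ (an instance of positivity of the Cuntz order, since $a + b_0 + \cdots + b_n$ is, up to Cuntz equivalence, a direct sum containing $b_n$ as an orthogonal summand) combined with the transitivity of the relation $<_s$ established in the remark after Proposition~\ref{stable_comparison}. Everything else—pairwise orthogonality and membership of the partial sums in $F(A)$—is recorded directly by Lemma~\ref{lem01} at each step.
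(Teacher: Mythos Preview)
Your proposal is correct and follows essentially the same inductive argument as the paper: apply Lemma~\ref{lem01} to the running sum $a+b_0+\cdots+b_n \in F(A)$ to produce $b_{n+1}$, derive pairwise orthogonality and $b_n \prec_s b_{n+1}$ from the conclusions about the sum, and invoke the second clause of Lemma~\ref{lem01} for the strictly full case. The paper's proof is slightly more terse but identical in content; your added remark that $b_n \prec_s b_{n+1}$ follows from $\langle b_n\rangle \le \langle a+b_0+\cdots+b_n\rangle$ combined with $(a+b_0+\cdots+b_n)\prec_s b_{n+1}$ makes explicit what the paper records as ``and hence $b_n \prec_s b_{n+1}$''.
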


\begin{proof} The existence of $b_0$ such that $a \perp b_0$,
  $a\prec_s b_0$, and $a+b_0$ belongs to $F(A)$ follows from
  Lemma~\ref{lem01}. Suppose that $n \ge 0$ and that
$b_0,b_1,\dots,b_n$ have been found such that $a,b_0,b_1, \dots,b_n$
are pairwise orthogonal,  $a\prec_s b_0 \prec_s b_1 \prec_s \cdots
\prec_s b_n$, and $a+b_0+b_1+ \cdots + b_n$ belongs to $F(A)$. Then,
by Lemma~\ref{lem01}, there is $b_{n+1}$ in $F(A)$ which is
orthogonal to the sum $a+b_0+b_1+ \cdots + b_n$ (and hence to each
of the summands), such that $a+b_0+b_1+ \cdots + b_n \prec_s
b_{n+1}$ (and hence $b_n \prec_s b_{n+1}$) and such that $a+b_0+b_1+
\cdots + b_{n+1}$ belongs to $F(A)$.

Finally, use Lemma~\ref{lem01} to see that each of the positive
elements $b_j$ above can be chosen to be strictly full if $A \otimes
\cK$ contains a strictly full positive element.
\end{proof}

\noindent We will now give an algebraic characterization of property
(S) for a $C^*$-algebra. The characterization is very similar to,
but sharpens, \cite[Theorem~3.6]{HRW}. The reader is referred to
\cite{BH} for the definition and properties of 2-quasitraces. Let
us just here remind the reader than any 2-quasitrace on an exact
\Cs{} is a trace, and that the short\-coming of a quasitrace
(compared with a trace) is that it only is assumed to be additive on
commuting elements.

\begin{prop}\label{prop1}
Let $A$ be a separable $C^*$-algebra. Then $A$ has property (S) if
and only if $A$ has  no non-zero bounded lower semi-continuous 
$2$-quasitrace and no non-zero unital quotient.
\end{prop}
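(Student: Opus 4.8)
First I would establish the "only if" direction, namely that property (S) forces the absence of unital quotients and of bounded lower semicontinuous $2$-quasitraces. The first half is already done in Lemma~\ref{lema00}. For the second half, suppose toward a contradiction that $\tau$ is a non-zero bounded lower semicontinuous $2$-quasitrace on $A$, and let $d_\tau$ be the associated dimension function on $\Cu(A)$ (extended as in \eqref{eq:d_tau}; boundedness of $\tau$ means $d_\tau$ takes finite values on elements coming from $A$ itself). The key point is that $d_\tau$ is an order-preserving additive map, and if $a \prec_s b$ then by Proposition~\ref{stable_comparison}(iii) one has $d_\tau(\langle a\rangle) < d_\tau(\langle b\rangle)$ whenever this makes sense after normalization — more precisely, from $(k+1)\langle a\rangle \le k\langle b\rangle$ we get $(k+1)d_\tau(a) \le k\, d_\tau(b)$, so $d_\tau(a) < d_\tau(b)$ as long as $d_\tau(b) < \infty$. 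Now pick $a \in F(A)$ with $d_\tau(a) > 0$ (possible since $\tau \ne 0$; one can take $a = (e-\ep)_+$ for suitable $e$, or use that $\tau$ restricted to $A$ is non-zero). Apply Lemma~\ref{rema_ortho} to get a sequence $b_0 \prec_s b_1 \prec_s \cdots$ of pairwise orthogonal elements of $F(A)$ with $a \prec_s b_0$ and $a + b_0 + \cdots + b_n \in F(A)$ for all $n$. Then
\[
d_\tau(a) + d_\tau(b_0) + \cdots + d_\tau(b_n) = d_\tau(a + b_0 + \cdots + b_n) \le \|\tau\| < \infty
\]
by orthogonality and boundedness, while at the same time $d_\tau(a) < d_\tau(b_0) < d_\tau(b_1) < \cdots$ are strictly increasing positive reals — but each $d_\tau(b_j)$ is finite (being bounded by $\|\tau\|$), so the chain of strict inequalities $0 < d_\tau(a) < d_\tau(b_0) < d_\tau(b_1) < \cdots$ together with the uniform bound $d_\tau(a) + \sum_{j\le n} d_\tau(b_j) \le \|\tau\|$ forces $d_\tau(b_j) \to 0$, contradicting that the $d_\tau(b_j)$ are strictly increasing away from a positive value. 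This contradiction shows no such $\tau$ exists.

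Second I would establish the "if" direction: assuming $A$ has no non-zero unital quotient and no non-zero bounded lower semicontinuous $2$-quasitrace, I must produce for each $a \in F(A)$ an element $b \in A^+$ with $a \perp b$ and $a \prec_s b$. The strategy is to reduce to finding, inside a suitable hereditary subalgebra, an element $b$ whose dimension-function values beat those of $a$ by a definite margin, and then invoke Proposition~\ref{stable_comparison}(iii) (which characterizes $\prec_s$ via $a \propto b$ plus strict inequality of all normalized states, equivalently all dimension functions) to conclude $a \prec_s b$. Concretely, given $a \in F(A)$, choose $e \in F(A)$ with $ea = ae = a$ as in the proof of Lemma~\ref{lem01}, let $B$ be the hereditary subalgebra of elements orthogonal to $e$; the argument in Lemma~\ref{lem01} shows $B$ is full in $A$ (using the no-unital-quotient hypothesis). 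The absence of bounded $2$-quasitraces on $A$ should be leveraged — likely via a standard fact that it is equivalent to every lower semicontinuous dimension function on $\Cu(A)$ being unbounded on $B$, or to the existence of elements in $B$ with arbitrarily large dimension-function values relative to $\langle a\rangle$ — to produce $b \in B^+$ with $n\langle a\rangle \le \langle b\rangle$ for the fullness-required $n$ and with $f(\langle a\rangle) < f(\langle b\rangle)$ for every state $f \in S(\Cu(A), \langle b\rangle)$. Since every such $f$ corresponds (after the usual identification) to a lower semicontinuous dimension function, hence to a lower semicontinuous $2$-quasitrace, and these are all unbounded, the strict inequality can be arranged; orthogonality $a \perp b$ is automatic from $b \in B$.

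The main obstacle I anticipate is the last step of the "if" direction: passing from "$A$ has no bounded $2$-quasitrace" to the concrete comparison $a \prec_s b$ for a suitable $b$. One needs a genuine construction of $b$, not just an abstract non-existence statement, and the bridge is the correspondence between lower semicontinuous $2$-quasitraces on $A$ (equivalently on $A \otimes \cK$) and lower semicontinuous dimension functions / states on the Cuntz semigroup, together with a compactness or direct-limit argument to convert "every dimension function is unbounded" into a uniform margin $d_\tau(b) \ge d_\tau(a) + \text{something}$ valid simultaneously for all $\tau$. This is exactly the sort of argument carried out in \cite[Theorem~3.6]{HRW}, and I expect the proof to follow that template closely, with property (S) replacing the stronger comparison $a \precsim b$ by $a \prec_s b$ throughout. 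Handling the non-separable extension of $\tau$ to $A \otimes \cK$ and verifying that boundedness is preserved is a routine but necessary technical point.
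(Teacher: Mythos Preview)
Your proposal is correct and follows essentially the same route as the paper: the ``only if'' direction combines Lemma~\ref{lema00} with Lemma~\ref{rema_ortho} and Proposition~\ref{stable_comparison} to contradict boundedness of $\tau$ (the paper phrases the contradiction as $d_\tau(b_0+\cdots+b_n)\ge (n+1)\,d_\tau((a-\ep)_+)$ rather than via term-decay of a convergent series, but this is cosmetic), and for the ``if'' direction the paper, exactly as you anticipated, simply defers to the proof of \cite[Theorem~3.6]{HRW} without reproducing the argument.
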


\begin{proof}
The ``if'' part is contained in the proof of \cite[Theorem
3.6]{HRW}.

To prove the  ``only if'' part, suppose that $A$ has property (S).
By Lemma~\ref{lema00}, $A$ has no non-zero unital quotients.
Suppose, to reach a contradiction, that $\tau$ is a non-zero bounded
$2$-quasitrace on $A$, and let $d_\tau$ be the associated lower
semicontinuous dimension function on $W(A)$ (cf., Equation 
\eqref{eq:d_tau}). Since $\tau$ is 
non-zero there is a positive element $a$ in $A$ such that
$d_\tau( a ) >0$, and since $d_\tau$ is lower
semicontinuous, $d_\tau((a-\varepsilon)_+) > 0$ for
some $\ep >0$. We can now use Lemma~\ref{rema_ortho} to find a
sequence $b_0 = (a-\ep)_+,b_1,b_2, \dots$ of pairwise orthogonal
elements in $F(A)$ such that $b_0\prec_s b_1 \prec_s b_2 \prec_s
\cdots$. By Proposition~\ref{stable_comparison} we have $0 <
d_\tau(\langle b_0 \rangle) < d_\tau(\langle b_1 \rangle) <
d_\tau(\langle b_2 \rangle) < \cdots$, and in particular
$$d_\tau(\langle b_0+b_1+ \cdots + b_n \rangle) \ge (n+1)
d_\tau(\langle (a-\ep)_+ \rangle).$$ On the other hand, one has
$d_\tau(\langle c \rangle) \le \|\tau\|$ for all $c$ in $A^+$, and
so the inequality above is in contradiction with the assumed
boundedness of $\tau$.
\end{proof}

\noindent It is well-known that stability is not a stable property
(see \cite{Ro2}). Property (S), however, is a stable property, as
easily follows from Proposition~\ref{prop1} above:

\begin{corol} \label{cor:S-stable}
Let $A$ be a separable \Cs. Then the following conditions are
equivalent:
\begin{enumerate}
\item $A$ has property (S).
\item $M_n(A)$ has property (S) for some natural number $n$.
\item $M_n(A)$ has property (S) for all natural numbers $n$.
\end{enumerate}
\end{corol}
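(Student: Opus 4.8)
The plan is to prove Corollary~\ref{cor:S-stable} by invoking Proposition~\ref{prop1}, which characterizes property (S) intrinsically in terms of the absence of non-zero unital quotients and the absence of non-zero bounded lower semi-continuous 2-quasitraces. Since the implications (iii) $\Rightarrow$ (ii) are trivial (take $n=1$), the work reduces to showing that each of these two obstructions is insensitive to passing between $A$ and $M_n(A)$, in both directions. I would first reduce everything to these two ``matricial stability'' facts and only then verify each.

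First I would treat unital quotients. If $A/I$ is a non-zero unital quotient of $A$, then $M_n(A)/M_n(I) \cong M_n(A/I)$ is a non-zero unital quotient of $M_n(A)$; conversely every ideal of $M_n(A)$ has the form $M_n(I)$ for an ideal $I$ of $A$, and $M_n(A)/M_n(I) \cong M_n(A/I)$ is unital if and only if $A/I$ is unital (a matrix algebra over a $C^*$-algebra is unital precisely when the algebra itself is). Hence $A$ has a non-zero unital quotient if and only if $M_n(A)$ does.

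Next I would treat 2-quasitraces. A non-zero bounded lower semi-continuous 2-quasitrace $\tau$ on $A$ extends to such a quasitrace $\tau_n$ on $M_n(A)$ by the usual formula $\tau_n((a_{ij})) = \sum_{i} \tau(a_{ii})$, which is again non-zero and bounded (with $\|\tau_n\| = n\|\tau\|$); conversely, restricting a non-zero bounded lower semi-continuous 2-quasitrace on $M_n(A)$ to the upper-left corner $A \subseteq M_n(A)$ — or averaging over the $n$ diagonal corners — yields a bounded lower semi-continuous 2-quasitrace on $A$, which is non-zero since a 2-quasitrace on $M_n(A)$ is determined by its values on the diagonal matrix units. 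So $A$ admits a non-zero bounded lower semi-continuous 2-quasitrace if and only if $M_n(A)$ does. Combining the two equivalences with Proposition~\ref{prop1} gives that $A$ has property (S) if and only if $M_n(A)$ has property (S), which establishes all the implications in the corollary at once.

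The main (and only mild) obstacle is the behavior of quasitraces under the corner embedding and matrix amplification: unlike traces, 2-quasitraces are only required to be additive on commuting pairs, so one must be slightly careful that the formula $\tau_n((a_{ij})) = \sum_i \tau(a_{ii})$ genuinely defines a 2-quasitrace and that the restriction argument produces a 2-quasitrace on $A$. Both are standard (this is exactly the sort of compatibility worked out in \cite{BH}), so I would simply cite \cite{BH} for the relevant properties of 2-quasitraces rather than reprove them, and keep the proof to the few lines sketched above.
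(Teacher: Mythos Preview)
Your proposal is correct and follows essentially the same approach as the paper: reduce via Proposition~\ref{prop1} to showing that the two obstructions (non-zero unital quotients and non-zero bounded lower semi-continuous 2-quasitraces) pass between $A$ and $M_n(A)$, handle the unital-quotient case directly, and invoke \cite{BH} for the matricial behavior of 2-quasitraces. The paper's proof is just a terser version of what you wrote.
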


\begin{proof} By Proposition~\ref{prop1} it suffices to check that each of
  the two properties:
  having a non-zero unital quotient, and having a non-zero bounded
  2-quasitrace, passes to matrix algebras and back again. This is
  trivial for the first. It is a theorem (see \cite{BH}) that
  2-quasitraces extend to all matrix algebras (and vice versa).
\end{proof}

\noindent The result, \cite[Theorem 3.6]{HRW}, that we have used extensively
in the proof of Proposition~\ref{prop1} above, actually says that a
separable \Cs{} with almost unperforated Cuntz semigroup is stable
if and only if it has no non-zero unital quotient and no non-zero
bounded 2-quasitrace. Reminding the reader that almost
unperforation is the same as the ``0-comparison'' property, we can
extend \cite[Theorem 3.6]{HRW} as follows:

\begin{prop}\label{prop2}
Let $A$ be a separable $C^*$-algebra such that $\Cu(A)$ satisfies the
$\omega$-comparison property (cf.\ Definition~\ref{def:wcomp}). Let
$B$ be a hereditary sub-\Cs{} of $A \otimes \cK$. Then the following
conditions are equivalent:
\begin{enumerate}
\item $B$ is stable
\item $B$ has no non-zero unital quotient and no non-zero bounded
  2-quasitrace.
\item $B$ has property (S).
\end{enumerate}
\end{prop}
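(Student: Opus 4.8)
The plan is to close the cycle $(1)\Rightarrow(3)\Rightarrow(1)$ and establish $(2)\Leftrightarrow(3)$ separately, noting first that $B$ is separable (being a sub-\Cs{} of the separable \Cs{} $A\otimes\cK$). Then $(1)\Rightarrow(3)$ is immediate, since every stable \Cs{} has property (S) (as remarked after the definition of property (S)), and $(2)\Leftrightarrow(3)$ is exactly Proposition~\ref{prop1} applied to $B$. So all the work lies in $(3)\Rightarrow(1)$.

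For $(3)\Rightarrow(1)$ I would invoke the Hjelmborg--R\o rdam criterion quoted above: $B$ is stable if and only if for every $a\in F(B)$ there is $b\in B^+$ with $a\perp b$ and $a\precsim b$. So fix $a\in F(B)$. Since $B$ is separable with property (S), Lemma~\ref{rema_ortho} gives a sequence $b_0,b_1,b_2,\dots$ in $F(B)$ with $a,b_0,b_1,\dots$ pairwise orthogonal and $a\prec_s b_0\prec_s b_1\prec_s\cdots$; by transitivity of $\prec_s$ we get $a\prec_s b_j$ for all $j$. Viewing $a$ and the $b_j$ as positive elements of $A\otimes\cK$, the relation $(k+1)\langle a\rangle\le k\langle b_j\rangle$ in $\Cu(B)$ (Proposition~\ref{stable_comparison}) is preserved by the inclusion $B\hookrightarrow A\otimes\cK$ and hence holds in $\Cu(A)=\Cu(A\otimes\cK)$; so $\langle a\rangle<_s\langle b_j\rangle$ holds in $\Cu(A)$ for every $j$. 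For each $\varepsilon>0$ we have $\langle(a-\varepsilon)_+\rangle\ll\langle a\rangle$ in $\Cu(A)$, so the $\omega$-comparison property of $\Cu(A)$ produces an integer $n(\varepsilon)$ with
$$\langle(a-\varepsilon)_+\rangle\le\langle b_0\rangle+\langle b_1\rangle+\cdots+\langle b_{n(\varepsilon)}\rangle,\qquad\text{i.e.}\qquad(a-\varepsilon)_+\precsim b_0\oplus b_1\oplus\cdots\oplus b_{n(\varepsilon)}.$$

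The remaining and principal difficulty is that $n(\varepsilon)$ depends on $\varepsilon$, so one cannot immediately conclude $a\precsim b_0\oplus\cdots\oplus b_n$ for a single $n$; the fix is to pack all of the $b_j$ into one orthogonal sum inside $B$. Replace $b_j$ by $b_j'=\lambda_j b_j$ with $\lambda_j>0$ chosen so that $\|b_j'\|\le 2^{-j}$; the $b_j'$ are still pairwise orthogonal and orthogonal to $a$, satisfy $\langle b_j'\rangle=\langle b_j\rangle$, and $b:=\sum_{j=0}^{\infty}b_j'$ converges in norm to an element of $B^+$ with $a\perp b$. Using the standard fact that $x_0\oplus\cdots\oplus x_m\approx x_0+\cdots+x_m$ for pairwise orthogonal positive elements, we get, for every $\varepsilon>0$,
$$(a-\varepsilon)_+\precsim b_0\oplus\cdots\oplus b_{n(\varepsilon)}\approx b_0'+\cdots+b_{n(\varepsilon)}'\le b,$$
hence $\langle(a-\varepsilon)_+\rangle\le\langle b\rangle$ in $\Cu(A)$ for all $\varepsilon>0$; taking the supremum over $\varepsilon$ yields $\langle a\rangle\le\langle b\rangle$, that is, $a\precsim b$. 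As $a\in F(B)$ was arbitrary, the Hjelmborg--R\o rdam criterion shows that $B$ is stable.

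The main obstacle is precisely the $\varepsilon$-dependence of the number of summands coming out of $\omega$-comparison; the infinite-orthogonal-sum trick above is what neutralizes it. The only technical points to be careful about are the identifications of Cuntz subequivalence in the hereditary subalgebra $B$ with that in $A\otimes\cK$ for elements of $B$ — both when transferring the relations $\langle a\rangle<_s\langle b_j\rangle$ up to $\Cu(A)$, and when reading the final relation $a\precsim b$ back inside $B$ as required by the Hjelmborg--R\o rdam criterion — which is standard; one should also note for Proposition~\ref{prop1} and Lemma~\ref{rema_ortho} that a nonzero hereditary subalgebra of $A\otimes\cK$ is separable and has $F(B)\neq\{0\}$, both of which are clear.
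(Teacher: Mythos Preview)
Your argument is correct. The overall logical structure---using Proposition~\ref{prop1} for $(2)\Leftrightarrow(3)$, the trivial implication $(1)\Rightarrow(3)$, and then proving $(3)\Rightarrow(1)$ via the Hjelmborg--R\o rdam characterization together with Lemma~\ref{rema_ortho} and $\omega$-comparison---matches the paper's. The difference lies in how $(3)\Rightarrow(1)$ is executed. The paper invokes the variant \cite[Proposition~2.2]{HR} of the stability criterion, which only asks that for every $a\in B^+$ and every $\varepsilon>0$ there exist $b\in B^+$ with $(a-\varepsilon)_+\perp b$ and $(a-\varepsilon)_+\precsim b$. Applying Lemma~\ref{rema_ortho} to $(a-\varepsilon/2)_+\in F(B)$ and then a \emph{single} application of $\omega$-comparison (with $x'=\langle(a-\varepsilon)_+\rangle\ll x=\langle(a-\varepsilon/2)_+\rangle$) immediately produces such a $b=b_0+\cdots+b_n$; no $\varepsilon$-dependence issue arises. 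You instead use the version of the criterion requiring $a\precsim b$ for $a\in F(B)$, which forces you to confront the dependence of $n(\varepsilon)$ on $\varepsilon$; your infinite-orthogonal-sum device $b=\sum_j\lambda_jb_j$ is a perfectly valid way to resolve this, and has the minor conceptual advantage of producing a single witness $b$ for the given $a$. The paper's route is shorter because it chooses the form of the stability criterion already tailored to the ``$x'\ll x$'' shape of $\omega$-comparison.
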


\begin{proof} Conditions (ii) and (iii) are equivalent for all
  separable \Cs s by Proposition~\ref{prop1}, and (i) clearly
  implies (ii) (again for all \Cs s) (see eg.\ \cite{HR}).

(iii) $\Rightarrow$ (i). By \cite[Proposition~2.2]{HR} it is enough
to show that for every $a\in B^+$ and every $\varepsilon>0$ there
exists $b\in B^+$ such that $(a-\varepsilon)_+\precsim b$ and
$(a-\varepsilon)_+\perp b$. (Indeed, if such an element $b$ exists,
then $(a-2\ep)_+ = x^*bx$ for some $x \in B$; whence $(a-2\ep)_+
\sim b^{1/2}xx^*b^{1/2} := b_0 \perp (a-2\ep)_+$, and
$\|a-(a-2\ep)_+\| \le 2\ep$.)

Since $(a-\varepsilon/2)_+$ belongs to $F(B)$ we can apply
Lemma~\ref{rema_ortho} to get a sequence of positive elements $b_0,
b_1,b_2, \dots$ in $F(B)$ such that $(a-\ep/2)_+ \prec_s b_0 \prec_s
b_1 \prec_s b_2 \prec_s\cdots$ and for which $(a-\ep/2)_+,
b_0,b_1,b_2,\dots $ are mutually orthogonal. The corresponding elements
$$x' = \langle (a-\ep)_+ \rangle, \qquad x = \langle (a-\ep/2)_+
\rangle, \qquad y_j = \langle b_j \rangle$$
in $\Cu(A)$ satisfy $x' \ll x$ and $x <_s y_j$ for all $j$. By the
assumption that $\Cu(A)$ satisfies 
the $\omega$-comparison property there is a natural number $n$ such
that $x' \le y_1 + \cdots + y_n$.  Thus, $(a-\varepsilon)_+\precsim
b_0+\cdots + b_n$ (relatively to $A \otimes \cK$ and therefore also
relatively to the hereditary sub-\Cs{} $B$ of $A \otimes \cK$) and 
$(a-\varepsilon)_+\perp b_0+\cdots + b_n$ as desired.
\end{proof}

\noindent We have the following analog of Proposition~\ref{prop2},
where the assumption on the comparison property of the Cuntz
semigroup is weakened, but where we instead have to assume the
existence of a full projection:

\begin{prop}\label{propw2}
Let $A$ be a separable $C^*$-algebra such that $\Cu(A)$ satisfies the
weak $\omega$-comparison property and such that $A \otimes \cK$
contains a full projection. Let $B$ be a full hereditary sub-\Cs{}
of $A \otimes \cK$. Then the following conditions are equivalent:
\begin{enumerate}
\item $B$ is stable
\item $B$ has no non-zero unital quotient and no non-zero bounded
  2-quasitrace.
\item $B$ has property (S).
\end{enumerate}
\end{prop}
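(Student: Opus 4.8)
The plan is to mimic the proof of Proposition~\ref{prop2}, replacing the use of the $\omega$-comparison property with the weak $\omega$-comparison property, and using the full projection hypothesis to guarantee that the elements $b_j$ supplied by Lemma~\ref{rema_ortho} can be taken to be (strictly) full, which is precisely what is needed to apply weak $\omega$-comparison. As before, conditions (ii) and (iii) are equivalent for all separable \Cs s by Proposition~\ref{prop1}, and (i) $\Rightarrow$ (ii) holds in general (see \cite{HR}), so the only implication requiring work is (iii) $\Rightarrow$ (i).

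For (iii) $\Rightarrow$ (i), first observe that since $A \otimes \cK$ contains a full projection, that projection is in particular a strictly full positive element of $A \otimes \cK$; hence the same is true of $B \otimes \cK \cong A \otimes \cK$. This puts us in the setting of the ``moreover'' clauses of Lemmas~\ref{lem01} and~\ref{rema_ortho}. As in the proof of Proposition~\ref{prop2}, it suffices by \cite[Proposition~2.2]{HR} to show that for every $a \in B^+$ and every $\ep > 0$ there is $b \in B^+$ with $(a-\ep)_+ \precsim b$ and $(a-\ep)_+ \perp b$. Apply Lemma~\ref{rema_ortho} to $(a-\ep/2)_+ \in F(B)$, using the ``moreover'' clause, to obtain pairwise orthogonal positive elements $(a-\ep/2)_+, b_0, b_1, b_2, \dots$ in $F(B)$, each $b_j$ \emph{strictly full} in $B$, with $(a-\ep/2)_+ \prec_s b_0 \prec_s b_1 \prec_s \cdots$. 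Passing to $\Cu(A)$, set $x' = \langle (a-\ep)_+ \rangle$, $x = \langle (a-\ep/2)_+ \rangle$, and $y_j = \langle b_j \rangle$; then $x' \ll x$, $x <_s y_j$ for all $j$, and each $y_j$ is full in $\Cu(A)$ because $b_j$ is strictly full in $B$ (hence full in $B \otimes \cK \cong A \otimes \cK$). By the weak $\omega$-comparison property of $\Cu(A)$ there is $n$ with $x' \le y_0 + \cdots + y_n$, i.e., $(a-\ep)_+ \precsim b_0 + \cdots + b_n$ relatively to $A \otimes \cK$ and hence relatively to $B$; and $(a-\ep)_+ \perp b_0 + \cdots + b_n$ since the $b_j$ are orthogonal to $(a-\ep/2)_+ \ge (a-\ep)_+$ in the sense of the functional calculus. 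This completes the proof.

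The main subtlety, and the only place where the extra full-projection hypothesis is genuinely used, is in ensuring that the $y_j$ are full elements of $\Cu(A)$ — without this we could not invoke weak $\omega$-comparison. This in turn relies on three facts already established earlier: that a full projection in $A \otimes \cK$ is strictly full; that Brown's theorem gives $B \otimes \cK \cong A \otimes \cK$ so the strictly-full element transports to $B \otimes \cK$; and Lemma~\ref{lem_full}, which lets us upgrade ``full in $B$'' to ``strictly full in $B$'', thereby making the ``moreover'' clause of Lemma~\ref{rema_ortho} applicable. Everything else is a routine translation of the argument for Proposition~\ref{prop2}.
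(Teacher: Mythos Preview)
Your proof is correct and follows essentially the same route as the paper's: reduce to (iii)~$\Rightarrow$~(i), use the full projection hypothesis to put yourself in the ``moreover'' situation of Lemma~\ref{rema_ortho} so that the $b_j$ can be taken (strictly) full in $B$, and then run the argument of Proposition~\ref{prop2} with weak $\omega$-comparison in place of $\omega$-comparison. The only cosmetic difference is that the paper verifies the hypothesis of the ``moreover'' clause by arguing directly (via Lemma~\ref{lem_full}) that a full positive element of $B$ is strictly full in $B$, whereas you invoke Brown's theorem to identify $B\otimes\cK$ with $A\otimes\cK$; both accomplish the same thing.
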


\begin{proof} Proceeding as in the proof of Proposition~\ref{prop2},
we only need to prove
(iii) $\Rightarrow$ (i); and to prove that (i) holds it suffices to
show that  for every $a\in B^+$ and every $\varepsilon>0$ there exists
$b\in B^+$ such that $(a-\varepsilon)_+\precsim b$ and
$(a-\varepsilon)_+\bot b$.

Since $A \otimes \cK$ contains a full projection (which automatically is
strictly full), it follows from Lemma~\ref{lem_full} that all full positive 
elements in $A \otimes \cK$ are strictly full. As $B$ is separable, it
contains a full (positive) element, which is then full in $A \otimes
\cK$ and hence is strictly full in $A \otimes \cK$ and therefore also
strictly full in $B$. 
Apply Lemma~\ref{rema_ortho} to get \emph{full} positive elements
$b_0, b_1,b_2, \ldots $ in $F(B)$ such that $(a-\ep/2)_+ \prec_s b_0
\prec_s b_1 \prec_s b_2 \prec_s \cdots$ and such that $(a-\ep/2)_+, b_0,b_1,b_2
\dots$ are mutually orthogonal. Proceed as in the proof of
Proposition~\ref{prop2} (where it suffices to have 
the weak $\omega$-comparison property because the elements $y_j =
\langle b_j \rangle$ are full in $\Cu(A)$) to obtain the desired
element $b$.
\end{proof}

\noindent
We end this section
by describing when separable \Cs s with finite decomposition
rank are stable (under the assumption that their stabilization
contains a full projection):

\begin{corol}\label{corol2}
Let $A$ be a separable $C^*$-algebra with finite decomposition rank,
and assume that $A \otimes \cK$ contains a full projection. Then the
following conditions are equivalent:
\begin{enumerate}
\item $A$ is stable.
\item $A$ has no non-zero unital quotients and no non-zero bounded
positive traces.
\item $A$ has property (S).
\end{enumerate}
\end{corol}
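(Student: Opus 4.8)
The plan is to obtain this corollary by combining Proposition~\ref{lema_princ} with Proposition~\ref{propw2}; the only delicate point is that Proposition~\ref{lema_princ} is stated for \emph{unital} algebras, while $A$ is not assumed unital here. To get around this I would first pass to a unital corner. Let $p$ be a full projection in $A \otimes \cK$ (one exists by hypothesis) and put $B_0 = p(A \otimes \cK)p$. Then $B_0$ is a separable, unital \Cs{} and a full hereditary sub-\Cs{} of $A \otimes \cK$; in particular it has finite decomposition rank, since decomposition rank passes to hereditary subalgebras and $\mathrm{dr}(A \otimes \cK) = \mathrm{dr}(A) < \infty$ \cite{KW}. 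Since $B_0$ and $A$ are both full hereditary subalgebras of the $\sigma$-unital algebra $A \otimes \cK$, they are stably isomorphic (by Brown's theorem $B_0 \otimes \cK \cong A \otimes \cK$), and hence $\Cu(B_0) \cong \Cu(A)$. Proposition~\ref{lema_princ} then shows that $\Cu(B_0)$, and therefore $\Cu(A)$, has the weak $n$-comparison property for $n = \mathrm{dr}(B_0)$, and consequently the weak $\omega$-comparison property.

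Next I would apply Proposition~\ref{propw2} taking the ambient algebra to be $A$ and the full hereditary subalgebra to be $A$ itself, regarded as $A \otimes e_{11} \subseteq A \otimes \cK$ (which is indeed a full hereditary sub-\Cs{}). Its hypotheses are met: $\Cu(A)$ has the weak $\omega$-comparison property by the previous paragraph, and $A \otimes \cK$ contains a full projection by assumption. Proposition~\ref{propw2} therefore gives the equivalence of the conditions ``$A$ is stable'', ``$A$ has property (S)'', and ``$A$ has no non-zero unital quotient and no non-zero bounded $2$-quasitrace'' --- that is, the equivalence of (i), (iii), and a reformulation of (ii).

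To finish I would identify the reformulated version of (ii) with (ii) itself. A \Cs{} of finite decomposition rank is nuclear \cite{KW}, hence exact, so every bounded $2$-quasitrace on $A$ is a (bounded, positive) trace, and conversely; thus $A$ admits a non-zero bounded $2$-quasitrace if and only if it admits a non-zero bounded positive trace. The main --- essentially the only non-bookkeeping --- obstacle is the reduction to the unital case in the first paragraph: one must be sure that $B_0 = p(A \otimes \cK)p$ genuinely inherits finite decomposition rank and that $\Cu(B_0) \cong \Cu(A)$, so that Proposition~\ref{lema_princ} can be applied to $B_0$ and the conclusion transported to $A$. Both are standard permanence properties (of the decomposition rank, and of the Cuntz semigroup under stable isomorphism), but they have to be invoked explicitly.
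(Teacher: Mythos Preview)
Your proposal is correct and follows essentially the same approach as the paper: pass to the unital corner $B_0 = p(A\otimes\cK)p$, apply Proposition~\ref{lema_princ} there, and then invoke Proposition~\ref{propw2} together with the fact that finite decomposition rank implies nuclearity (so 2-quasitraces are traces). The only cosmetic difference is that the paper applies Proposition~\ref{propw2} with ambient algebra $B_0$ and views $A$ as a full hereditary subalgebra of $B_0\otimes\cK$, whereas you first transport the weak $\omega$-comparison along the isomorphism $\Cu(B_0)\cong\Cu(A)$ and then apply Proposition~\ref{propw2} with ambient algebra $A$; both routes are equivalent via Brown's theorem.
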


\begin{proof}
Let $p$ be a full projection in $A \otimes \cK$ and put $B = p(A
\otimes \cK)p$. Then $B$ has the same decomposition rank as $A$, say
$n$; and $B$ is unital. It follows from Proposition~\ref{lema_princ}
that $\Cu(B)$ has weak $n$-comparison and therefore also the weak
$\omega$-comparison property. As $A$ is (isomorphic to) a full
hereditary sub-\Cs{} of $B \otimes \cK$ the result now follows from
Proposition~\ref{propw2}. (In (ii) we have used that any
2-quasitrace on a nuclear \Cs{} is a trace.)
\end{proof}

\section{The Corona Factorization Property and the Cuntz semigroup} \label{sec:CFP_Cuntz}

\noindent Recall that a $C^*$-algebra $A$ is said to have the
\emph{Corona Factorization Property} if every full projection in the
multiplier algebra of $A\otimes \mathcal{K}$ is properly infinite.
It was observed by Kucerovsky and Ng in \cite{KN} that the Corona
Factorization Property is equivalent to a 
statement regarding stability of full hereditary sub-\Cs s of the
stabilized \Cs. Our
aim here is to characterize the Corona Factorization Property for
$C^*$-algebras in terms of the comparison property of the
Cuntz semigroup of the same name introduced in
Section~\ref{sec:semigroups}. We shall need several lemmas before we
can arrive at the main results of this section.


\begin{lem}\label{approx}
Let $A$ be a $\sigma$-unital $C^*$-algebra and suppose that $\{e_k\}$
is an increasing approximate unit for $A$ consisting of positive
contractions. Then:
\begin{enumerate}
\item For every positive $a$ in $A$ and for every $\ep>0$ one has
  $(a-\ep)_+ \precsim e_k$ for all large enough $k$.
\item  $\{\langle e_k\rangle\}$ is a full sequence in $\Cu(A)$.
\end{enumerate}
\end{lem}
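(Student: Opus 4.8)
The statement has two parts. For (i), the key fact is the standard Cuntz-comparison lemma: if $\|a - b\| < \ep$ for positive elements $a, b$ then $(a-\ep)_+ \precsim b$. So first I would fix $a \in A^+$ and $\ep > 0$, and use that $\{e_k\}$ is an approximate unit to find $k_0$ with $\|a^{1/2} e_k a^{1/2} - a\| < \ep$ for all $k \ge k_0$ (indeed $\|a - a^{1/2}e_k a^{1/2}\| = \|a^{1/2}(1-e_k)a^{1/2}\| \to 0$, working in the unitization). Then $(a-\ep)_+ \precsim a^{1/2}e_k a^{1/2}$, and since $a^{1/2} e_k a^{1/2} \precsim e_k$ (as $a^{1/2}e_k a^{1/2} = (e_k^{1/2}a^{1/2})^*(e_k^{1/2}a^{1/2})$ is Cuntz-equivalent to $e_k^{1/2} a e_k^{1/2} \le \|a\| e_k$), we conclude $(a-\ep)_+ \precsim e_k$ for all $k \ge k_0$.

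For (ii), I must check the sequence $\{\langle e_k \rangle\}$ is full in $\Cu(A)$, i.e. it is increasing and for any $y' \ll y$ in $\Cu(A)$ one has $y' \propto \langle e_k \rangle$ for some $k$. Monotonicity is immediate since $e_k \le e_{k+1}$ gives $e_k \precsim e_{k+1}$. For the fullness condition, I would invoke the characterization stated in the excerpt just before Lemma~\ref{approx}: $\{\langle a_n \rangle\}$ is full in $\Cu(A)$ iff $a_1 \precsim a_2 \precsim \cdots$ and $\{a_n\}$ is full in $A$ in the ideal sense. The sequence $\{e_k\}$ is certainly not contained in any proper closed two-sided ideal, since its closed span of $\bigcup_k \overline{e_k A e_k}$ (equivalently the ideal it generates) contains the approximate unit and hence is all of $A$. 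Alternatively, and more self-containedly, I can argue directly: given $y' \ll y$ in $\Cu(A)$, pick $a \in (A\otimes\cK)^+$ with $y = \langle a \rangle$; since $y' \ll y$ and $\langle (a-\ep)_+ \rangle \nearrow \langle a \rangle$, we get $y' \le \langle (a - \ep)_+ \rangle$ for some $\ep > 0$, and $(a-\ep)_+$ lies in the algebraic ideal generated by $A$ inside $A \otimes \cK$, hence $(a-\ep)_+ \precsim a' \otimes 1_m$ for some $a' \in A^+$ and some $m$; then by part (i), $a' \precsim e_k$ for large $k$ (after a further small cutdown absorbed into the $\ll$), so $y' \le m \langle e_k \rangle$, i.e. $y' \propto \langle e_k \rangle$.

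The main obstacle is the second argument in part (ii): making precise that a compactly-contained element $y'$ of an arbitrary class $y \in \Cu(A) = W(A \otimes \cK)$ is dominated (up to $\propto$) by some $e_k$, which requires passing from $A \otimes \cK$ down to $A$ via the fact that the Pedersen/algebraic ideal generated by $A$ is dense in $A \otimes \cK$, together with the $\ep$-cutdown trick so that part (i) applies. Once one is willing to cite the stated characterization of full sequences in $\Cu(A)$, this part is essentially bookkeeping; the genuinely computational content is entirely in part (i), which is the routine $\ep$-$\delta$ Cuntz-comparison estimate.
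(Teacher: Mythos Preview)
Your proof of (i) is identical to the paper's: approximate $a$ by $a^{1/2}e_ka^{1/2}$ and use the standard Cuntz-comparison cutdown.

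For (ii) your argument is correct, but the paper takes a slightly more direct route than your Pedersen-ideal detour. Rather than arguing that $(a-\ep)_+$ lies in the algebraic ideal generated by $A$ inside $A\otimes\cK$ and then extracting some $a'\in A^+$ with $(a-\ep)_+\precsim a'\otimes 1_m$, the paper simply observes that $\{e_k\otimes 1_n\}$ is an increasing approximate unit for $M_n(A)$ and that $\bigcup_n M_n(A)$ is dense in $A\otimes\cK$; one then applies part (i) directly in $M_n(A)$ to conclude $(a-\ep)_+\precsim e_k\otimes 1_n$ for large $k$, i.e.\ $\langle(a-\ep)_+\rangle\le n\langle e_k\rangle$. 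This avoids the extra step of producing $a'\in A^+$ and the ``further small cutdown absorbed into the $\ll$'' you mention. Your first alternative for (ii), citing the characterization of full sequences stated just before the lemma, is a bit circular since that remark is essentially what the lemma is justifying; your second, self-contained argument is the right call.
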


\begin{proof}
(i). We have $\|a^{1/2}e_ka^{1/2}-a\| < \ep$ for $k$ large enough,
  whence $(a-\ep)_+ \precsim a^{1/2}e_ka^{1/2} \precsim e_k$.

(ii). The sequence $\{\langle e_k\rangle\}$ is clearly increasing. The
fullness property of this sequence follows from (i), from the fact
that $\{e_k \otimes 1_n\}_{k=1}^\infty$ is an approximate unit for
$M_n(A)$ and that $\bigcup_{n=1}^\infty M_n(A)$ is dense in $A\otimes \cK$.
\end{proof}

\noindent
Recall from Section~\ref{sec:stable} the definition of the set $F(A)$ of
compactly supported elements in a \Cs{} $A$. Suppose that $A$ is
$\sigma$-unital. Then, to any strictly positive element $c$ in $A$ one
can associate the set
\[
F_c(A):=\{b\in A^+ \mid g_{\varepsilon}(c)b=b\text{ for some }
\varepsilon
>0\},
\]
cf.\ \cite{HR}, where $g_\ep \colon \R^+ \to \R^+$ is the continous
function that vanishes on $[0,\ep]$, is equal to $1$ on
$[2\ep,\infty)$ and is linear on $[\ep,2\ep]$. 
It is easy to see that $F_c(A)$ is a dense subset of
$F(A)$, which---unlike $F(A)$---is closed under addition.

We shall use below that whenever $c\in A$ is a strictly positive
element of $A$, then $c\otimes 1_n$ is a strictly positive element of
$M_n(A)$.

\begin{lem}
\label{lem:diagonal} Let $c$ be a strictly positive element of a
$C^*$-algebra $A$, and let $a = (a_{ij})$ be a positive element in
$M_n(A)$. Let $d = \sum_{j=1}^n a_{jj} \in A^+$ be the sum of the
diagonal elements of $a$. Then $\langle a\rangle\leq n\langle
d\rangle$; and $d$ belongs to $F_c(A)$ if $a$ belongs to $F_{c\otimes
  1_n}(M_n(A))$.
\end{lem}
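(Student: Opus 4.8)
The statement has two parts: the Cuntz comparison $\langle a \rangle \le n \langle d \rangle$ in $\Cu(A)$ (where $d = \sum_{j=1}^n a_{jj}$), and the membership $d \in F_c(A)$ whenever $a \in F_{c \otimes 1_n}(M_n(A))$. The plan is to handle these separately, starting with the comparison.

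For the comparison, the key observation is that the diagonal matrix $\diag(a_{11}, a_{22}, \dots, a_{nn}) \in M_n(A)$ is Cuntz equivalent to $a_{11} \oplus a_{22} \oplus \cdots \oplus a_{nn}$, so its class in $\Cu(A)$ equals $\langle a_{11} \rangle + \cdots + \langle a_{nn} \rangle$. The standard fact (see \cite{Ro:UHFII} or the Cuntz comparison literature already invoked in the paper) is that each diagonal entry $a_{jj}$ satisfies $a_{jj} \precsim d$ for the sum $d$ of all diagonal entries, simply because $a_{jj}$ is a hereditary-subalgebra compression and $a_{jj} \le d$ in $A^+$ (as all $a_{ii} \ge 0$), which already gives $a_{jj} \precsim d$. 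Hence $\diag(a_{11}, \dots, a_{nn}) \precsim d^{\oplus n}$, i.e.\ its class is $\le n \langle d \rangle$ in $\Cu(A)$. It then remains to show $\langle a \rangle \le \langle \diag(a_{11}, \dots, a_{nn}) \rangle$ in $\Cu(A)$; this follows because, for each $\ep > 0$, $(a - \ep)_+$ is Cuntz dominated by the diagonal part of $a$ — one uses the well-known estimate comparing a positive matrix to its diagonal (for instance, the fact that $a \precsim a_{11} \oplus \cdots \oplus a_{nn}$ always holds, which can be proved by an induction on $n$ using the $2 \times 2$ case and the identity $\begin{pmatrix} x & y \\ y^* & z \end{pmatrix} \precsim \begin{pmatrix} x & 0 \\ 0 & z \end{pmatrix}$ in $M_2(A)$ valid for positive elements). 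Chaining these gives $\langle a \rangle \le n \langle d \rangle$.

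For the second part, suppose $a \in F_{c \otimes 1_n}(M_n(A))$, so $g_\ep(c \otimes 1_n)\, a = a$ for some $\ep > 0$. Since $c \otimes 1_n$ is a strictly positive element of $M_n(A)$ and $g_\ep$ is applied entrywise-diagonally, $g_\ep(c \otimes 1_n) = g_\ep(c) \otimes 1_n = \diag(g_\ep(c), \dots, g_\ep(c))$. Writing out the matrix product $g_\ep(c \otimes 1_n)\, a = a$ entrywise yields $g_\ep(c)\, a_{ij} = a_{ij}$ for all $i, j$; in particular $g_\ep(c)\, a_{jj} = a_{jj}$ for each $j$, and summing over $j$ gives $g_\ep(c)\, d = d$. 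Hence $d \in F_c(A)$.

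The only mild subtlety — and the part I'd expect to require the most care — is the inequality $\langle a \rangle \le \langle \diag(a_{11}, \dots, a_{nn}) \rangle$, i.e.\ the comparison of an arbitrary positive matrix with its diagonal. This is a standard fact in Cuntz-semigroup theory but is not literally quoted in the excerpt, so I would either cite it or include the short inductive argument via the $2 \times 2$ case. Everything else is routine: entrywise algebra for the $F_c$ part, and monotonicity plus the elementary bound $a_{jj} \le d$ for the comparison with the sum.
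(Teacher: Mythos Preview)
Your proposal is correct and structurally parallels the paper's proof: both reduce $\langle a\rangle \le n\langle d\rangle$ to the chain $a \precsim \diag(a_{11},\dots,a_{nn})$ together with $a_{jj}\precsim d$, and your treatment of the $F_c$ claim via $g_\ep(c\otimes 1_n)=g_\ep(c)\otimes 1_n$ is identical to the paper's.

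The one genuine difference is in how the comparison $a \precsim \diag(a_{11},\dots,a_{nn})$ is obtained. You appeal to the $2\times 2$ fact $\begin{pmatrix} x & y\\ y^* & z\end{pmatrix}\precsim \begin{pmatrix} x & 0\\ 0 & z\end{pmatrix}$ and induct on $n$; this is valid but, as you note, needs either a citation or a short proof. The paper instead gives a self-contained argument tailored to its earlier Lemma~\ref{approx}: it picks approximate units $\{e_k^{(i)}\}$ for each $\overline{a_{ii}Aa_{ii}}$, observes that $e_k:=\diag(e_k^{(1)},\dots,e_k^{(n)})$ is an approximate unit for $\overline{aM_n(A)a}$ (this uses that the off-diagonal entries $a_{ij}$ lie in $\overline{a_{ii}A}$, which follows from positivity of $a$), and then invokes Lemma~\ref{approx} to get $(a-\ep)_+\precsim e_k$, whence $\langle(a-\ep)_+\rangle \le \sum_i\langle e_k^{(i)}\rangle \le n\langle d\rangle$. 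The paper's route has the advantage of being entirely internal to the article; yours is arguably more conceptual but relies on an external (if standard) lemma.
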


\begin{proof}
Let $\ep>0$.
For each $i=1,2,\dots, n$, let $\{e_k^{(i)}\}_{k=1}^\infty$ be an
approximate unit for $\overline{a_{ii}Aa_{ii}}$, and put $e_k= \text{diag}(e^{(1)}_k,\ldots,e^{(n)}_k)$. Then  $e_k^{(i)} \precsim
a_{ii} \precsim d$ for all $k$. Also, $\{e_k\}$ is an
approximate unit for $\overline{aM_n(A)a}$, whence $(a-\ep)_+ \precsim
e_k$ for all large enough $k$, cf.\ Lemma~\ref{approx}.
We therefore conclude that
$$\langle (a-\ep)_+ \rangle \le \langle e_k \rangle = \sum_{i=1}^n
\langle e_k^{(i)} \rangle \le n \langle d \rangle.
$$
This proves the first claim because $\ep >0$ was arbitrary.

Suppose that $a$ belongs to $F_{c\otimes
  1_n}(M_n(A))$. Then there is
$\varepsilon>0$ with $g_{\varepsilon}(c\otimes 1_n)a=a$. As
$g_{\varepsilon}(c\otimes 1_n)=g_{\varepsilon}(c)\otimes 1_n$, this
is easily seen to imply that $g_{\varepsilon}(c)a_{ii}=a_{ii}$ for
all $i$, hence $a_{ii}\in F_c(A)$. Thus $d$ belongs to
$F_c(A)$.
\end{proof}

\begin{lem}\label{lem:spectacular} Let $A$ be a $\sigma$-unital
  $C^*$-algebra, and fix a strictly positive element $c$ in $A$.
Suppose that $M_n(A)$ is stable for
  some positive integer $n$. Then for all elements $a,b$ in $F_c(A)$ there
  exists an element $d$ in $F_c(A)$ with $a\perp d$ and $b \otimes 1_n
  \precsim d\otimes 1_n$.
\end{lem}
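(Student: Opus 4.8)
The plan is to do everything inside $M_n(A)$, where we get to use the hypothesis that this algebra is stable, and then to push the resulting element down to $A$ by means of Lemma~\ref{lem:diagonal}. Write $c'=c\otimes 1_n$, which is a strictly positive element of $M_n(A)$. Since $F_c(A)$ is closed under addition, $a+b\in F_c(A)$, so there is $\varepsilon>0$ with $g_\varepsilon(c)(a+b)=a+b$; as $g_\varepsilon(c')=g_\varepsilon(c)\otimes 1_n$, this shows $(a+b)\otimes 1_n\in F_{c'}(M_n(A))$. First I would apply the Hjelmborg--R\o rdam characterization of stability (\cite[Proposition~2.2]{HR}) in $M_n(A)$, with the strictly positive element $c'$: this produces an element $D\in F_{c'}(M_n(A))$ with $(a+b)\otimes 1_n\perp D$ and $(a+b)\otimes 1_n\precsim D$. (That $D$ may be taken inside $F_{c'}(M_n(A))$, and not merely in $M_n(A)^+$, is the only point requiring care; if the cited form only delivers an element of $M_n(A)^+$, one replaces it by $g_\delta(c')D g_\delta(c')$ for $\delta$ small, using that $(a+b)\otimes 1_n$ is already compactly supported relative to $c'$, and arranging via \cite[Proposition~2.2]{HR} and \cite[Proposition~2.4]{Ro:UHFII} a strict margin in the subequivalence which then survives a sufficiently small such perturbation.) Since $b\le a+b$ we get $b\otimes 1_n\le (a+b)\otimes 1_n\precsim D$, hence $b\otimes 1_n\precsim D$; and since $a\otimes 1_n\le (a+b)\otimes 1_n$ and the latter is orthogonal to the positive element $D$, also $a\otimes 1_n\perp D$.

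Next I would apply Lemma~\ref{lem:diagonal} to the positive matrix $D=(D_{ij})\in M_n(A)^+$. Put $d=\sum_{j=1}^n D_{jj}\in A^+$. The lemma gives $\langle D\rangle\le n\langle d\rangle=\langle d\otimes 1_n\rangle$, i.e.\ $D\precsim d\otimes 1_n$, and, because $D\in F_{c'}(M_n(A))$, it gives $d\in F_c(A)$. Combining with the previous paragraph, $b\otimes 1_n\precsim D\precsim d\otimes 1_n$. Finally, $a\otimes 1_n\perp D$ means $(a\otimes 1_n)D=0$; inspecting entries, the $(i,j)$ entry of $(a\otimes 1_n)D$ is $aD_{ij}$, so $aD_{ij}=0$ for all $i,j$, and in particular $ad=a\sum_j D_{jj}=0$, i.e.\ $a\perp d$. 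Thus $d$ has all the required properties.

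The main obstacle is the parenthetical remark in the first paragraph: guaranteeing that the element furnished by the stability criterion can be chosen in $F_{c\otimes 1_n}(M_n(A))$, since this membership is precisely what Lemma~\ref{lem:diagonal} needs in order to return an element of $F_c(A)$ rather than just of $A^+$. Once that is settled, the rest is routine: closure of $F_c$ under addition, the elementary matrix computation showing that orthogonality to $D$ descends to orthogonality to $d$, and the passage from $\langle D\rangle\le n\langle d\rangle$ to $b\otimes 1_n\precsim d\otimes 1_n$ via transitivity of $\precsim$.
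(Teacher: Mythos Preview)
Your approach is essentially the paper's: work in $M_n(A)$, use stability to find an orthogonal Cuntz-dominating element, then push down via Lemma~\ref{lem:diagonal}. The only variation is that the paper applies stability not to $(a+b)\otimes 1_n$ but to $g_\delta(c)\otimes 1_n$ (for $\delta$ small enough that $a,b\in\overline{g_\delta(c)Ag_\delta(c)}$), citing \cite[Lemma~2.6(i)]{HR} to obtain $b'\in F_{c\otimes 1_n}(M_n(A))$ with $b'\perp g_\delta(c)\otimes 1_n$ and $g_\delta(c)\otimes 1_n\precsim b'$; then $b'\perp g_\delta(c)\otimes 1_n$ forces each diagonal entry of $b'$ to be orthogonal to $g_\delta(c)$, hence $d\perp a$. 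This sidesteps your ``main obstacle'' entirely, since the cited lemma already delivers an element of $F$.

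Your proposed workaround for that obstacle---replacing $D$ by $g_\delta(c')Dg_\delta(c')$---does not work as stated: conjugating a positive element by $g_\delta(c')$ need not preserve orthogonality to $(a+b)\otimes 1_n$, because $g_\delta(c')$ and $(a+b)\otimes 1_n$ do not commute in general, so there is no reason for $((a+b)\otimes 1_n)\,g_\delta(c')Dg_\delta(c')$ to vanish. The clean fix is simply to invoke \cite[Lemma~2.6(i)]{HR}, which for a stable algebra and any element of $F$ directly produces an orthogonal, Cuntz-dominating element that is again in $F$; with that in hand your argument goes through verbatim.
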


\begin{proof}
Let $a,b\in F_c(A)$. Then $a$ and $b$ both belong to
$\overline{g_\delta(c)Ag_\delta(c)}$ for some $\delta>0$. 
Clearly, $a,b\precsim
g_\delta(c)$ and $g_\delta(c)\otimes 1_n\in F_{c\otimes 1_n}(M_n(A))$. Using
that $M_n(A)$ is stable, we find an element $b'\in F_{c\otimes
1_n}(M_n(A))$ such that $b'\perp g_\delta(c)\otimes 1_n$ and
$g_\delta(c)\otimes 
1_n\precsim b'$, cf.\ \cite[Lemma~2.6~(i)]{HR}. Let $d \in A$ be the sum of the
diagonal elements in $b'$. Then
$d$ belongs to $F_c(A)$ and $b'\precsim d\otimes 1_n$ by
Lemma~\ref{lem:diagonal}. This shows that
$$b \otimes 1_n \precsim g_\delta(c) \otimes 1_n \precsim b' \precsim d
\otimes 1_n.$$
Since $b'\perp g_\delta(c) \otimes 1_n$, it follows that $d\perp g_\delta(c)$, whence
$d \perp a$.
\end{proof}

\noindent
The lemma below is a reformulation of the characterization of
stability  from \cite{HR}.

\begin{lem}\label{lem:tobeproved}
Let $A$ be a $\sigma$-unital
  $C^*$-algebra with a
strictly positive element $c$. Then $A$ is stable if and only if for
every $\varepsilon>0$ there exists $b\in A^+$ such that $b\perp
(c-\varepsilon)_+$ and $(c-\varepsilon)_+\precsim b$.
\end{lem}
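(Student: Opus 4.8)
The plan is to deduce this from the characterization of stability in \cite{HR}, which says that $A$ is stable if and only if for every $a \in F(A)$ there exists $b \in A^+$ with $a \perp b$ and $a \precsim b$. One direction is trivial: if $A$ is stable, then taking $a = (c-\varepsilon)_+ \in F(A)$ produces the required $b$ directly. For the converse, suppose the stated condition holds; I want to verify the criterion of \cite{HR} for an \emph{arbitrary} $a \in F(A)$, not just for powers of $c$.

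So fix $a \in F(A)$. First I would note that since $c$ is strictly positive, $a$ lies in the hereditary subalgebra $\overline{cAc} = A$, and more precisely, because $a = ae$ for some positive contraction $e$, one has $a \precsim g_\varepsilon(c)$ for all sufficiently small $\varepsilon > 0$ (this is the standard fact that any element of $F(A)$ is dominated by $g_\varepsilon(c)$ for small $\varepsilon$; it follows since $\|g_\varepsilon(c) a^{1/2} \cdot a^{1/2} - a\| \to 0$, or more directly from $g_\varepsilon(c) \to 1$ strictly). Also $a \perp (c - \varepsilon)_+$ fails in general, so I cannot just quote the hypothesis with $(c-\varepsilon)_+$ in place of $a$; instead I use the hypothesis to get $b_0 \in A^+$ with $b_0 \perp (c - \varepsilon)_+$ and $(c-\varepsilon)_+ \precsim b_0$, and then combine. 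Since $g_{\varepsilon'}(c) \precsim (c - \varepsilon)_+$ for $\varepsilon' > \varepsilon$, and $a \precsim g_{\varepsilon'}(c)$ for $\varepsilon'$ small, choosing $\varepsilon < \varepsilon'$ both small enough gives $a \precsim (c-\varepsilon)_+ \precsim b_0$. Moreover $b_0 \perp (c-\varepsilon)_+$ and $a$ sits inside $\overline{(c-\varepsilon')_+ A (c-\varepsilon')_+} \subseteq \overline{(c-\varepsilon)_+ A (c-\varepsilon)_+}$, which is orthogonal to $b_0$; hence $a \perp b_0$. Thus the pair $(a, b_0)$ witnesses the \cite{HR} criterion, and $A$ is stable.

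The main obstacle — really the only subtle point — is matching up the various $\varepsilon$'s so that simultaneously $a \precsim (c-\varepsilon)_+$ (which needs $\varepsilon$ small relative to the "support threshold" of $a$ against $c$) and $b_0$ is orthogonal to all of $\overline{(c-\varepsilon')_+ A (c-\varepsilon')_+}$ containing $a$ (which needs the orthogonality $b_0 \perp (c-\varepsilon)_+$ with $\varepsilon \le \varepsilon'$). This is a routine juggling of functional calculus inequalities of the form $g_\eta(c) \precsim (c-\varepsilon)_+$ for $\eta > \varepsilon$ and $(c-\varepsilon)_+ \perp b \Rightarrow f(c) \perp b$ for any $f$ supported in $(\varepsilon, \infty)$. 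I would spell out just enough of this to make the orthogonality and subequivalence transparent, and then invoke \cite[Proposition~2.2]{HR} (or the formulation already quoted in the proof of Proposition~\ref{prop2}) to conclude stability.
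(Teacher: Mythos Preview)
Your argument has a genuine gap at the orthogonality step. You claim that an arbitrary $a \in F(A)$ lies in the hereditary subalgebra $\overline{(c-\varepsilon')_+ A (c-\varepsilon')_+}$ for small $\varepsilon'$, and use this to deduce $a \perp b_0$. But this containment is exactly the assertion that $a \in F_c(A)$, and in general $F_c(A)$ is only \emph{dense} in $F(A)$, not equal to it. A concrete counterexample: take $A = \mathcal{K}$, let $c$ be a positive compact operator with eigenvalues $1/n$ and eigenbasis $\{e_n\}$, and let $a$ be the rank-one projection onto $v = \sum_n 2^{-n}e_n$ (normalized). Then $a \in F(A)$ (it has finite rank), yet $g_\varepsilon(c)a \ne a$ for every $\varepsilon > 0$, so $a$ does not lie in any $\overline{(c-\varepsilon')_+ A (c-\varepsilon')_+}$. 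Cuntz subequivalence $a \precsim g_{\varepsilon'}(c)$, which you do establish correctly, says nothing about orthogonality of $a$ to $b_0$.

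The paper avoids this by invoking the \emph{approximate} criterion, condition (b) of \cite[Proposition~2.2]{HR}, rather than the exact one: given $a \in F(A)$ and $\varepsilon > 0$, it suffices to produce $b',c'$ with $\|a - b'\| < \varepsilon$, $b' \perp c'$, and $b' \sim c'$. One replaces $a$ by $a' = g_\delta(c)\,a\,g_\delta(c)$, which is $\varepsilon/2$-close to $a$ for $\delta$ small and \emph{does} lie in $\overline{g_\delta(c)Ag_\delta(c)}$; then the hypothesis applied to $(c-\delta)_+$ yields $d \perp (c-\delta)_+$ with $(c-\delta)_+ \precsim d$, so $a' \perp d$ and $a' \precsim g_\delta(c) \precsim d$, and one finishes by writing $(a'-\varepsilon/2)_+ = t^*dt$ and taking $c' = d^{1/2}tt^*d^{1/2}$. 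Your outline becomes correct once you insert this approximation step.
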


\begin{proof}
The ``only if'' part follows from \cite[Theorem~2.1]{HR}. To prove the
``if'' part, we verify that condition (b) of
\cite[Proposition~2.2]{HR} is satisfied. To this end, let $a \in F(A)$
and $\ep>0$ be given. Choose $\delta >
0$ such that $\|a - g_\delta(c)ag_\delta(c)\| < \ep/2$, put $a' =g_\delta(c)ag_\delta(c)$,  and find $d \in
A^+$ such that
$(c-\delta)_+ \perp d$ and $(c-\delta)_+ \precsim  d$. Then
$a' \perp d$ and $a' \precsim g_\delta(c) \precsim d$. 
Hence there exists $t$ in $A$ such that
$b':=(a'-\ep/2)_+=t^*dt$. Put $c' =
d^{1/2}tt^*d^{1/2}$. Then $\|a-b'\| \leq \ep$, $b' \perp c'$, and $b'
\sim c'$.
\end{proof}

\begin{prop}\label{prop:CFP_stable}
Let $A$ be a $\sigma$-unital $C^*$-algebra whose Cuntz semigroup $\Cu(A)$
satisfies the Corona Factorization Property for semigroups.
Then $A$ is stable if $M_m(A)$ is stable for some $m\in
\mathbb{N}$.
\end{prop}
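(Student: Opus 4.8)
The plan is to verify stability of $A$ through the criterion of Lemma~\ref{lem:tobeproved}: fixing a strictly positive element $c$ of $A$, it suffices to show that for each $\varepsilon>0$ there is $b\in A^+$ with $b\perp(c-\varepsilon)_+$ and $(c-\varepsilon)_+\precsim b$. I would produce such a $b$ as a finite orthogonal sum $b_1+\cdots+b_k$, where the summands are built one at a time using the stability of $M_m(A)$ (via Lemma~\ref{lem:spectacular}), and the cut-off point $k$ is produced by invoking the Corona Factorization Property of $\Cu(A)$ applied to a suitable full sequence.

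First I would fix $\varepsilon>0$ (the case $\varepsilon\ge\|c\|$ being trivial) and put $\delta_n=(\varepsilon/4)\,2^{-(n-1)}$, so that $\{g_{\delta_n}(c)\}$ is an increasing approximate unit of positive contractions for $A$; by Lemma~\ref{approx}(ii) the sequence $x_n:=\langle g_{\delta_n}(c)\rangle$ is then a full sequence in $\Cu(A)$. Since $g_{\delta_1}(c)=g_{\varepsilon/4}(c)$ acts as a unit on $(c-\varepsilon/2)_+$ we get $\langle(c-\varepsilon/2)_+\rangle\le\langle g_{\delta_1}(c)\rangle$, and combining this with the proto-example $\langle(c-\varepsilon)_+\rangle\ll\langle(c-\varepsilon/2)_+\rangle$ yields $x':=\langle(c-\varepsilon)_+\rangle\ll x_1$.

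Next I would construct recursively elements $b_1,b_2,\dots\in F_c(A)$ that are pairwise orthogonal and each orthogonal to $(c-\varepsilon)_+$. Note that $(c-\varepsilon)_+\in F_c(A)$ and $g_{\delta_n}(c)\in F_c(A)$. Having found $b_1,\dots,b_{n-1}$, set $a_n=(c-\varepsilon)_++b_1+\cdots+b_{n-1}$, which lies in $F_c(A)$ because that set is closed under addition, and apply Lemma~\ref{lem:spectacular} (with $n$ there equal to our $m$) to the pair $a_n,\,g_{\delta_n}(c)\in F_c(A)$ to obtain $b_n\in F_c(A)$ with $a_n\perp b_n$ (hence $b_n$ orthogonal to $(c-\varepsilon)_+$ and to each of $b_1,\dots,b_{n-1}$) and $g_{\delta_n}(c)\otimes 1_m\precsim b_n\otimes 1_m$. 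Writing $y_n=\langle b_n\rangle$, positivity of the semigroup gives
$$x_n=\langle g_{\delta_n}(c)\rangle\le m\langle g_{\delta_n}(c)\rangle=\langle g_{\delta_n}(c)\otimes 1_m\rangle\le\langle b_n\otimes 1_m\rangle=m\,y_n .$$
Thus $\Cu(A)$, the full sequence $\{x_n\}$, the sequence $\{y_n\}$, the element $x'$ with $x'\ll x_1$, and the integer $m$ meet the hypotheses of the Corona Factorization Property for semigroups, so there is $k$ with $x'\le y_1+\cdots+y_k$. Since $b_1,\dots,b_k$ are pairwise orthogonal, $\langle b_1+\cdots+b_k\rangle=y_1+\cdots+y_k\ge x'=\langle(c-\varepsilon)_+\rangle$, whence $(c-\varepsilon)_+\precsim b_1+\cdots+b_k$; and $b_1+\cdots+b_k\perp(c-\varepsilon)_+$ because each $b_i$ is. Taking $b=b_1+\cdots+b_k$ and appealing to Lemma~\ref{lem:tobeproved} would finish the proof.

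The step I expect to be the crux is the recursive construction of $\{b_n\}$: the new element $b_n$ must be orthogonal to everything produced so far and yet, after amplification by $m$, dominate $x_n$ in the Cuntz semigroup — and this is precisely what the stability of $M_m(A)$ buys, through Lemma~\ref{lem:spectacular}. The remaining points are routine: membership in $F_c(A)$ along the recursion, the functional-calculus estimate giving $x'\ll x_1$, and the elementary fact that a multiple of an element dominates the element in a positive ordered semigroup.
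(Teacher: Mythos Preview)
Your proposal is correct and follows essentially the same approach as the paper: both proofs verify stability via Lemma~\ref{lem:tobeproved}, build a full sequence in $\Cu(A)$ out of cut-downs of a strictly positive element (the paper uses $(c-\varepsilon_n)_+$, you use the Cuntz-equivalent $g_{\delta_n}(c)$), recursively construct pairwise orthogonal elements in $F_c(A)$ using Lemma~\ref{lem:spectacular}, and then invoke the Corona Factorization Property of $\Cu(A)$ to terminate the sum. The only cosmetic difference is that you anchor the orthogonal family at $(c-\varepsilon)_+$ directly while the paper anchors it at $a_1=(c-\varepsilon_1)_+$, but this has no effect on the argument.
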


\begin{proof}
Suppose that $M_m(A)$ is stable for some natural number $m$. Let $c$
be a strictly positive element in $A^+$, and let $\varepsilon>0$ be
given. Choose a decreasing sequence $\{\varepsilon_n\}$ of strictly
positive real numbers that converges to zero, and such that
$\varepsilon_1<\varepsilon$. Let $a_n=(c-\varepsilon_n)_+$. Since
$a_n$ is Cuntz equivalent to 
$g_{\varepsilon_n}(c)$ and $\{g_{\varepsilon_n}(c)\}$ is
an increasing approximate unit for $\overline{cAc}=A$, it follows
from Lemma \ref{approx} that $\{\langle a_n\rangle\}$ is a full sequence
in $\Cu(A)$.

We use Lemma \ref{lem:spectacular} to construct a sequence
$d_1,d_2,d_3 \dots$ of positive elements in $F_c(A)$ such that
$a_1,d_1,d_2, \dots$ are pairwise orthogonal and $a_n \precsim d_n
\otimes 1_m$ for all $n$. Indeed, at stage $n$, since $a_1,d_1, \dots,
d_{n-1}$ belong to $F_c(A)$, so does their sum, and so we can find $d_n \in
F_c(A)$ orthogonal to $a_1+d_1+ \cdots +d_{n-1}$ satisfying $a_n
\precsim d_n \otimes 1_m$.

Now, apply the Corona Factorization Property for $\Cu(A)$ to
$\{\langle a_n\rangle\}$ and $\{\langle d_n\rangle\}$ (that
satisfies $\langle a_n\rangle\leq m\langle d_n\rangle$ for all $n$).
Thus, for our $\varepsilon>0$, since $\langle (c-\varepsilon)_+\rangle\ll \langle
(c-\varepsilon_1)_+\rangle=\langle a_1\rangle$ there is a natural number $k$ such
that
\[
\langle (c-\varepsilon)_+\rangle\leq
\langle d_1\rangle+\langle d_2\rangle+ \cdots + \langle d_k\rangle =
\langle d_1+d_2+ \cdots + d_k\rangle\,,
\]
which implies that $A$ is stable (by virtue of Lemma~\ref{lem:tobeproved}).
\end{proof}

\noindent
If $A$ is a non-unital \Cs{}, then we shall denote its
multiplier algebra by $\mathcal{M}(A)$ and the unit in the multiplicer
algebra by $1$.

\begin{lem} \label{lm:00}
Let $A$ be a $\sigma$-unital stable \Cs{} and let $a$ be a positive
contraction in $A$. Then $1 -a$ is a properly infinite, full,
positive element in $\mathcal{M}(A)$.
\end{lem}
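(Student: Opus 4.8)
The plan is to verify three things about $x := 1-a$: that it is full in $\mathcal{M}(A)$, that it is properly infinite, and (trivially) that it is positive. Positivity is immediate since $a$ is a positive contraction, so $0 \le 1-a \le 1$. For fullness, I would argue that the closed two-sided ideal of $\mathcal{M}(A)$ generated by $1-a$ contains $A$ and is $\sigma$-unitally large enough to be all of $\mathcal{M}(A)$; the key point is that modulo this ideal the image of $a$ would have to be the unit, but $a \in A$ maps to $0$ in the corona algebra $\mathcal{M}(A)/A$, so the ideal must contain $1$ (one cannot have a nonzero unital quotient of $\mathcal{M}(A)$ in which $a$ becomes invertible while $a$ lies in $A$). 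More concretely: if $J$ is the ideal generated by $1-a$ and $J \ne \mathcal{M}(A)$, then in $\mathcal{M}(A)/J$ the class of $a$ equals the class of $1$; but $A \subseteq \mathcal{M}(A)$ is an essential ideal and, since $A$ is stable and $\sigma$-unital, $A \not\subseteq J$ would force... — here I would instead use the cleaner route: the hereditary subalgebra $\overline{(1-a)\mathcal{M}(A)(1-a)}$ contains $(1-a)b(1-a)$ for $b \in A$, and choosing an approximate unit $\{e_k\}$ for $A$ with $e_k a = a e_k$ hard to arrange, so I would instead pick $b \in A^+$ and note $(1-a)b(1-a) \in A$ is nonzero for suitable $b$ (take $b$ supported where $a < 1$), giving that $J \cap A \ne 0$; since $A$ is simple... no, $A$ need not be simple.

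Let me restructure. The honest approach: the corona algebra $\mathcal{M}(A)/A$ is the quotient, and the image of $1-a$ there is $1$ (the unit), since $a \in A$. So $1-a$ is full in $\mathcal{M}(A)$ if and only if the ideal it generates contains $A$ — and this ideal, being an ideal of $\mathcal{M}(A)$ that maps onto a unital quotient... I would show the ideal $J$ generated by $1-a$ satisfies $\mathcal{M}(A)/J$ is a unital quotient in which $a \mapsto 1$; but by a standard fact any quotient of $\mathcal{M}(A)$ is either $0$ or contains a copy of the corona, and in the corona $a \mapsto 1 - (1-a) \mapsto 1 - 0 = 1$, consistent, so this does not immediately give a contradiction. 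The real content must be: $1-a$ generates all of $\mathcal{M}(A)$ as an ideal because $\overline{A(1-a)A}$ is an ideal of $A$ that is essential — indeed for any nonzero $b \in A^+$, $\|(1-a)^{1/2} b (1-a)^{1/2}\|$ can only vanish if $ab = b$, i.e. $b \in \overline{aAa}$, but since $a$ is a contraction in $A$ (not necessarily $1$), one shows $(1-a)$ is not a zero divisor against enough of $A$; actually $a b = b$ with $\|a\| \le 1$ forces $b = 0$ by a norm argument ($\|b\| = \|ab\| \le \|a\|\|b\|$ gives nothing, but $a^n b = b$ and if $a$ had a gap... no).

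So the main obstacle is precisely establishing fullness cleanly, and I expect the intended argument is: since $A$ is stable, $1 - a \precsim 1$ trivially and also $1 \precsim 1-a$ in $\mathcal{M}(A)$ (this is where stability enters — there is an isometry-like element; in fact $1-a \ge (1-a)^2$ and one uses that $a$, being a positive contraction in the stable algebra $A$, satisfies $\langle a \rangle$ is ``small'' so that $1 = \langle 1 \rangle \le \langle 1-a\rangle + \langle a \rangle$ but actually $1-a$ is itself properly infinite directly). For proper infiniteness: I would find, using stability of $A$ (hence an abundance of orthogonal copies inside $A$, and inside $1-\text{something}$), two mutually orthogonal positive elements each Cuntz-equivalent to $1-a$ and both dominated by $1-a$ — explicitly, write $1 - a \ge 1 - a^{1/2} \otimes \cdots$; more plausibly, since $A$ stable, there is $v \in \mathcal{M}(A)$ with $v^*v = 1$, $vv^* = 1 - e$ for a projection $e$; combine with functional calculus on $a$. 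I would carry out: (1) positivity — one line; (2) proper infiniteness — exhibit $s_1, s_2 \in \mathcal{M}(A)$ with $s_i^* (1-a) s_i = 1-a$ and $s_1^*(1-a) s_2 = 0$, built from the stability isometries of $A$ and the fact that $(1-a)$ is ``spread out''; (3) fullness — show the generated ideal meets $A$ nontrivially (any $b \in A^+$ with small support away from where $a = 1$ works, and such $b$ exist unless $a$ is a unit for $A$, impossible as $A$ is non-unital), then that $A$ is contained in it, then pass to $\mathcal{M}(A)$. The hard part will be (2) and the construction of the right $s_i$; I expect the paper uses an earlier-type argument (à la Lemma~\ref{lem:tobeproved} / \cite{HR}) that in a stable $\sigma$-unital algebra, $1$ minus a ``compact'' positive element remains full and properly infinite because it still dominates a full projection which is Murray--von Neumann equivalent to $1$.
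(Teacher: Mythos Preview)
Your write-up is a record of attempts rather than a proof: every line of attack on fullness is abandoned mid-stream, and the proper-infiniteness sketch never produces the promised $s_1,s_2$. Ironically, your \emph{first} idea for fullness is the one that works, and you dropped it one step too early. Since $a\in A$, the image of $1-a$ in the corona is the unit, so the closed ideal $J\subseteq\mathcal{M}(A)$ generated by $1-a$ satisfies $J+A=\mathcal{M}(A)$. By the second isomorphism theorem $\mathcal{M}(A)/J\cong A/(A\cap J)$; the right-hand side is a quotient of the stable algebra $A$, hence stable, while the left-hand side is unital. A stable unital \Cs{} is zero, so $A\cap J=A$, whence $1=(1-a)+a\in J$. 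Your later attempts (via $(1-a)b(1-a)\ne 0$, or $ab=b\Rightarrow b=0$) genuinely fail, as you noticed: $ab=b$ is perfectly possible for nonzero $b$ when $a$ has $1$ in its spectrum. The paper's route to fullness is different but also short: write $1=f(a)+g(a)$ with $f,g\colon[0,1]\to[0,1]$ continuous, $f\equiv 0$ on $[0,1/2]$, $g(1)=0$; then $g(a)\in\overline{(1-a)\mathcal{M}(A)(1-a)}\subseteq J$, and stability of $A$ produces $b\in A^+$ with $b\perp(a-1/2)_+$ and $(a-1/2)_+\precsim b$, so $b=g(a)bg(a)\in J$ and $f(a)\precsim(a-1/2)_+\precsim b$ lies in $J$ as well, giving $1=f(a)+g(a)\in J$.

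For proper infiniteness the gap is more serious: you never say what $s_1,s_2$ are, and ``combine with functional calculus on $a$'' is not an argument. The paper does not build isometries by hand at all. It invokes two results you should have at your fingertips: \cite[Corollary~4.3]{HR} gives directly that the hereditary sub-\Cs{} $\overline{(1-a)A(1-a)}$ is stable whenever $A$ is $\sigma$-unital and stable and $a\in A^+$ is a contraction; then \cite[Proposition~3.7]{KR} says a positive element whose associated hereditary subalgebra is stable is properly infinite. Both the $\sigma$-unital and the stable hypotheses are consumed in the first citation, which is where the real content lies.
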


\begin{proof}  It follows from \cite[Corollary 4.3]{HR} that
  $\overline{(1-a)A(1-a)}$ is stable. Hence $1-a$ is
  properly infinite, cf.\ \cite[Proposition 3.7]{KR}. We proceed to
  prove that $1-a$ is full in $\mathcal{M}(A)$.

  Take positive functions $f,g \colon [0,1]
  \to [0,1]$ such that $f$ is zero on $[0,1/2]$, $f+g=1$, and
  $g(1)=0$. Then $g(a)$ belongs to $\overline{(1-a)A(1-a)}$. Since $A$
  is stable and $\sigma$-unital we can find a positive element $b$ in
  $A$ such that $b \perp (a-1/2)_+$ and $(a-1/2)_+ \precsim b$. Then
  $f(a) \perp b$, whence
$$b = \big(f(a) + g(a)\big)b\big(f(a)+g(a)\big) = g(a)bg(a) \in
 \overline{(1-a)A(1-a)}.$$
As $f(a) \precsim (a-1/2)_+ \precsim b$, we see that $f(a)$ belongs
to the closed two-sided ideal in $\mathcal{M}(A)$ generated by
$1-a$. As $g(a)$ belongs to $\overline{(1-a)\mathcal{M}(A)(1-a)}$,
we conclude that the closed two-sided ideal generated by $1-a$
contains $1 = f(a)+g(a)$, and hence is equal to $\mathcal{M}(A)$.
\end{proof}

\begin{lem} \label{lm:0}
Let $A$ be a $\sigma$-unital stable \Cs{} and let $T$ be a positive
element in $\mathcal{M}(A)$ such that $1 \precsim T$ (or,
equivalently, such that $T$ is full and properly infinite). Then
$\overline{TAT}$ is stable.
\end{lem}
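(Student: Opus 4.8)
The plan is to reduce the statement to a comparison of projections in $\mathcal{M}(B)$, where $B:=\overline{TAT}$, and then to invoke the Schr\"oder--Bernstein theorem for properly infinite projections.

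First, the reductions. Since $1\precsim T$, choose $x\in\mathcal{M}(A)$ with $x^*Tx=1$ and put $y=T^{1/2}x$, so that $y^*y=1$ and $q:=yy^*=T^{1/2}xx^*T^{1/2}\le\|x\|^2T$ is a projection with $q\sim 1$ in $\mathcal{M}(A)$ (via $y$). The positive element $f(T)$, $f(t)=\min(\|x\|^2t,1)$, has the same hereditary subalgebra $\overline{TAT}$ and is Cuntz equivalent to $T$ (hence still satisfies $1\precsim f(T)$), so replacing $T$ by $f(T)$ we may assume $0\le T\le 1$ and $q\le T$. From $q\le T\le 1$ one gets $qTq=q$, hence $(1-T)^{1/2}q=0$, hence $Tq=qT=q$; in particular $q\in\mathcal{M}(B)$, $qBq=qAq$, and $a\mapsto yay^*$ is an isomorphism of $A$ onto $qAq$, so $qBq$ is stable. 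Note also the identities $\overline{T^{1/2}AT^{1/2}}=\overline{TAT}=\overline{T^{2}AT^{2}}=\overline{TBT}=B$, that $B$ is a $\sigma$-unital hereditary subalgebra of $A$, and that $1_{\mathcal{M}(A)}$ restricts to $1_{\mathcal{M}(B)}$. Finally, since $q\sim 1$, the projection $q$ is full in $\mathcal{M}(A)$, and a routine ideal computation then shows that $qBq=qAq$ is a full hereditary subalgebra of $B$.

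The core of the argument is the claim that $q\sim 1_{\mathcal{M}(B)}$ in $\mathcal{M}(B)$. Granting this, $B=1_{\mathcal{M}(B)}\,B\,1_{\mathcal{M}(B)}\cong qBq=qAq\cong A$ is stable, as desired. To prove the claim: $qBq$ is stable, so $q$ is properly infinite in $\mathcal{M}(B)$ by \cite[Proposition 3.7]{KR}; and once one knows in addition that $q$ is \emph{full} in $\mathcal{M}(B)$, one may write $1_{\mathcal{M}(B)}$ as a norm limit of finite sums $\sum_i n_i q n_i^*$ with $n_i\in\mathcal{M}(B)$, observe that such a sum is invertible as soon as it is $\ge\tfrac12$, and deduce $1_{\mathcal{M}(B)}\precsim N\cdot q$ for some $N\in\mathbb{N}$; since $q$ is properly infinite, $N\cdot q\precsim q$, whence $1_{\mathcal{M}(B)}\precsim q$. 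As $q\precsim 1_{\mathcal{M}(B)}$ trivially, the Schr\"oder--Bernstein theorem for projections, one of which (namely $q$) is properly infinite (cf.\ \cite{KR}), gives $q\sim 1_{\mathcal{M}(B)}$. This mirrors, on the level of the present more general hypothesis, the role played by \cite[Corollary 4.3]{HR} in Lemma~\ref{lm:00}.

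Hence everything comes down to showing that $q$ is full in $\mathcal{M}(B)$, and this is where I expect the main difficulty to lie: fullness of $q$ inside the multiplier algebra $\mathcal{M}(B)$ is strictly stronger than the fullness of $qBq$ inside $B$ that we already have (it fails, e.g., for finite--rank projections of $\mathcal{K}$). To obtain it one must exploit the hypotheses not yet fully used, namely that $T$ is full in $\mathcal{M}(A)$ and that $T$ — viewed in $\mathcal{M}(B)$ via $\overline{TBT}=B$ — dominates $q$ with $Tq=q$. The idea is to cover $1_{\mathcal{M}(A)}$ by finitely many translates $m_iTm_i^*$ of $T$ in $\mathcal{M}(A)$ and to transfer this covering to $\mathcal{M}(B)$, replacing $T$ by $q$ along the way (legitimate because $q\le T$, $Tq=q$, and $qBq$ is full in $B$, so that $q$ and $T$ generate the same ``large part'' of $B$). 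The delicate point is that only those multipliers of $A$ which map $B$ into $B$ descend to $\mathcal{M}(B)$, so the covering data must be chosen among such multipliers; here one uses the orthogonal decomposition $T=q+R$ with $q\perp R$, together with the fact that $\{g_\varepsilon(T)\}_{\varepsilon>0}$, after multiplication by elements of $B$, yields approximate units for $B$, to manufacture the required multipliers of $B$.
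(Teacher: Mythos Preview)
Your strategy is quite different from the paper's, and the decisive step is left as a sketch that does not go through as written.

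The paper never works in $\mathcal{M}(B)$. Instead it verifies the Hjelmborg--R\o rdam criterion for $B$ directly: given $a\in F(B)$, choose a local unit $e\in B$ for $a$, set $T_0=(1-e)T'(1-e)$ (with $T'=g(T)$ a normalisation of $T$), and compute $V^*T_0V=1-c$ for a suitable isometry $V\in\mathcal{M}(A)$ with range under $T'$ and a positive contraction $c\in A$. Lemma~\ref{lm:00} then says $1-c$ is full and properly infinite in $\mathcal{M}(A)$, hence so is $T_0$ (since $V^*T_0V\precsim T_0$), hence $1\precsim T_0$. This yields an isometry $W\in\mathcal{M}(A)$ with range projection under $T_0$, in particular orthogonal to $a$ and landing in $B$; then $b=WaW^*$ satisfies $a\sim b\perp a$ in $B$. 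The entire argument stays inside $\mathcal{M}(A)$.

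Your reduction to ``$q$ full in $\mathcal{M}(B)$'' is sound in that, combined with the proper infiniteness of $q$ that you do establish, it would give $q\sim 1_{\mathcal{M}(B)}$ and hence $B\cong qBq\cong A$. But the fullness itself is a real gap. Knowing that $qBq$ is full in $B$ only shows that the closed ideal of $\mathcal{M}(B)$ generated by $q$ contains $B$; there can be many closed ideals of $\mathcal{M}(B)$ strictly between $B$ and $\mathcal{M}(B)$, so this does not force $1_{\mathcal{M}(B)}$ into that ideal. Your proposed transfer of a covering $\sum_i m_iTm_i^*\ge\tfrac12\cdot 1_{\mathcal{M}(A)}$ to $\mathcal{M}(B)$ founders exactly where you anticipate: the $m_i$ need not multiply $B$ into itself, and sandwiching them with $g_\varepsilon(T)$ yields elements of $\mathcal{M}(B)$ but only strict (not norm) approximation of $1_{\mathcal{M}(B)}$, so no invertibility survives. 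The decomposition $T=q+R$ does not help either, since $R$ sits in $(1-q)\mathcal{M}(B)(1-q)$ and there is no visible way to dominate it by conjugates of $q$ inside $\mathcal{M}(B)$. In effect, proving $q$ full in $\mathcal{M}(B)$ looks at least as hard as the lemma itself; the paper's route via Lemma~\ref{lm:00} and the HR criterion sidesteps this multiplier-algebra transfer problem entirely.
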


\begin{proof} Put $B = \overline{TAT}$. Since $A$ is $\sigma$-unital,
  then so is $B$.

There is $\delta > 0$ such that $1 \precsim
  (T-2\delta)_+$, whence $1 = R^*(T-\delta)_+R$ for some element $R$
  in $\mathcal{M}(A)$. Put $V = (T-\delta)_+^{1/2}R$ and put $T' = g(T)$,
  where $g\colon \R^+ \to [0,1]$ is a continuous function such that
  $g(0)=0$, $g(t)=1$ for $t \ge \delta$, and $g$ is linear on
  $[0,\delta]$. Then $\overline{T'AT'} = \overline{TAT} =B$, and $V$ is
  an isometry whose range projection satisfies $VV^*T' = VV^*$.

To show that $B$ is stable, we use \cite{HR}, by which it suffices
to show that for each $a \in F(A)$ there is $b \in A^+$ such that $a
\perp b$ and $a \sim b$. Take $a \in F(B)$, and let $e$ be a
positive
  contraction in $B$ such that $ae = ea = a$. Put $T_0 =
(1-e)T'(1-e)$, and note that $\overline{T_0AT_0} \subseteq B$. Now,
$$V^*T_0V = V^*T'V - V^*(eT'+T'e-eT'e)V = 1 - c,$$
with $c = V^*(eT'+T'e-eT'e)V \in A$. As $V^*T_0V$ is a positive
contraction, the element $c$ is also a positive contraction. We can
therefore use Lemma~\ref{lm:00} to conclude that $V^*T_0V$ is
properly infinite and full. As $V^*T_0V \precsim T_0$ we also have
that $T_0$ is properly infinite and full. This again entails that $1
\precsim T_0$, and so there is an isometry $W$ in $\mathcal{M}(A)$
whose range projection, $WW^*$, belongs to
$\overline{T_0\mathcal{M}(A)T_0}$. In particular, $WW^* \perp a$.
Put $b=WaW^*$. Then $b$ is a positive element in $B$, $b \perp a$,
and $b \sim a$ as desired.
\end{proof}

\noindent Let $A$ be a stable \Cs. Then there exists a sequence
$\{S_n\}$ of isometries in $\mathcal{M}(A)$ with orthogonal range
projections and such that $\sum_{n=1}^\infty S_nS_n^* = 1$ (the sum
being convergent in the strict topology). Let $\{a_n\}$ be any
bounded sequence of elements in $A$ (or in $\mathcal{M}(A)$). Then
$\sum_{n=1}^\infty S_n a_n S_n^*$ is strictly convergent to an
element in $\mathcal{M}(A)$. We shall denote this element by
$\bigoplus_{n=1}^\infty a_n$. If $\{T_n\}$ is another sequence of
isometries in $\mathcal{M}(A)$ with range projections adding up to 1
in the strict topology, then $\sum_{n=1}^\infty T_nS_n^*$ is
strictly convergent to a unitary $U$ in $\mathcal{M}(A)$ and
$U\Big(\sum_{n=1}^\infty S_na_nS_n^*\Big)U^* = \sum_{n=1}^\infty
T_na_nT_n^*$. This shows that the element $\bigoplus_{n=1}^\infty
a_n$ is independent on the choice of the sequence $\{S_n\}$ of
isometries, up to unitary equivalence.

\begin{lem} \label{lm:1}
Let $A$ be a stable $\sigma$-unital \Cs{} which satisfies the Corona
  Factorization Property. Let $T$ be a full, positive element in
  $\mathcal{M}(A)$. Then $a \precsim T$ for every positive element $a$ in $A$.
\end{lem}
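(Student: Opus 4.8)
The plan is to reduce the statement to an application of the Corona Factorization Property for semigroups (in the form given by Proposition~\ref{prop:CFP_stable} or, more precisely, in its combinatorial form) by manufacturing out of the single full element $T$ in $\mathcal{M}(A)$ a ``full sequence'' and a ``dominating sequence'' of elements of $A$ to which the property can be applied. First I would fix an increasing approximate unit $\{e_k\}$ of positive contractions for $A$, so that by Lemma~\ref{approx} the sequence $\{\langle e_k\rangle\}$ is full in $\Cu(A)$ and every $(a-\ep)_+$ is dominated by some $e_k$. Thus it suffices to prove $e_k \precsim T$ for all $k$, and hence to show $\langle e_k\rangle \le \langle \,\cdot\,\rangle$ against data extracted from $T$.

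The key step is to exploit fullness of $T$ in $\mathcal{M}(A)$ together with stability of $A$. Because $T$ is full, $1$ lies in the closed two-sided ideal generated by $T$, so there are elements $R_1,\dots,R_\ell \in \mathcal{M}(A)$ with $1 = \sum_{i=1}^\ell R_i^* T R_i$; equivalently, writing $T \oplus T \oplus \cdots \oplus T$ ($\ell$ copies, realized inside $\mathcal{M}(A)$ via a sequence of isometries as in the paragraph preceding the lemma), we get $1 \precsim T^{\oplus \ell}$, and more generally $1 \precsim T^{\oplus m\ell}$ for every $m$. Now cut down: for each $k$, consider a strictly positive element of $A$, or rather the pieces $e_k$, and use that $A$ is stable to split off, inside $A$, infinitely many mutually orthogonal copies of suitable compressions of $T$. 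Concretely, for fixed $m = \ell$ I would produce positive elements $d_1,d_2,\dots \in A$, mutually orthogonal, with $e_1 \oplus \cdots \oplus e_n$ — no: rather with $\langle e_k \rangle \le m\langle d_k\rangle$ for all $k$, by compressing $T$ by appropriate partial isometries landing in orthogonal corners of the stable algebra $A$ and invoking $1 \precsim T^{\oplus m}$ together with $e_k \precsim 1$ in $\mathcal{M}(A)$. Then $\{\langle e_k\rangle\}$ is full, $\langle e_k\rangle \le m\langle d_k\rangle$ for all $k$, and the Corona Factorization Property for $\Cu(A)$ yields, for the given $a$ and $\ep$ (so that $\langle (a-\ep)_+\rangle \ll \langle e_1\rangle$), a $j$ with $\langle (a-\ep)_+\rangle \le \langle d_1\rangle + \cdots + \langle d_j\rangle = \langle d_1 + \cdots + d_j\rangle$; since $d_1 + \cdots + d_j$ was built inside the ideal generated by $T$ and in fact each $d_i \precsim T$ (being a compression of a copy of $T$), and since the $d_i$ sit in orthogonal corners they can be assembled so that their sum is again Cuntz-below $T$ (using the orthogonal isometry decomposition of $\mathcal{M}(A)$ that absorbs $j$ orthogonal copies into one copy of $T$ when $T$ is properly infinite — note a full $T$ here need not be properly infinite, so this assembly must instead go through $1 \precsim T^{\oplus m}$ directly). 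Letting $\ep \to 0$ gives $a \precsim T$.

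The main obstacle I anticipate is the bookkeeping in the previous step: arranging the auxiliary elements $d_k$ simultaneously to (i) be mutually orthogonal in $A$, (ii) satisfy $\langle e_k\rangle \le m \langle d_k\rangle$, and (iii) have the property that any finite sum $d_1 + \cdots + d_j$ is Cuntz-dominated by $T$ in $\mathcal{M}(A)$. The clean way to get (iii) without assuming $T$ properly infinite is to observe that $1 \precsim T^{\oplus m}$ for \emph{every} $m$ once $1 \precsim T^{\oplus \ell}$, hence $n$ mutually orthogonal copies of anything dominated by $1$ fit under a single $T$; so one should set up the $d_k$ as compressions $V_k^* T V_k$ with the $V_k$ having pairwise orthogonal ranges inside $A$ (possible since $A$ is stable and $\sigma$-unital, cf.\ the isometry decomposition of $\mathcal{M}(A)$), choosing $m$ large enough that $e_k \precsim 1 \precsim T^{\oplus m}$ and then distributing the $m$ summands into the orthogonal corners. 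The verification that each required ordering relation survives passage between $A$, its corners, and $\mathcal{M}(A)$ is routine but must be done carefully using Cuntz comparison in hereditary subalgebras. Once this scaffolding is in place, the actual invocation of the Corona Factorization Property for $\Cu(A)$ and the limit argument over $\ep$ are short.
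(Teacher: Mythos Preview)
Your plan has two genuine gaps.

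First, a circularity issue. The hypothesis of the lemma is that $A$ satisfies the Corona Factorization Property \emph{for $C^*$-algebras} (every full projection in $\mathcal{M}(A\otimes\cK)$ is properly infinite), not the semigroup CFP for $\Cu(A)$. In the logical structure of the paper, Lemma~\ref{lm:1} feeds into Lemma~\ref{lm:3}, which in turn is exactly what is used in Theorem~\ref{cuntz_CFP} to prove the implication ``$A$ has the CFP $\Rightarrow$ $\Cu(A)$ has the CFP for semigroups''. So you cannot invoke the CFP for $\Cu(A)$ here without begging the question; your appeal to Proposition~\ref{prop:CFP_stable} (whose hypothesis is the semigroup CFP) is not available.

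Second, even setting the circularity aside, step (iii) of your plan does not close. You want $d_1+\cdots+d_j \precsim T$, but if each $d_k$ is a compression $V_k^*TV_k$ with the $V_k$ having orthogonal ranges, the best you get directly is $d_1+\cdots+d_j \precsim T^{\oplus j}$. Your proposed fix --- ``$1 \precsim T^{\oplus m}$ for every $m$, hence $n$ mutually orthogonal copies of anything dominated by $1$ fit under a single $T$'' --- is a non sequitur: $1 \precsim T^{\oplus m}$ lets you fit things under $T^{\oplus m}$, not under $T$. Getting down to a single copy of $T$ is exactly the crux, and it requires some form of proper infiniteness that $T$ itself need not have.

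The paper's argument is genuinely different and avoids both problems. It works entirely with the $C^*$-algebraic CFP via the Kucerovsky--Ng characterization (\cite[Theorem~4.2]{KN}): $A$ has the CFP iff every full hereditary sub-\Cs{} of $A\otimes\cK$ is stable whenever some matrix algebra over it is. One sets $B=\overline{TAT}$, a full hereditary sub-\Cs{} of $A$. Since $T$ is full in the properly infinite algebra $\mathcal{M}(A)$, some $T\otimes 1_n$ is properly infinite, hence $1\precsim T\otimes 1_n$; Lemma~\ref{lm:0} then gives that $M_n(B)=\overline{(T\otimes 1_n)M_n(A)(T\otimes 1_n)}$ is stable. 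The CFP now yields that $B$ itself is stable. With $B$ stable, the conclusion is immediate: for $a\in A^+$ and $\ep>0$, fullness gives $(a-\ep)_+=\sum_j x_j^*b_jx_j$ with $b_j\in B^+$, and stability of $B$ provides orthogonal isometries in $\mathcal{M}(B)$ so that $(a-\ep)_+\precsim b_1\oplus\cdots\oplus b_n \approx \sum_j S_jb_jS_j^* \precsim T$. The passage to a single $T$ is thus achieved through stability of $B$, not through any combinatorial CFP-for-semigroups argument.
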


\begin{proof} Put $B=\overline{TAT}$. Then $B$ is a full hereditary
  sub-\Cs{} of $A$ because $T$ is full in $\mathcal{M}(A)$.

Again using that $T$ is a full element in the properly infinite
\Cs{} $\mathcal{M}(A)$, there is a positive integer $n$ such
  that $T \otimes 1_n$ is properly infinite.  As,
$$M_n(B) = \overline{(T \otimes 1_n)M_n(A)(T \otimes 1_n)},$$
we conclude from Lemma~\ref{lm:0} that $M_n(B)$ is stable. Because
$A$ is assumed to satisfy the Corona Factorization Property, we can
now conclude from \cite[Theorem 4.2]{KN} that $B$ is stable.

Let $a$ be a positive element in $A$ and let $\ep > 0$ be given. As
$B$ is full in $A$ we can find a positive integer $n$, positive
elements $b_1, \dots, b_n$ in $B$, and elements $x_1, \dots, x_n$ in
$A$ such that $(a-\ep)_+ = \sum_{j=1}^n x_j^*b_jx_j$. Because $B$ is
stable there are isometries $S_1, \dots, S_n$ in $\mathcal{M}(B)$
with orthogonal range projections. We now get
$$(a-\ep)_+ \, \precsim \, b_1 \oplus b_2 \oplus \cdots \oplus b_n \,
\approx \, S_1b_1S_1^* + S_2b_2S_2^* + \cdots + S_nb_nS_n^* \,
\precsim \, T.$$ As this holds for all $\ep >0$, we have $a \precsim
T$ as desired.
\end{proof}

The lemma below is similar to \cite[Corollary 2.7]{Ror:JOT}, but we
do not assume below that $A$ is unital. If $a$ and $b$ are positive
elements in a \Cs{} and if $m$ is a positive integer, then we shall
write $a \precsim_m b$ to denote that $a \precsim b \otimes 1_m$.

\begin{lem} \label{lm:2} Let $A$ be a $\sigma$-unital
stable \Cs, and let $c$ be a strictly positive contraction in $A$.

Let $\{a_n\}$
  be a bounded sequence of positive elements in $A$. Then
  $\bigoplus_{n=1}^\infty a_n$ defines a full element in $\mathcal{M}(A)$ if
  there exist $\delta >0$ and a positive integer $m$ such that for
 every $\ep>0$ and for every positive integer $k$ there
 is an integer $\ell >k$ such that
$$(c-\ep)_+ \precsim_m (a_k-\delta)_+ \oplus (a_{k+1} -\delta)_+
\oplus \cdots \oplus (a_\ell -\delta)_+.$$
\end{lem}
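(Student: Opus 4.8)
The plan is to show that the closed two-sided ideal $I$ in $\mathcal{M}(A)$ generated by $T := \bigoplus_{n=1}^\infty a_n$ contains the strictly positive element $c$, hence is all of $\mathcal{M}(A)$. Fix the sequence $\{S_n\}$ of isometries in $\mathcal{M}(A)$ with orthogonal range projections summing to $1$ used to define $T$, so $T = \sum_n S_n a_n S_n^*$. The key observation is that for each $k$ the element $\sum_{n \ge k} S_n a_n S_n^*$ is itself in $I$ (it is $P_k T P_k$ where $P_k = \sum_{n\ge k} S_nS_n^*$ is a projection in $\mathcal{M}(A)$), and more importantly $(T - \delta)_+$ lies in $I$, and compressing by the projection $P_{k,\ell} = \sum_{n=k}^\ell S_nS_n^*$ we get that $\sum_{n=k}^\ell S_n (a_n-\delta)_+ S_n^* \in I$ for every $k \le \ell$. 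Now $\sum_{n=k}^\ell S_n (a_n-\delta)_+ S_n^* \approx (a_k-\delta)_+ \oplus \cdots \oplus (a_\ell - \delta)_+$ in the sense of the direct-sum notation introduced before the lemma (using isometries $S_k, \dots, S_\ell$), and the tensoring-with-$1_m$ in the hypothesis can be absorbed: $b \otimes 1_m$ (for $b \in A$) is Cuntz-below $\bigoplus_{i=1}^m b$, which is a direct sum of the above type and hence again in $I$ after a further re-indexing of isometries. So the hypothesis $(c-\ep)_+ \precsim_m (a_k-\delta)_+ \oplus \cdots \oplus (a_\ell-\delta)_+$ gives $(c-\ep)_+ \precsim$ (an element of $I$), hence $(c-\ep)_+ \in I$ for every $\ep > 0$; since $I$ is closed and $c = \lim_{\ep \to 0}$ of a positive multiple of $(c-\ep)_+$ (more precisely $(c-\ep)_+ \to c$ in norm), we conclude $c \in I$, so $I = \mathcal{M}(A)$ and $T$ is full.

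In more detail, the steps I would carry out are: (1) record that $P_k, P_{k,\ell}$ are projections in $\mathcal{M}(A)$ and that compression of $T$ by them commutes with the functional calculus in the required way, so that $P_{k,\ell}(T-\delta)_+P_{k,\ell} = \sum_{n=k}^\ell S_n(a_n-\delta)_+S_n^*$, which is therefore in $I$; (2) unwind the direct-sum notation to identify $\sum_{n=k}^\ell S_n(a_n-\delta)_+S_n^*$ with $(a_k-\delta)_+ \oplus \cdots \oplus (a_\ell-\delta)_+$ up to unitary equivalence (independence of the choice of isometries was established in the paragraph preceding the lemma, applied inside the corner $P_{k,\ell}\mathcal{M}(A)P_{k,\ell}$); (3) handle the $m$-fold amplification by noting $b\otimes 1_m \precsim b \oplus b \oplus \cdots \oplus b$ ($m$ copies) for $b \in A^+$, so $(a_k-\delta)_+\otimes 1_m \oplus \cdots \oplus (a_\ell-\delta)_+\otimes 1_m \precsim$ a longer direct sum of the $(a_n-\delta)_+$'s, still an element of $I$; (4) combine with the hypothesis to get $(c-\ep)_+ \in I$ for all $\ep>0$; (5) pass to the norm limit $\ep \to 0$ to get $c \in I$, so $I$ contains a strictly positive element of $A$, hence $I \supseteq A$, hence $I$ is not contained in any proper ideal of $\mathcal{M}(A)$; since $I$ is an ideal containing $A$ and $A$ is not contained in a proper ideal of $\mathcal{M}(A)$ (as $\overline{A\mathcal{M}(A)A} = A$ generates $\mathcal{M}(A)$ — this is where $\sigma$-unitality/essentiality of $A$ in $\mathcal{M}(A)$ enters), conclude $I = \mathcal{M}(A)$.

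The main obstacle I expect is step (2)–(3): being careful that the "direct sum" $\bigoplus$ and the relation $\precsim_m$ interact cleanly with compressions by the projections $P_{k,\ell}$, and that re-indexing isometries to absorb the $m$-fold amplification is legitimate. This is essentially bookkeeping with the fact (established just before the lemma) that $\bigoplus_n a_n$ is well-defined up to unitary equivalence independently of the chosen isometries, but one must apply it inside corners $P_{k,\ell}\mathcal{M}(A)P_{k,\ell}$ (which, being corners of $\mathcal{M}(A)$ by a projection, are themselves multiplier algebras of the corresponding corner of $A$, so the earlier discussion applies verbatim). A secondary point requiring a line of justification is the very last step: that an ideal of $\mathcal{M}(A)$ containing $A$ must be all of $\mathcal{M}(A)$, which follows because $A$ is an essential ideal in $\mathcal{M}(A)$ and contains an approximate unit for $\mathcal{M}(A)$ in the strict topology, so nothing strictly larger than $A$ but proper can be a closed two-sided ideal containing $A$ — more simply, $1 \in \mathcal{M}(A)$ is a strict limit of elements of $A$ and an ideal containing $A$ is strictly dense, but for the algebraic statement one uses that $T$ full means the ideal it generates is dense, and density plus containing $c$ strictly positive forces it to be everything. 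I would state this last point crisply rather than belabor it.
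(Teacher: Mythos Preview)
Your argument has a genuine gap at step (5). The claim that a closed two-sided ideal of $\mathcal{M}(A)$ containing $A$ must equal $\mathcal{M}(A)$ is false: $A$ itself is a proper closed two-sided ideal of $\mathcal{M}(A)$ (and in general there are many closed ideals strictly between $A$ and $\mathcal{M}(A)$). None of the justifications you offer rescue this---strict density of $A$ in $\mathcal{M}(A)$ does not make a norm-closed ideal containing $A$ equal to everything. So your chain of implications stops at $A \subseteq I$, which does not give fullness of $T$.

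There is a second, related warning sign: your argument never uses the ``for every positive integer $k$'' clause in the hypothesis. Your steps (1)--(4) go through using only $k=1$: for each $\ep>0$ you find some $\ell$ with $(c-\ep)_+ \precsim_m (a_1-\delta)_+\oplus\cdots\oplus(a_\ell-\delta)_+$, and that already yields $(c-\ep)_+\in I$. A correct proof must exploit the freedom to take $k$ arbitrarily large. The paper's proof does exactly this: it chooses integers $1=k_1<k_2<\cdots$ so that $(c-\tfrac{1}{n})_+$ is Cuntz-dominated (up to the factor $m$) by the block coming from indices $k_n,\dots,k_{n+1}-1$; these blocks are \emph{disjoint}, which allows one to assemble a partition of unity $1=\sum_n g_n(c)$ with $g_n(c)$ dominated by the $n$th block. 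One then builds an explicit $Y\in\mathcal{M}(A)$ with $Y^*\big(\bigoplus a_n\otimes 1_m\big)Y=1$, proving $1\precsim_m T$ directly. The uniform $\delta$ is used to control the norms of the pieces of $Y$ so that the infinite sum defining $Y$ converges strictly. Your approach, by contrast, tries to reach fullness via the ideal generated by $T$ and cannot get past $A\subseteq I$; the missing idea is precisely this block decomposition of $1$.
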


\begin{proof} We show that $1 \precsim_m \bigoplus_{n=1}^\infty a_n$,
  which of course will imply that $\bigoplus_{n=1}^\infty a_n$ is
  full. By assumption we can find integers $1=k_1 < k_2 < k_3 < \cdots$ such
that
$$(c-\tfrac{1}{n})_+  \precsim_m (a_{k_n}-\delta)_+ \oplus (a_{k_n+1}-\delta)_+
\oplus \cdots \oplus (a_{k_{n+1}-1}-\delta)_+$$ for all $n$.

Choose isometries $T_1, T_2, \dots, T_m$ in $\mathcal{M}(A)$ with
  range projections adding up to 1. Then we can identify
  $\big(\bigoplus_{n=1}^\infty a_n\big) \otimes 1_m$ with
$$\sum_{j=1}^m T_j \Big(\bigoplus_{n=1}^\infty a_n\Big) T_j^*
\, = \, \sum_{n=1}^\infty \sum_{j=1}^m T_jS_na_nS_n^*T_j^* \, \sim
\, \sum_{n=1}^\infty S_n\Big(\sum_{j=1}^m T_ja_nT_j^* \Big)S_n^*.$$
(We have here used that the range projections of the two families of
isometries, $\{S_nT_j\}$ and $\{T_jS_n\}$, sum to 1 in the strict
topology.) Put
$$b_n = \sum_{k=k_n}^{k_{n+1}-1} S_k \Big(\sum_{j=1}^m
T_ja_nT_j^*\Big) S_k^* \sim \big(a_{k_n} \oplus a_{k_n+1} \oplus
\cdots \oplus a_{k_{n+1}-1}\big) \otimes 1_m.$$ Then
$\big(\bigoplus_{n=1}^\infty a_n\big) \otimes 1_m \sim
\sum_{n=1}^\infty b_n$, the latter sum is strictly convergent, and
$(c-\tfrac{1}{n})_+ \precsim (b_n-\delta)_+$ for all $n$.
We must show that $1 \precsim \sum_{n=1}^\infty b_n$.

Choose a strictly decreasing sequence $\{\delta_n\}$ of positive
real numbers such that $\delta_2 = 1$ and $\delta_{n+2} > 1/n$ for
all $n$. Define $g_n \colon [0,1] \to [0,1]$ to be the continuous
function which is zero on $[0,\delta_{n+2}] \cup [\delta_n,1]$ (note
that $[\delta_1,1]=\emptyset$), $g_n(\delta_{n+1})=1$, and $g_n$ is
linear on $[\delta_{n+2},\delta_{n+1}]$ and on
$[\delta_{n+1},\delta_{n}]$. Then $1 = \sum_{n=1}^\infty g_n(c)$ and
the sum is strictly convergent. Moreover, since $\delta_{n+2} > 1/n$,
we have $g_n(c) = x_n^*(b_n-\delta)_+x_n$ for some element $x_n$ in
$A$. Let $h \colon [0,1] \to \R^+$ be the continuous function which
satisfies $h(0)=0$, $h(t) = t^{-1/2}$ for $t \ge \delta$, and $h$ is
linear on $[0,\delta]$. Put $y_n = h(b_n)(b_n-\delta)_+^{1/2}x_n$.
Then $\|y_n\| \le \delta^{-1/2}$ (because
$\|(b_n-\delta)_+^{1/2}x_n\| = \|g_n(c)\|^{1/2} =1$ and $\|h(b_n)\| \le
\delta^{-1/2}$), and $y_n^*b_ny_n = g_n(c)$. Notice that $y_n$
belongs to the set $\overline{b_nAg_n(c)}$. Put $Y = \sum_{n=1}^\infty
y_n \in \mathcal{M}(A)$ (the sum is strictly
convergent). Then,
$$Y^* \Big(\sum_{n=1}^\infty b_n\Big) Y = \sum_{n=1}^\infty Y^*b_nY
= \sum_{n=1}^\infty y_n^*b_n y_n = \sum_{n=1}^\infty g_n(c) = 1,$$
which shows that $1 \precsim \sum_{n=1}^\infty b_n$.
\end{proof}

\begin{lem} \label{lm:3}
Let $A$ be a stable $\sigma$-unital \Cs{} which satisfies the Corona
Factorization Property. Let $a_1,a_2, \dots, b_1,b_2, \dots$ be
positive elements in $A$, and let $m$ be a positive integer such
that $a_1 \precsim a_2 \precsim a_3 \precsim \cdots$, such that the
set $\{a_n\}$ is full in $A$, and such that $a_n \precsim
b_n \otimes 1_m$ for all $n$.
It follows that for each $\eta>0$ there is a natural number $k$ such
that
$$(a_1-\eta)_+ \precsim b_1 \oplus b_2 \oplus \cdots \oplus b_k.$$
\end{lem}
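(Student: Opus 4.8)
The plan is to reduce the assertion to the Corona Factorization Property of the semigroup $\Cu(A)$, which we know holds because $A$ has the Corona Factorization Property (this is essentially the content of Section~\ref{sec:semigroups} together with the hypothesis). First I would translate the $C^*$-data into the Cuntz semigroup: set $x_n=\langle a_n\rangle$ and $y_n=\langle b_n\rangle$ in $\Cu(A)$. The hypotheses $a_1\precsim a_2\precsim\cdots$ and $\{a_n\}$ full in $A$ give, via the remark after the definition of full sequences, that $\{x_n\}$ is a full sequence in $\Cu(A)$; the hypothesis $a_n\precsim b_n\otimes 1_m$ gives $x_n\le m\,y_n$ for all $n$. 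Finally, for a given $\eta>0$, the proto-example of compact containment yields $\langle(a_1-\eta)_+\rangle\ll\langle a_1\rangle=x_1$.

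Now I would invoke the Corona Factorization Property for the complete ordered abelian semigroup $\Cu(A)$ applied to the full sequence $\{x_n\}$, the sequence $\{y_n\}$, the element $x'=\langle(a_1-\eta)_+\rangle$, and the integer $m$: since $x'\ll x_1$ and $x_n\le m\,y_n$ for all $n$, there is a positive integer $k$ with
$$x'\le y_1+y_2+\cdots+y_k.$$
Unwinding this in $\Cu(A)$, and using that $\langle b_1\rangle+\cdots+\langle b_k\rangle=\langle b_1\oplus\cdots\oplus b_k\rangle$, we get $\langle(a_1-\eta)_+\rangle\le\langle b_1\oplus b_2\oplus\cdots\oplus b_k\rangle$, which is exactly $(a_1-\eta)_+\precsim b_1\oplus b_2\oplus\cdots\oplus b_k$, as desired.

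The only genuine subtlety—and the step I would be most careful about—is making sure that $\Cu(A)$ really does satisfy the semigroup-level Corona Factorization Property under the standing hypothesis that the $C^*$-algebra $A$ has the Corona Factorization Property. In the logical architecture of the paper this direction ($C^*$-CFP $\Rightarrow$ semigroup-CFP for $\Cu(A)$) should already be available (it is part of the main theorem of Section~\ref{sec:CFP_Cuntz}, or can be extracted directly from the definitions together with Lemmas~\ref{lm:1}--\ref{lm:2}); if for some reason one wants this lemma to be logically prior to that theorem, then instead I would argue directly: using stability of $A$ and the Corona Factorization Property, Lemma~\ref{lm:1} gives $a\precsim T$ for every positive $a\in A$ and every full positive $T\in\mathcal M(A)$, and one combines this with the orthogonal-sum construction (as in Lemma~\ref{lm:2}) applied to the sequence $\{b_n\}$ to produce, after passing to $(a_1-\eta)_+\ll\langle a_1\rangle$, the required finite $k$. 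Either way, the $C^*$-content is entirely carried by the already-established lemmas, and the remaining work is the routine semigroup bookkeeping sketched above.
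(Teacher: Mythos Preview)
Your main approach is circular. In the paper, Lemma~\ref{lm:3} is precisely the tool used to prove the implication ``$A$ has the Corona Factorization Property $\Rightarrow$ $\Cu(A)$ has the Corona Factorization Property'' in Theorem~\ref{cuntz_CFP}. So you cannot invoke the semigroup-level CFP for $\Cu(A)$ here; the lemma must be established directly from the $C^*$-algebraic hypothesis.

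You anticipate this in your final paragraph, and your alternative sketch does point in the right direction (combine Lemmas~\ref{lm:1} and~\ref{lm:2}), but it glosses over where the genuine work lies. The paper's proof proceeds as follows: first one must carefully choose $\delta_n>0$ so that the sequence $\{(a_n-\delta_n)_+\}$ remains Cuntz-increasing \emph{and} full in $A$ (this is a nontrivial inductive choice); then pick $\delta'_n>0$ with $(a_n-\delta_n)_+\precsim_m(b_n-\delta'_n)_+$, and rescale to contractions $b'_n\approx b_n$ with $(b_n-\delta'_n)_+\precsim(b'_n-1/2)_+$. The heart of the argument is then verifying that $T=\bigoplus_{n=1}^\infty b'_n$ is full in $\mathcal{M}(A)$, which requires checking the quantified hypothesis of Lemma~\ref{lm:2} (for every $\ep>0$ and every starting index $k$, a uniform $\delta$ and $m$ work for some finite window $k,\dots,\ell$); this uses that every tail $\{(a_n-\delta_n)_+\}_{n\ge k}$ is full. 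Only then does Lemma~\ref{lm:1} give $a_1\precsim T$, and a strict-convergence cutoff yields the finite $k$. Your sketch ``combine Lemma~\ref{lm:1} with the orthogonal-sum construction'' hides exactly this fullness verification, which is the substance of the proof.
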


\begin{proof} We note first that we can choose $\delta_n > 0$ such
  that
$$(a_1-\delta_1)_+ \precsim (a_2-\delta_2)_+ \precsim
  (a_3-\delta_3) \precsim \cdots,$$
and such that $\{(a_n-\delta_n)_+\}_{n=1}^\infty$ is full in $A$.
(Let us prove this fact: As $a_j \precsim a_n$ whenever $1 \le j <
n$ there is $\eta_n > 0$ such that $(a_j - 1/n)_+ \precsim (a_n -
\eta_n)_+$ for $j=1,2, \dots, n-1$. We choose now $\delta_n$
inductively such that $0 < \delta_n \le \eta_n$ and such that
$(a_{n-1}-\delta_{n-1})_+ \precsim (a_{n}-\delta_{n})_+$. For $n=1$
we can take $\delta_1= \eta_1$. For $n \ge 2$, since $a_{n-1}
\precsim a_{n}$, there is $\delta_{n} \in (0,\eta_{n}]$ such that
$(a_{n-1}-\delta_{n-1})_+ \precsim (a_{n}-\delta_{n})_+$. To see
that the sequence $\{(a_n-\delta_n)_+\}_{n=1}^\infty$ is full in
$A$, let $I$ be the closed two-sided ideal generated by this
sequence. Since $(a_j-1/n)_+ \precsim (a_n-\eta_n)_+ \precsim
(a_n-\delta_n)_+ \in I$ whenever $1 \le j < n$, we see that
$(a_j-1/n)_+$ belongs to $I$ whenever $n > j$. It follows that $a_j$
belongs to $I$ for all $j$, whence $I=A$, because the sequence
$\{a_n\}$ was assumed to be full.)

Next we choose $\delta'_n >0$ such that $(a_n-\delta_n)_+ \precsim_m
(b_n-\delta'_n)_+$ for all $n$. Let $g_n \colon [0,1] \to [0,1]$ be
the continuous function given by $g_n(0)=0$, $g_n(t)=1$ for $t \ge
\delta'_n$, and $g_n$ is linear on $[0,\delta'_n]$. Put $b'_n =
g_n(b_n)$. Then $b_n$ is Cuntz equivalent to $b_n'$, and
$(b_n-\delta'_n)_+ \precsim (b'_n-1/2)_+$.

We claim that $T:=\bigoplus_{n=1}^\infty b'_n$ is full
  in $\mathcal{M}(A)$. To this end, take a strictly positive
  contraction $c$ in $A$.
Let $k \in \N$ and $\ep>0$ be given. Note that the tail
$\{(a_n-\delta_n)_+\}_{n=k}^\infty$ is full in $A$ (because the
sequence $\{(a_n-\delta_n)_+\}_{n=1}^\infty$ is Cuntz increasing).
It follows that $c$ belongs to the closed two-sided ideal generated
by $\{(a_n-\delta_n)_+\}_{n=k}^\infty$, whence $(c-\ep)_+$ belongs
to the algebraic ideal generated by this sequence, and hence to the
algebraic ideal generated by $\{(a_n-\delta_n)_+\}_{n=k}^{k'}$ for
some $k'>k$. This entails that
$$(c-\ep)_+ \precsim_p (a_k-\delta_k)_+ \oplus
(a_{k+1}-\delta_{k+1})_+ \oplus \cdots \oplus (a_{k'}-\delta_{k'})_+
,$$ for some positive integer $p$. Using again the sequence
$\{(a_n-\delta_n)_+\}_{n=1}^\infty$ is Cuntz increasing, we get that
\begin{eqnarray*}
(c-\ep)_+ & \precsim & (a_k-\delta_k)_+ \oplus
(a_{k+1}-\delta_{k+1})_+ \oplus \cdots \oplus
(a_{\ell}-\delta_{\ell})_+ \\
& \precsim_m & (b'_k-1/2)_+ \oplus (b'_{k+1}-1/2)_+ \oplus \cdots
\oplus (b'_\ell - 1/2)_+,
\end{eqnarray*}
when $\ell \ge k + p(k'-k+1)$. Lemma~\ref{lm:2} now yields that $T$
is full in $\mathcal{M}(A)$.

Since $A$ is assumed to have the Corona Factorization Property we
can use Lemma~\ref{lm:1} to conclude that $a_1 \precsim T$. Hence
$(a_1-\eta/2)_+ = R^*TR$ for some $R$ in $\mathcal{M}(A)$. Take
a positive contraction $e$ in $A$ such that
$e(a_1-\eta/2)_+=(a_1-\eta/2)_+=(a_1-\eta/2)_+e$. Put $r= Re \in A$.
As $\bigoplus_{n=1}^k b'_n \to T$ in the strict topology as $k \to
\infty$, it follows that $r^*\big(\bigoplus_{n=1}^k b'_n\big)r \to
r^*Tr = (a_1-\eta/2)_+$ in the norm topology (on $A$) as $k \to \infty$.
Take $k$ such that
$$\big\|r^* \big(b'_1 \oplus b'_2 \oplus \cdots \oplus b'_k \big)r -
(a_1-\eta/2)_+\big\| < \eta/2.$$ Then
$$(a_1-\eta)_+ \, \precsim \, r^*  \big(b'_1 \oplus b'_2 \oplus \cdots
\oplus b'_k \big)r \, \precsim \, b'_1 \oplus b'_2 \oplus \cdots
\oplus b'_k \, \approx \, b_1 \oplus b_2 \oplus \cdots \oplus b_k$$
as desired.
\end{proof}

\begin{theor}\label{cuntz_CFP} Let $A$ be a $\sigma$-unital \Cs. Then
  $A$ has the
  Corona Factorization Property if and only if its Cuntz semigroup,
  $\Cu(A)$, has the Corona Factorization Property (for semigroups).
\end{theor}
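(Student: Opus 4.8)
The plan is to prove the two implications separately, reducing each to results already established: Lemma~\ref{lm:3} for the forward direction, and Proposition~\ref{prop:CFP_stable} together with the Kucerovsky--Ng characterisation of the Corona Factorization Property for the converse. In both directions I will use that, by definition, $A$ has the Corona Factorization Property if and only if $A\otimes\cK$ does, and that $\Cu(A)=W(A\otimes\cK)=\Cu(A\otimes\cK)$; hence one may replace $A$ by its stabilisation and assume throughout that $A$ is stable (as well as $\sigma$-unital). For a stable $A$ one has $A\cong A\otimes\cK$, so every element of $\Cu(A)$ is of the form $\langle a\rangle$ with $a\in A^+$.

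For ``$A$ has the CFP $\Rightarrow$ $\Cu(A)$ has the CFP for semigroups'', let $\{x_n\}$ be a full sequence in $\Cu(A)$, let $\{y_n\}$ be a sequence in $\Cu(A)$, let $x'\in\Cu(A)$, and let $m\in\N$ satisfy $x'\ll x_1$ and $x_n\le m y_n$ for all $n$. Choose $a_n,b_n\in A^+$ with $x_n=\langle a_n\rangle$ and $y_n=\langle b_n\rangle$. By the description of full sequences recorded after the definition of fullness, $a_1\precsim a_2\precsim\cdots$ and $\{a_n\}$ is full in $A$; and $x_n\le m y_n$ means precisely $a_n\precsim b_n\otimes 1_m$. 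Since $\langle a_1\rangle=\sup_k\langle(a_1-1/k)_+\rangle$ and $x'\ll x_1\le\langle a_1\rangle$, there is $\eta>0$ with $x'\le\langle(a_1-\eta)_+\rangle$. Lemma~\ref{lm:3} now applies and yields a $k$ with $(a_1-\eta)_+\precsim b_1\oplus\cdots\oplus b_k$, so that $x'\le\langle b_1\rangle+\cdots+\langle b_k\rangle=y_1+\cdots+y_k$, which is exactly what the CFP for semigroups demands.

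For the converse, assume $\Cu(A)$ has the CFP for semigroups. By \cite[Theorem~4.2]{KN} it suffices to show that every $\sigma$-unital full hereditary sub-\Cs{} $B$ of $A\otimes\cK$ for which $M_n(B)$ is stable for some $n\in\N$ is itself stable. Since $B$ is $\sigma$-unital and full hereditary in the $\sigma$-unital algebra $A\otimes\cK$, Brown's theorem gives $B\otimes\cK\cong(A\otimes\cK)\otimes\cK\cong A\otimes\cK$, whence $\Cu(B)\cong\Cu(A)$ and therefore $\Cu(B)$ also has the CFP for semigroups. Proposition~\ref{prop:CFP_stable}, applied with $B$ in place of $A$, then gives that $B$ is stable, completing the proof.

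The genuine difficulty lies not in this assembly but in the lemmas it rests on: the forward direction is essentially Lemma~\ref{lm:3}, whose proof depends on Lemma~\ref{lm:2} (a criterion for $\bigoplus_n a_n$ to be full in $\mathcal{M}(A)$) and, crucially, on Lemma~\ref{lm:1} --- that under the CFP a full positive multiplier of a stable $\sigma$-unital \Cs{} absorbs every positive element of the algebra --- and the real work there is the strict-topology bookkeeping needed to realise finite Cuntz subequivalences inside $\mathcal{M}(A)$. A secondary point to be careful about is that the invocation of \cite[Theorem~4.2]{KN} is in the correct form and that it is enough to test stability on $\sigma$-unital full hereditary subalgebras, which is precisely what lets one apply Brown's theorem and Proposition~\ref{prop:CFP_stable}.
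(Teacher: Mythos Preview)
Your proof is correct and follows essentially the same approach as the paper's own proof: both directions rest on exactly the results you invoke (Lemma~\ref{lm:3} for the forward implication, and the Kucerovsky--Ng characterisation together with Proposition~\ref{prop:CFP_stable} and the isomorphism $\Cu(B)\cong\Cu(A)$ for the converse). The paper's write-up is slightly terser---it works directly with elements of $A\otimes\cK$ rather than first replacing $A$ by its stabilisation, and simply asserts $\Cu(B)\cong\Cu(A)$ without explicitly invoking Brown's theorem---but the logical structure is identical.
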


\begin{proof} Assume first that $A$ has the Corona Factorization
  Property. 
Let $\{x_n\}$ be a full sequence in $\Cu(A)$, let $\{y_n\}$
  be another sequence in $\Cu(A)$, let $x' \in \Cu(A)$, and let $m \in \N$
  be such that $x_n \le my_n$ for all $n$ and $x' \ll x_1$. Take
  positive elements $a_n$ and $b_n$ in $A\otimes\cK$ such that $x_n = \langle
  a_n \rangle$ and $y_n = \langle b_n \rangle$, and take $\eta > 0$
  such that $x' \le \langle (a_1-\eta)_+ \rangle$. Then $\{a_n\}$ is
  full in $A\otimes\cK$, $a_1 \precsim a_2 \precsim \cdots$, and $a_n
  \precsim
  b_n \otimes 1_m$ for all $n$. Hence, by Lemma~\ref{lm:3}, we get that
$$(a_1-\eta)_+ \precsim b_1 \oplus b_2 \oplus \cdots \oplus b_k$$
for some $k$. Thus
$$x' \le \langle (a_1-\eta)_+ \rangle \le \langle b_1 \rangle +
\langle b_2 \rangle + \cdots  +\langle b_k \rangle = y_1 + y_2 +
\cdots + y_k.$$ This shows that $\Cu(A)$ has the Corona Factorization
Property.

For the converse direction it is shown in \cite[Theorem 4.2]{KN} that
$A$ has the Corona Factorization Property if and only if for every
full hereditary sub-\Cs{} $B$ of $A$ one has that $B$ is stable if
some matrix algebra over $B$ is stable. As $\Cu(B) \cong \Cu(A)$ for
all such $B$,  this follows from
Proposition~\ref{prop:CFP_stable}.
\end{proof}

\begin{corol}
Let $A$ be a separable, unital $C^*$-algebra with finite decomposition
rank. Then $A$ has the Corona Factorization Property.
\end{corol}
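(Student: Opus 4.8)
The plan is to simply combine the two main results established immediately before this corollary. The statement asserts that a separable unital \Cs{} $A$ with finite decomposition rank has the Corona Factorization Property, and this is now a formality: Corollary~\ref{cor:fd} tells us precisely that under these hypotheses the Cuntz semigroup $\Cu(A)$ has the Corona Factorization Property for semigroups, while Theorem~\ref{cuntz_CFP} tells us that, for any $\sigma$-unital \Cs{} $A$, the \Cs{} $A$ has the Corona Factorization Property if and only if $\Cu(A)$ has the Corona Factorization Property (for semigroups). So the only thing to check is that the hypotheses line up: separability of $A$ implies $A$ is $\sigma$-unital, which is exactly what Theorem~\ref{cuntz_CFP} requires, and unitality plus finite decomposition rank is exactly what Corollary~\ref{cor:fd} requires.

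Thus the proof proceeds in one step. First I would invoke Corollary~\ref{cor:fd} to conclude that $\Cu(A)$ has the Corona Factorization Property for semigroups. Then I would invoke the ``if'' direction of Theorem~\ref{cuntz_CFP}, noting that $A$ is $\sigma$-unital because it is separable, to conclude that $A$ itself has the Corona Factorization Property. There is no genuine obstacle here; the real content has already been distributed over Section~\ref{sec:CFP} (where the weak $n$-comparison property of $\Cu(A)$ is derived from the Toms--Winter result and then upgraded to the Corona Factorization Property via Proposition~\ref{weak_CFP}) and Section~\ref{sec:CFP_Cuntz} (where the semigroup-level Corona Factorization Property is shown to be equivalent to the \Cs-level one). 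The one point worth a sentence of care in the writeup is making the logical chain explicit, i.e.\ that ``separable'' is being used only to get ``$\sigma$-unital'', so that Theorem~\ref{cuntz_CFP} applies verbatim.

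\begin{proof}
Since $A$ is separable, it is in particular $\sigma$-unital. By Corollary~\ref{cor:fd}, the Cuntz semigroup $\Cu(A)$ has the Corona Factorization Property for semigroups. It now follows from Theorem~\ref{cuntz_CFP} that $A$ has the Corona Factorization Property.
\end{proof}
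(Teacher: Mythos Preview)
Your proof is correct and matches the paper's own proof essentially verbatim: the paper simply writes ``Combine Theorem~\ref{cuntz_CFP} above with Corollary~\ref{cor:fd}.'' Your added remark that separability implies $\sigma$-unitality is a harmless clarification.
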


\begin{proof} Combine Theorem~\ref{cuntz_CFP} above with
  Corollary~\ref{cor:fd}.
\end{proof}

\noindent The corollary above extends the result of  Pimsner, Popa
and Voiculescu, \cite{PPV}, and Kucerovsky and Ng, \cite{KN2}, that
the  $C^*$-algebra $C(X)\otimes \mathcal{K}$ is absorbing, or
equivalently, that it satisfies the Corona Factorization Property,
when $X$ has finite covering dimension (as the decomposition rank of
$C(X)\otimes \mathcal{K}$ coincides with the covering dimension of
the space $X$).

We end this paper by describing for which \Cs s the Cuntz semigroup
has the strong Corona Factorization Property.

\begin{theor} \label{sCFP-C*-alg}
Let $A$ be a separable \Cs. Then $\Cu(A)$ has the
  strong Corona Factorization Property if and only if every ideal in
  $A$ has the Corona Factorization Property.
\end{theor}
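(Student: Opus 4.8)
The plan is to prove each implication by reducing to the already-established characterization of the Corona Factorization Property (for $C^*$-algebras) in terms of stability of matrix algebras over full hereditary subalgebras (\cite[Theorem~4.2]{KN}), together with Theorem~\ref{cuntz_CFP} and the relationship between ideals of $A$ and order ideals of $\Cu(A)$. Recall that the lattice of closed two-sided ideals of $A$ is order-isomorphic to the lattice of order ideals of $\Cu(A)$ (an order ideal being a subsemigroup closed under suprema and hereditary for $\le$), and that for an ideal $I \trianglelefteq A$ one has $\Cu(I)$ naturally order-isomorphic to the corresponding order ideal of $\Cu(A)$, which moreover is itself a complete ordered abelian semigroup. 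This dictionary is what lets us pass between the two sides.

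For the ``only if'' direction, suppose $\Cu(A)$ has the strong Corona Factorization Property and let $I$ be an ideal of $A$. I would first observe that the strong Corona Factorization Property passes to order ideals: if $W' \subseteq W$ is an order ideal and $x' \ll x$ in $W'$, $x \le my_n$ in $W'$, then the same relations hold in $W$ (compact containment is inherited because $W'$ is closed under the relevant suprema), so $x' \le y_1 + \cdots + y_k$ in $W$, and this inequality then holds in $W'$ by hereditariness. Hence $\Cu(I)$ has the strong Corona Factorization Property, which in particular implies it has the (ordinary) Corona Factorization Property for semigroups (every full sequence $\{x_n\}$ has its supremum dominating some $x_n$-tail, so one applies the strong version to a suitable increasing sequence built from the $x_n$). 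By Theorem~\ref{cuntz_CFP}, since $I$ is itself $\sigma$-unital when $A$ is separable, $I$ has the Corona Factorization Property.

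For the ``if'' direction, suppose every ideal of $A$ has the Corona Factorization Property; I must show $\Cu(A)$ has the strong Corona Factorization Property. Given $x' \ll x$ in $\Cu(A)$, a sequence $\{y_n\}$ with $x \le my_n$ for all $n$, let $I$ be the ideal of $A$ corresponding to the order ideal of $\Cu(A)$ generated by $x$ (equivalently, the ideal generated by a positive element $a$ with $\langle a\rangle$ somewhat larger than $x$; one must be mild here because $x$ need not be of the form $\langle a\rangle$ with $a$ generating a fixed ideal, but since $x$ is a supremum of an increasing sequence from $W(A\otimes\cK)$ and $x$ is fixed, the ideal it generates in $\Cu(A)$ is well-defined and equals $\Cu(I)$ for a definite $I$). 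The point is that the constant sequence $\{x\}$ is a \emph{full} sequence in $\Cu(I)$ — because $x$ is an order unit for the order ideal it generates — and $\{y_n\}$ may be replaced by $\{my_n \wedge$ something in $\Cu(I)\}$ so as to land in $\Cu(I)$; more simply, $x \le my_n$ already forces $my_n$ (hence a suitable cut-down) to lie in $\Cu(I)$, and one works with elements $\tilde y_n \in \Cu(I)$ with $x \le m\tilde y_n \le my_n$. Then applying the Corona Factorization Property for the semigroup $\Cu(I)$ (which holds by Theorem~\ref{cuntz_CFP} applied to the ideal $I$) to the full sequence $\{x\}$, the sequence $\{\tilde y_n\}$, and the element $x' \ll x$, we obtain $k$ with $x' \le \tilde y_1 + \cdots + \tilde y_k \le y_1 + \cdots + y_k$ in $\Cu(I) \subseteq \Cu(A)$.

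The main obstacle, and the step requiring the most care, is the bookkeeping in the ``if'' direction: namely arranging that the $y_n$'s (or suitable substitutes) lie in the order ideal $\Cu(I)$ so that the semigroup version of the Corona Factorization Property for $\Cu(I)$ can be invoked. The subtlety is that $x \le my_n$ only tells us $x$ lies below a multiple of $y_n$, not that $y_n \in \Cu(I)$; the fix is to pass to $\tilde y_n := $ the element of $\Cu(I)$ obtained by restricting (using that $\Cu(I)$ sits inside $\Cu(A)$ as a retract via the functoriality of $\Cu$ applied to the quotient $A \to A/I$, or more concretely by cutting $y_n$ down by the support of a strictly positive element of $I$), which still satisfies $x \le m\tilde y_n$ because $x \in \Cu(I)$, and which satisfies $\tilde y_n \le y_n$. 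Once that reduction is in place, everything else is the routine translation between ideals of $A$, order ideals of $\Cu(A)$, and the two theorems already proved.
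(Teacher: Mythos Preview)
Your overall strategy matches the paper's closely: for ``only if'', the strong Corona Factorization Property passes to order ideals and Theorem~\ref{cuntz_CFP} applies; for ``if'', pass to the ideal $I$ generated by $x$ and invoke the Corona Factorization Property for $\Cu(I)$. But your handling of the key cut-down step in the ``if'' direction has a real gap. The claim that $\Cu(I)$ is a retract of $\Cu(A)$ via $\Cu$ applied to $A \to A/I$ is simply false: that functorial map lands in $\Cu(A/I)$, and there is no natural retraction onto $\Cu(I)$. Your alternative---cutting $y_n$ down by the support of a strictly positive element of $I$---is the right idea, but your assertion that the resulting $\tilde y_n$ still satisfies $x \le m\tilde y_n$ ``because $x \in \Cu(I)$'' is precisely the nontrivial point and does not follow automatically. (Incidentally, every element of $\Cu(A)$ \emph{is} of the form $\langle a\rangle$ for some positive $a\in A\otimes\cK$, so your parenthetical worry about $x$ is unfounded.)

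The paper carries out this step concretely at the level of elements. Writing $x = \langle a\rangle$ and $y_n = \langle b_n\rangle$, one has $t_n^*(b_n\otimes 1_m)t_n = (a-\ep/3n)_+$ for suitable $t_n$. Using a quasi-central approximate unit $\{e_k\}$ for $I$, the paper sets $c_n = e_k b_n e_k \in I$ for large enough $k$, obtaining $c_n \precsim b_n$ but only the slightly weaker relation $(a-2\ep/3n)_+ \precsim c_n\otimes 1_m$, not $a\precsim c_n\otimes 1_m$. This is why the Corona Factorization Property for $\Cu(I)$ is then applied to the non-constant full sequence $\{\langle (a-2\ep/3n)_+\rangle\}$ rather than to your constant sequence $\{x\}$. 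The approximate-unit maneuver, together with the accompanying $\ep$-bookkeeping, is where the real work lies; your proposal correctly names the obstacle but does not actually overcome it.
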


\begin{proof} Assume that $\Cu(A)$ has the strong Corona Factorization
  Property, and let $I$ be a closed two-sided ideal in $A$. Then
  $\Cu(I)$ is an ideal in $\Cu(A)$. As the strong Corona Factorization
  Property trivially passes to ideals, we conclude that $\Cu(I)$
  satisfies the (strong) Corona
  Factorization Property. It therefore follows from
  Theorem~\ref{cuntz_CFP} that $I$ has the Corona Factorization Property (for
  \Cs s).

Suppose now that all ideals in $A$ have the Corona Factorization
Property. To show that $\Cu(A)$ has the strong
Corona Factorization Property, it suffices to show that whenever
$a,b_1,b_2, \dots$ are positive elements in $A\otimes \cK$, $\ep >0$,
and $m$ is a positive integer such that $a \precsim b_n \otimes 1_m$,
then $(a-\ep)_+ \precsim b_1 \oplus b_2 \oplus \cdots \oplus b_k$ for
some positive integer $k$.  There are
elements $t_n \in A\otimes\cK$ such that $t_n^*(b_n
\otimes 1_m)t_n=(a-\ep/3n)_+$.

Let $I$ be the closed two-sided ideal in $A\otimes \cK$ generated by $a$. Then
$a$ is full in $I$, and $\langle a \rangle$ is full in $\Cu(I)$; however,
the elements $b_n$ may not belong to $I$. To fix this problem, take a quasi-central
increasing approximate unit $\{e_k\}_{k=1}^\infty$ for $I$ consisting of
positive contractions. For each $n$ find $k$ such that
$$\|t_n^*(e_k \otimes 1_{m}) (b_n \otimes 1_m) (e_k \otimes
1_{m})t_n  - (a-\ep/3n)_+\| < \ep/3n,$$
and put $a_n = (a-2\ep/3n)_+$ and
$c_n = e_k b_n e_k$.
Then $c_n$ belongs to $I$, $c_n \precsim b_n$,
$a_n \precsim c_n \otimes
1_m$ (relatively to $A$, and hence also relatively to $I$), $(a-\ep)_+ =
(a_1-\ep/3)_+$,
and $\{\langle a_n \rangle \}$ is a full sequence in
$\Cu(I)$. Since $\Cu(I)$ is assumed to satisfy the Corona Factorization
Property we conclude that
$$(a-\ep)_+ \precsim c_1 \oplus c_2 \oplus \cdots \oplus c_k \precsim
b_1 \oplus b_2 \oplus \cdots \oplus b_k$$
for some $k$ as desired.
\end{proof}

\section*{Acknowledgements}

The first and second named authors were partially supported by a
MEC-DGESIC grant (Spain) through Project MTM2008-0621-C02-01/MTM,
and by the Comissionat per Universitats i Recerca de la Generalitat
de Catalunya. The third named author was supported by a grant from
the Danish Natural Science Research Council (FNU). Part of this
research was carried out during visits of the first and third named
authors to UAB (Barcelona), of the second named author to SDU
(Odense), and of the two first mentioned authors to Copenhagen. We
wish to thank all parties involved for the hospitality extended to
us.

\end{document}